
\documentclass[reqno,12pt]{amsart}
\usepackage{amsmath,amssymb,latexsym,soul,cite,mathrsfs}
\usepackage{color,enumitem,graphicx}
\usepackage[colorlinks=true,urlcolor=blue,
citecolor=red,linkcolor=blue,linktocpage,pdfpagelabels,
bookmarksnumbered,bookmarksopen]{hyperref}
\usepackage[english]{babel}
\usepackage[left=2.6cm,right=2.6cm,top=2.9cm,bottom=2.9cm]{geometry}
\usepackage[hyperpageref]{backref}

\usepackage{import}


\pretolerance=10000
\numberwithin{equation}{section}

\newtheorem{theorem}{Theorem}[section]
\newtheorem{lemma}[theorem]{Lemma}

\newtheorem{proposition}[theorem]{Proposition}
\newtheorem{remark}[theorem]{Remark}
\newtheorem{definition}[theorem]{Definition}

\newtheoremstyle{tttheorem}
{}                
{}                
{\slshape}        
{}                
{\bfseries}       
{'}               
{ }               
{}                
\theoremstyle{tttheorem}

\def\XXint#1#2#3{{\setbox0=\hbox{$#1{#2#3}{\int}$ }
		\vcenter{\hbox{$#2#3$ }}\kern-.6\wd0}}


\newcommand{\Ss}{\mathbf{S}}
\newcommand{\R}{\mathbf{R}}

\newcommand{\B}{\mathbf{B}}

\DeclareMathOperator{\Ric}{Ric}

\begin{document}
\title[An End to End Gluing Construction for $Q$-curvature]{An End to End Gluing Construction for Metrics of Constant $Q$-Curvature }

\author[A. S. Aiken]{A. Sophie Aiken}
\author[R. Caju]{Rayssa Caju}
\author[J. Ratzkin]{Jesse Ratzkin}
\author[A. Silva Santos]{Almir Silva Santos}

\address[A. S. Aiken]{Department of Mathematics, University of California, Santa Cruz
\newline\indent 
  1156 High St., Santa Cruz, CA 95064, USA}
\email{\href{mailto:ausaiken@ucsc.edu}{ausaiken@ucsc.edu}}

\address[R. Caju]{Department of Mathematical Engineering and Center for Mathematical Modeling (CNRS IRL 2807), University of Chile
\newline\indent 
    Beauchef 851, Edificio Norte, Santiago, Chile}
\email{\href{mailto:rayssacaju@gmail.com}{rcaju@dim.uchile.cl}}

\address[J. Ratzkin]{Department of Mathematics,
	Universit\"{a}t W\"{u}rzburg
	\newline\indent
	Emil-Fischer-Str. 40, 97070, W\"{u}rzburg-BA, Germany}
\email{\href{mailto:jesse.ratzkin@uni-wuerzburg.de}{jesse.ratzkin@uni-wuerzburg.de}}

\address[A. Silva Santos]{Department of Mathematics, 
	Federal University of Sergipe
	\newline\indent 
	49107-230, Sao Cristov\~ao-SE, Brazil}
\email{\href{mailto: almir@mat.ufs.br}{ almir@mat.ufs.br}}

\subjclass[2020]{53C18, 58D17, 58D27}
\keywords{Paneitz--Branson operator, $Q$-curvature, gluing construction}

\begin{abstract}
We produce many new complete, constant $Q$-curvature metrics on finitely punctured spheres 
by gluing together known examples. In our construction, we truncate one end of each summand and 
glue the two summands together ``end-to-end," where we've truncated them. We use this gluing 
construction to prove that the (unmarked) moduli space of solutions with $k$ punctures 
is topologically nontrivial provided $k \geq 4$. 
\end{abstract}

\maketitle

\tableofcontents

\section{Introduction} 

In the last 40 years many people have intensely studied singular 
curvature problems in Riemannian manifolds, particularly in the 
construction of constant curvature metrics with singularities on compact manifolds. 
In particular, the theory of constant scalar 
curvature metrics with isolated singularities is by now well-developed
and we give a (non-exhaustive) survey of the literature here. 
Caffarelli, Gidas and Spruck \cite{CGS} derived the asymptotic 
stucture of a singular point and later Korevaar, Mazzeo, Pacard and 
Schoen refined these asymptotics. Pollack \cite{Pol} proved that, under 
appropriate geometric conditions, the space of solutions is 
pre-compact and then Mazzeo, Pollack and Uhlenbeck investigated the 
local structure of the moduli space of solutions. Many people have 
constructed solutions using gluing methods; we particularly mention Schoen's construction 
\cite{Schoen1} and the construction of Mazzeo and Pacard \cite{MR1712628} here. 
In the present paper we explore this 
program for Branson's $Q$-curvature, which is a fourth-order generalization 
of scalar curvature. In particular, we construct many new and interesting 
examples of complete, constant $Q$-curvature metrics on finitely 
punctured spheres by gluing together known examples. As an 
application, we use this gluing construction to prove that the moduli 
space of solutions with exactly $k$ punctures is noncontractible, so long 
as $k \geq 4$. 

Let $(M,g)$ be a Riemannian manifold of dimension 
$n \geq 5$. The (fourth-order) $Q$-curvature of $g$ is defined as
\begin{equation*} \label{q_defn}
Q_g = -\frac{1}{2(n-1)} \Delta_g R_g - \frac{2}{(n-2)^2} |\Ric_g|^2 
+ \frac{n^3-4n^2 +16n-16}{8(n-1)^2(n-2)^2} R_g^2,
\end{equation*}
where $R_g$ and $\Ric_g$ are the scalar and Ricci curvatures and 
$\Delta_g$ is the Laplace-Beltrami operator. It is well-known that if 
$\widetilde g = u^{\frac{4}{n-4}} g$, for 
some positive function $u\in C^\infty(M)$, is a conformal metric to $g$, the $Q$-curvature transforms 
according to the rule 
    \begin{equation}\label{transformationlaw} 
        Q_{\widetilde g} = \frac{2}{n-4} u^{- \frac{n+4}{n-4}} P_gu, 
    \end{equation} 
    where the Paneitz operator $P_g$ is given by
    \begin{equation}\label{paneitz-branson} 
        P_gu=\Delta_g^2u + \operatorname{div}_g \left (\frac{4}{n-2} 
        \operatorname{Ric}_g (\nabla u, \cdot) -
        \frac{(n-2)^2 + 4}{2(n-1)(n-2)} R_g \nabla u \right )+\frac{n-4}{2} Q_g u.
    \end{equation} 
    The Paneitz operator transforms according to the law 
    \begin{equation} \label{transformationlaw2} 
    \widetilde{g} = u^{\frac{4}{n-4}} g \Rightarrow P_{\widetilde{g}} (\phi) = u^{-\frac{n+4}{n-4}}
    P_g(u\phi),
    \end{equation} 
    where $\phi$ is any smooth function. Observe that we recover \eqref{transformationlaw} 
    by plugging $\phi=1$ into \eqref{transformationlaw2}. 
    
After a short computation one sees that 
$$Q_{\overset{\circ}{g}} = \frac{n(n^2-4)}{8},$$ 
where $\overset{\circ}{g}$ is the usual round metric on the sphere $\Ss^n$. Using \eqref{transformationlaw} 
we see that $Q_{\widetilde g} = \frac{n(n^2-4)}{8}$ if and only if the conformal factor $u:M \rightarrow (0,\infty)$ 
satisfies the equation 
\begin{equation} \label{eq020} 
0 = \mathcal{N}_g (u) :=  P_g u - \frac{n(n-4)(n^2-4)}{16} u^{\frac{n+4}{n-4}}.
\end{equation} 
This is a fourth-order, elliptic PDE with critical Sobolev growth. 
Linearizing about the constant function $1$, we obtain the operator 
\begin{equation}\label{eq010}
    L_g(u)=P_gv-\frac{n(n+4)(n^2-4)}{16}v.
\end{equation}

We are particularly interested in the case that the background metric is 
the round metric on (a subdomain of) the sphere. In this case, the Paneitz 
operator factors as 
$$P_{\overset{\circ}{g}} = \left ( -\Delta_{\overset{\circ}{g}} + \frac{(n-4)(n+2)}{4} 
\right ) \circ \left ( -\Delta_{\overset{\circ}{g}} + \frac{n(n-2)}{4} \right ).$$
A theorem of C. S. Lin \cite{MR1611691} states that any global solution to the PDE 
$\mathcal{N}_{\overset{\circ}{g}} (U) = 0$ must arise from a M\"obius 
transformation. We can use stereographic projection to rephrase this 
theorem on Euclidean space. If $\Pi:\Ss^n \backslash \{ N\} \rightarrow \R^n$ is 
the standard stereographic projection then 
$$(\Pi^{-1})^* (\overset{\circ}{g}) = \frac{4}{(1+|x|^2)^2} g_{\text{euc}} =  
u_{\rm sph} ^{\frac{4}{n-4}} g_{\rm{euc}}, 
\qquad u_{\rm sph}(x) = \left ( \frac{1+|x|^2}{2} \right )^{\frac{4-n}{2}}.$$
We can now write the 
conformal metric $\widetilde g = U^{\frac{4}{n-4}} \overset{\circ}{g} = 
u^{\frac{4}{n-4}} g_{\rm euc}$, where $g_{\rm euc}$ is the standard Euclidean metric on $\R^n$ and 
$u = (U\circ \Pi^{-1}) u_{\rm sph}$. With 
respect to this Euclidean gauge, the conformally flat metric $\widetilde g = 
u^{\frac{4}{n-4}}g_{\rm euc}$ has constant $Q$-curvature if and only if 
\begin{equation} \label{flat_eq020} 
0 = \mathcal{N}_{g_{\rm euc}} (u) = \Delta^2 u -  \frac{n(n-4)(n^2-4)}{16} u^{\frac{n+4}{n-4}},
\end{equation} 
where $\Delta$ is the Laplace operator with respect to the standard flat metric $g_{\rm euc}.$

\begin{remark}\label{gauge_remark} We will often find it useful to transfer between the spherical gauge, 
writing $\widetilde{g} = U^{\frac{4}{n-4}} \overset{\circ}{g}$, and the Euclidean 
gauge, where $\widetilde{g} = u^{\frac{4}{n-4}} g_{\rm euc}$ without comment. Here and below 
we adopt the convention that conformal factors in the spherical gauge have captital letters 
and conformal factors in the Euclidean gauge have lower-case letters. The two are always 
related by $u = (U \circ \Pi^{-1})u_{\rm sph}$. Later on we will introduce a third useful gauge
near an isolated singular point, which we call the cylindrical gauge and is 
induced by the Emden-Fowler change of coordinates (see \eqref{emden_fowler_coords}). In this 
case the background metric is $g_{\rm cyl} = dt^2 + d\theta^2$, where $d\theta^2$ is the 
usual round metric on $\Ss^{n-1}$. Thereafter we will transfer between all three gauges, 
with the choice of background metric being clear from the context. 
\end{remark}

\noindent Lin \cite{MR1611691} proved that $u:\R^n \rightarrow (0,\infty)$ is a global 
solution of \eqref{flat_eq020} if and only if
$$ u(x) = u_{\lambda,x_0}(x) = \left (\frac{1+\lambda^2|x-x_0|^2}{2\lambda}\right)^{\frac{4-n}{2}}.$$ 
for some $\lambda >0$ and $x_0 \in \R^n$. Observe that 
when $\lambda \nearrow \infty$ the function $u_{\lambda,x_0}$ converges uniformly to zero outside 
any fixed neighborhood of $x_0$. This blow-up behavior motivates our examination of the 
following singular prescribed curvature problem: given a closed subset $\Lambda \subset \Ss^n$, 
find a conformal metric $g = U^{\frac{4}{n-4}} 
\overset{\circ}{g}$ with $\displaystyle Q_g = \frac{n(n^2-4)}{8}$ that is 
complete on $\Ss^n \backslash \Lambda$. The preceding paragraphs show that 
finding such a metric is equivalent to solving the boundary value 
problem 
\begin{equation} \label{sing_yam_bvp} 
U:\Ss^n \backslash \Lambda \rightarrow (0,\infty), \quad 
\mathcal{N}_{\overset{\circ}{g}} (U) = 0, \quad \liminf_{q \rightarrow \Lambda} 
U(q) = \infty.
\end{equation} 
Once again, we can use (inverse) stereographic projection to transfer this 
problem over to Euclidean space, transforming \eqref{sing_yam_bvp} 
into 
\begin{equation} \label{sing_yam_bvp2} 
u: \R^n \backslash \Gamma \rightarrow (0,\infty), \quad \mathcal{N}_{g_{\rm euc}}(u) 
= 0, \quad \liminf_{x \rightarrow \Gamma} u(x) = \infty, \quad \limsup_{|x| \rightarrow \infty} 
u(x) <\infty, 
\end{equation} 
where $\Gamma = \Pi(\Lambda)$ and (as before) $u = (U \circ \Pi^{-1})u_{\rm sph}$. We will 
refer to the metrics associated with solutions of \eqref{sing_yam_bvp} (equivalently, 
of \eqref{sing_yam_bvp2}) as singular Yamabe metrics

Our goal in the present manuscript is to construct new and interesting 
examples of singular Yamabe metrics where the singular set is finite by gluing 
together two known solutions. More specifically, we let $\Lambda_1 = \{ q_0^1, q_1^1, \dots, 
q_{k_1}^1\}$ and $\Lambda_2 = \{ q_0^2, q_1^2, \dots, q_{k_2}^2\}$ be two finite sets 
of $\Ss^n$ with $k_1+1$ and $k_2+1$ distinct points, respectively. We allow $q_j^1 = q_\ell^2$ 
for some pair of indices $j$ and $\ell$, but require $q_j^i \neq q_\ell^i$ for 
$i=1,2$ and $j \neq \ell$. Next we let $(M_1, g_1) = (\Ss^n \backslash \Lambda_1,  
U_1^{\frac{4}{n-4}} \overset{\circ}{g})$ and $(M_2, g_2) = (\Ss^n \backslash \Lambda_2, 
U_2^{\frac{4}{n-4}}\overset{\circ}{g})$ be two singular Yamabe metrics. In other 
words, 
\begin{equation} \label{summand_bvp} 
U_i : \Ss^n \backslash \Lambda_i \rightarrow (0,\infty), \quad 
\mathcal{N}_{\overset{\circ}{g}} (U_i) = 0, \quad \liminf_{x \rightarrow \Lambda_i} 
U_i(x) = \infty
\end{equation} 
for each $i=1,2$. Furthermore, we assume the asymptotic necksize (which we define below)
of $g_1$ at $q_0^1$ is 
the same as that of $g_2$ at $q_0^2$. We connect $(M_1,g_1)$ to $(M_2,g_2)$ by identifying 
two annuli about $q_0^1$ and $q_0^2$. We sketch an example with $k_1=3$ and $k_2=2$ in the 
cylindrical gauge. 

\begin{figure}[!ht]
\begin{center}
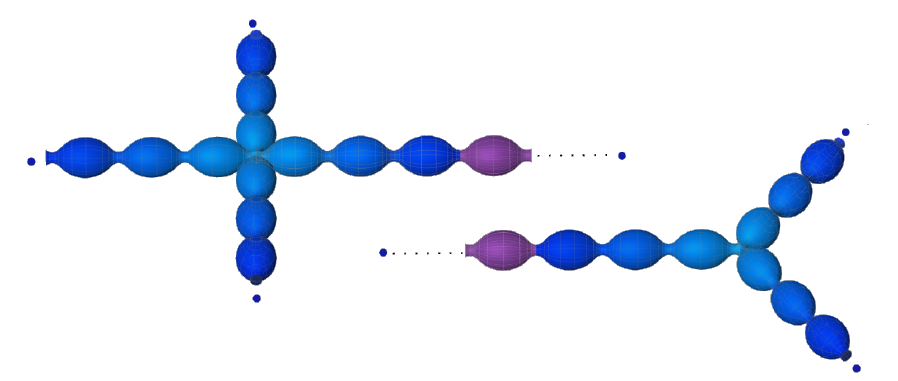
\caption{The two annuli we identify in $M_1$ and $M_2$.}\label{fig-approximate-solution}
\end{center}
\end{figure}

With this identification we 
obtain a new Riemannian manifold $(M,g)$ on $\Ss^n$ with $k_1+k_2$ punctures. This 
new manifold, which we sketch in Figure \ref{fig-approximate-solution-2}, has 
constant $Q$-curvature outside of a small annulus and the difference between 
$Q_g$ and $\displaystyle \frac{n(n^2-4)}{8}$ is small. Our goal is now 
to perturb $(M,g)$ to obtain a new metric with $k_1+k_2$ isolated singular points 
satisfying $\displaystyle Q_g = \frac{n(n^2-4)}{8}$. 
\begin{figure}[!ht]
\begin{center}
\begingroup%
  \makeatletter%
  \providecommand\color[2][]{%
    \errmessage{(Inkscape) Color is used for the text in Inkscape, but the package 'color.sty' is not loaded}%
    \renewcommand\color[2][]{}%
  }%
  \providecommand\transparent[1]{%
    \errmessage{(Inkscape) Transparency is used (non-zero) for the text in Inkscape, but the package 'transparent.sty' is not loaded}%
    \renewcommand\transparent[1]{}%
  }%
  \providecommand\rotatebox[2]{#2}%
  \newcommand*\fsize{\dimexpr\f@size pt\relax}%
  \newcommand*\lineheight[1]{\fontsize{\fsize}{#1\fsize}\selectfont}%
  \ifx\svgwidth\undefined%
    \setlength{\unitlength}{295.35160095bp}%
    \ifx\svgscale\undefined%
      \relax%
    \else%
      \setlength{\unitlength}{\unitlength * \real{\svgscale}}%
    \fi%
  \else%
    \setlength{\unitlength}{\svgwidth}%
  \fi%
  \global\let\svgwidth\undefined%
  \global\let\svgscale\undefined%
  \makeatother%
  \begin{picture}(1,0.38351822)%
    \lineheight{1}%
    \setlength\tabcolsep{0pt}%
    \put(0,0){\includegraphics[width=\unitlength,page=1]{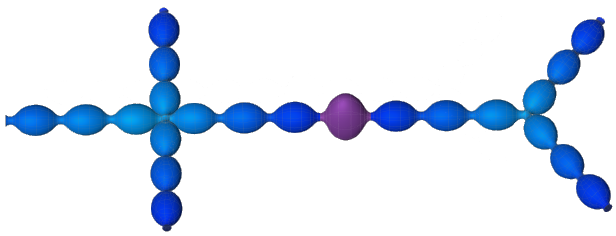}}%
    \put(0.56975542,0.09710011){\color[rgb]{0,0,0}\makebox(0,0)[t]{\lineheight{1.25}\smash{\begin{tabular}[t]{c}The gluing region\\$Q_{g_m}\not=\frac{n(n^2-4)}{8}$\end{tabular}}}}%
  \end{picture}%
\endgroup%

\caption{The approximate solution $(M,g)$ with 
 $k_1+k_2$ punctures.}\label{fig-approximate-solution-2}
\end{center}
\end{figure}

We require the summands $(M_i, g_i)$ to satisfy some hypotheses, which we 
describe in this paragraph, so 
that we can perturb the approximate solution to a constant $Q$-curvature 
metric. A result of Jin and 
Xiong \cite{JX} states that each of these metrics are asymptiotically radially 
symmetric near each of its punctures. Frank and K\"onig \cite{MR3869387} classified these 
radially symmetric solutions, showing that (after a change of variables 
we describe in Section \ref{delaunay_metrics}) they are all periodic, which allows us to 
assign an asymptotic necksize to each puncture. As we describe above, 
finding one of these singular Yamabe metrics is equivalent to solving a 
nonlinear PDE. As is usually the case, a key step in constructing a solution 
is to understand the mapping properties of the linearized operator. Let 
$g = u^{\frac{4}{n-4}} g_{\rm euc}$ be a complete, conformally flat metric on 
$\R^n \backslash \Gamma$ and write a nearby metric as 
$$g_v = \left ( 1+ \frac{v}{u} \right )^{\frac{4}{n-4}} g = (u+v)^{\frac{4}{n-4}}
g_{\rm euc}. $$
The condition that $g_v$ has $Q$-curvature equal to $\displaystyle{\frac{n(n^2-4)}{8}}$
is equivalent to the PDE
$$0 = \mathcal{N}_{g_{\rm euc}} (u+v) = \Delta^2 (u+v) - \frac{n(n-4)(n^2-4)}{16}
(u+v)^{\frac{n+4}{n-4}}. $$
The linearization of this operator about $u$, which one often calls the Jacobi 
operator, is 
\begin{equation}\label{eq011}
    {L}_u (v) = \Delta^2 v- \frac{n(n+4)(n^2-4)}{16} u^{\frac{8}{n-4}}v,
\end{equation}
and we call the metric $g=u^{\frac{4}{n-4}} g_{\rm euc}$ unmarked nondegenerate if 
${L}_u : C^{4,\alpha}_{\delta} (M,g) \rightarrow C^{0,\alpha}_{\delta} (M,g)$ 
is injective for all $\delta < -1$. One can equivalently phrase the nondegeneracy 
condition in terms of weighted Sobolev spaces, as in \cite{MPU} and \cite{cjs2024}.

\begin{theorem} \label{main_thm}
For $i=1,2$ consider $M_i=\Ss^n\backslash\{q_0^i,\ldots,q_{k_i}^i\}$, with $k_i\geq 2$. 
Let $(M_1, g_1)$ and $(M_2, g_2)$ be complete, constant $Q$-curvature metrics. We also 
assume the summands 
$(M_i, g_i)$ are both unmarked nondegenerate, that the asymptotic necksizes at 
$q_0^1$ and $q_0^2$ agree and that there exists a one-parameter family of constant 
$Q$-curvature metrics through $(M_1, g_1)$ that changes the necksize at $q_0^1$ to 
first order.  
There exists an $r_0>0$ such that for each $0 < r < r_0$ the gluing construction 
described above with annuli $\B_r(q _0^i)\backslash \B_{r/2}(q_0^i)$ for 
$i=1,2$ yields a complete constant $Q$-curvature metric with $k_1+ k_2$ punctures 
on $\Ss^n$. The metrics we produce in this gluing construction are all unmarked nondegenerate. 
\end{theorem}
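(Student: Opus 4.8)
The plan is the by-now-standard three-movement gluing scheme --- build an approximate solution, develop a linear theory with bounds controlled through the degeneration, and close up with a Lyapunov--Schmidt reduction in which finitely many geometric parameters absorb the cokernel supported near the neck --- followed by a verification that the glued metric is again unmarked nondegenerate. \emph{Approximate solution.} Near $q_0^1$ and $q_0^2$ I pass to the cylindrical gauge through the Emden--Fowler coordinates, so that each summand is, in a punctured neighborhood of $q_0^i$, a small perturbation of a Delaunay-type solution on a half-cylinder $[T_\ast,\infty)\times\Ss^{n-1}$; this uses the asymptotic radial symmetry of Jin--Xiong, the Frank--K\"onig classification of the radial (periodic) solutions, and the hypothesis that the asymptotic necksizes at $q_0^1$ and $q_0^2$ coincide, so that the two models are the \emph{same} periodic solution $v_\varepsilon$ up to an axial translation $t\mapsto t+\tau$ and a fixed identification of the $\Ss^{n-1}$ factors. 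Cutting the summands at cylindrical parameter $T\asymp|\log r|$ (corresponding to $\B_r(q_0^i)\setminus\B_{r/2}(q_0^i)$) leaves a long Delaunay stretch of length $\asymp 2T$ between the two cores; I fix a cutoff supported near its center and define the approximate conformal factor $\bar u_r$ on the resulting $(k_1+k_2)$-punctured sphere $M$ by interpolating the two summand conformal factors across that center. Because each summand solution is exponentially close to $v_\varepsilon$ at cylindrical distance $\asymp T$ from its puncture, a direct computation gives that $\mathcal N_{g_{\rm euc}}(\bar u_r)$ is supported in the gluing region and $\|\mathcal N_{g_{\rm euc}}(\bar u_r)\|_{C^{0,\alpha}_{\delta}}\le C r^{\mu}$ for some $\mu>0$.

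\emph{Linear theory --- the main obstacle.} The crux is to produce, for the Jacobi operator $L_{\bar u_r}$ on $(M,g_r)$, a right inverse $\mathcal G_r$ with norm between the relevant weighted H\"older spaces bounded uniformly in $r$ (or growing at most polynomially in $T$). On each summand, unmarked nondegeneracy --- $L_{u_i}$ injective on $C^{4,\alpha}_{\delta}$ for all $\delta<-1$ --- together with the Fredholm theory for asymptotically Delaunay operators yields a right inverse with $r$-independent norm, no obstruction arising from the retained punctures when $\delta<-1$. On the neck the model operator is the linearized Delaunay operator; separating over $\Ss^{n-1}$, its spherically symmetric part is a fourth-order ODE with periodic coefficients whose only bounded or linearly-growing solutions are $\partial_t v_\varepsilon$ (axial translation) and $\partial_\varepsilon v_\varepsilon$ (necksize change, which grows linearly because the period depends on $\varepsilon$), the remaining modes having nonzero indicial roots and hence not obstructing. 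Letting $W$ be the span of fixed cutoffs of these two model Jacobi fields, I glue the summand inverses to a model parametrix on the neck via a partition of unity subordinate to the cover of $M$ by the two cores and the neck, obtaining $\mathcal G_r:C^{0,\alpha}_{\delta}(M,g_r)\to C^{4,\alpha}_{\delta}(M,g_r)\oplus W$ with $L_{\bar u_r}\circ\mathcal G_r=\mathrm{Id}+E_r$, the error $E_r$ being small because the overlaps lie deep inside the neck where $\bar u_r$ is exponentially close to $v_\varepsilon$; a Neumann series upgrades this to a genuine right inverse. The delicate point, and the reason $W$ is carried along rather than absorbed, is precisely to neutralize the small eigenvalue of $L_{\bar u_r}$ attached to the linearly-growing field $\partial_\varepsilon v_\varepsilon$ on the long neck.

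\emph{Nonlinear reduction.} Writing $u=\bar u_r+\psi$, the equation $\mathcal N_{g_{\rm euc}}(\bar u_r+\psi)=0$ becomes $L_{\bar u_r}\psi=-\mathcal N_{g_{\rm euc}}(\bar u_r)-\mathcal Q_{\bar u_r}(\psi)$, with the superquadratic remainder satisfying $\|\mathcal Q_{\bar u_r}(\psi_1)-\mathcal Q_{\bar u_r}(\psi_2)\|\le C(\|\psi_1\|+\|\psi_2\|)\|\psi_1-\psi_2\|$ on a small ball. Composing with $\mathcal G_r$ splits the problem into: (i) the fixed-point equation $\psi=-\mathcal G_r\big(\mathcal N_{g_{\rm euc}}(\bar u_r)+\mathcal Q_{\bar u_r}(\psi)\big)$, solved by the contraction mapping principle in a ball of radius $\asymp r^{\mu}$ in $C^{4,\alpha}_{\delta}(M,g_r)\oplus W$ and producing $\psi=\psi_r(\tau,s)$ depending smoothly on the axial-translation parameter $\tau$ and on the parameter $s$ of the assumed one-parameter family of constant $Q$-curvature metrics through $(M_1,g_1)$ that changes the necksize at $q_0^1$ to first order; and (ii) the finite-dimensional balancing system $\Pi_W\,\mathcal N_{g_{\rm euc}}\big(\bar u_r+\psi_r(\tau,s)\big)=0$. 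By construction the $\partial_t v_\varepsilon$-component of this system is steered by $\partial_\tau$ and the $\partial_\varepsilon v_\varepsilon$-component by $\partial_s$ --- the latter exactly because varying $s$ moves the necksize at first order --- so its linearization is invertible for $r$ small and the implicit function theorem (or a degree argument) yields $(\tau(r),s(r))$ solving (ii). The resulting $u>0$ satisfies $\mathcal N_{g_{\rm euc}}(u)=0$, so $g=u^{4/(n-4)}g_{\rm euc}$ has constant $Q$-curvature, and it is complete with exactly $k_1+k_2$ punctures since it agrees with the two complete summands away from the now-compact neck.

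\emph{Nondegeneracy of the glued metric, and the hard part.} Finally, were $L_u$ to admit a nontrivial kernel element on $C^{4,\alpha}_{\delta}(M,g)$ for some $\delta<-1$, then cutting the neck and feeding the pieces into the uniform right inverse of the second step would force that Jacobi field either to restrict to a nontrivial decaying Jacobi field on one of the summands --- contradicting its unmarked nondegeneracy --- or to concentrate on the neck, which the model analysis forbids for $r$ small; hence $L_u$ is injective for all $\delta<-1$ and the glued metric is unmarked nondegenerate. I expect the linear step to be the genuine difficulty: constructing a right inverse for $L_{\bar u_r}$ with bounds controlled uniformly through the degeneration, across a neck that carries the slowly-growing Delaunay Jacobi field $\partial_\varepsilon v_\varepsilon$.
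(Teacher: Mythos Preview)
Your overall architecture is right, but the linear step has a genuine gap that propagates through the rest of the argument.

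\textbf{The gap.} You claim that unmarked nondegeneracy of $(M_i,g_i)$ --- injectivity of $L_{u_i}$ on $C^{4,\alpha}_\delta$ for $\delta<-1$ --- ``yields a right inverse with $r$-independent norm, no obstruction arising from the retained punctures when $\delta<-1$.'' This is false: injectivity in decaying weights gives, by self-adjointness and the relative index formula, surjectivity only in the dual \emph{growing} weights. In $C^{0,\alpha}_{-\delta}$ with $\delta>1$ the operator $L_{g_i}$ has a cokernel of dimension $(k_i+1)(n+1)$, one copy of the low-mode Delaunay Jacobi fields $v_\varepsilon^{l,\pm}\phi_l$ ($l=0,\dots,n$) at \emph{every} end, including the retained ones. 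So there is no summand right inverse landing in $C^{4,\alpha}_\delta$ alone, and your two-dimensional neck space $W=\mathrm{span}\{\partial_t v_\varepsilon,\partial_\varepsilon v_\varepsilon\}$ together with the two parameters $(\tau,s)$ cannot absorb this cokernel.

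\textbf{How the paper handles it.} The paper enlarges the target of the right inverse to $\mathcal W_{g_m}\oplus C^{4,\alpha}_{-\delta}(M)$, where $\mathcal W_{g_m}$ is the full deficiency space at the $k_1+k_2$ \emph{retained} punctures (modes $l=0,\dots,n$, both $\pm$). The glued ends $q_0^1,q_0^2$ are handled differently: on each summand the right inverse produces a deficiency contribution at $q_0^i$, and the paper cancels it by adding an element $\Phi_i$ of the bounded null space $B_{g_1}$ whose asymptotic at $q_0^1$ matches the prescribed deficiency (Lemma~4.4). This is precisely where the hypothesized one-parameter family changing the necksize at $q_0^1$ enters --- it is used in the \emph{linear} step to guarantee that $B_{g_1}$ is rich enough to hit any deficiency asymptotic at $q_0^1$, not (as in your scheme) as a balancing parameter in a Lyapunov--Schmidt reduction. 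With the neck thus matched, the paper's right inverse is built by patching the two summand inverses directly, with no separate neck parametrix and no finite-dimensional space carried on the neck. The nonlinear step is then a single contraction on $\mathcal W_{g_m}\oplus C^{4,\alpha}_{-\delta}(M)$: the $\mathcal W_{g_m}$-component $w$ is interpreted as a geometric deformation $g_m(w)$ of the approximate metric (adjusting necksize, axial translation, and both translation parameters at each retained end), and one checks that the linearization of $(w,v)\mapsto\mathcal N_{g_m(w)}(1+v)$ at the origin is $L_{g_m}$ on $w+v$, so no separate balancing step is needed.

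\textbf{Nondegeneracy.} Your sketch here is also too thin. The paper's argument is a blow-up by contradiction split into five cases according to where the supremum of $\rho_k^{-1}|u_k|$ concentrates; the delicate case is when the limit produces a Jacobi field on $(M_2,g_2)$ asymptotic to $v_\varepsilon^{0,+}$ at $q_0^2$ (which nondegeneracy alone does not exclude), and one needs an additional integration-by-parts argument on the long neck against $v_\varepsilon^{0,-}$ to reach a contradiction.
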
 

\begin{remark} 
    \begin{itemize} 
    \item The metrics constructed in \cite{del-ends2} are all unmarked nondegenerate 
    and belong to one-parameter families changing all necksizes, and so they all 
    satisfy the hypotheses of Theorem \ref{main_thm}.
    \item The term ``unmarked nondegenerate" refers to the nondegeneracy of the 
    constant $Q$-curvature metric in the unmarked moduli space $\mathcal{M}_k$
    of solutions with $k$ punctures. There is also a marked moduli space 
    $\mathcal{M}_\Lambda$ of solutions, each of whose singular set is $\Lambda$, 
    where $\Lambda$ is a specific $k$-tuple of points in $\Ss^n$. We equip both moduli spaces
    with the Gromov-Hausdorff topology. The difference 
    between families of metric in $\mathcal{M}_k$ and $\mathcal{M}_\Lambda$ is 
    that the position of the punctures remain fixed in a family of metrics in 
    $\mathcal{M}_\Lambda$, whereas the punctures are allowed to move in a family 
    of metrics in $\mathcal{M}_k$. 
    \item We observe that our gluing construction does 
    not {\it a priori} require small necksizes, as is often the case with 
    gluing constructions. On the other hand, the metrics we construct are 
    ``near the boundary of moduli space" in a different sense, in that the Riemannian 
    manifold we produce contains an annulus that is conformal to a long cylinder. In 
    this way, the conformal type of the metrics we produce is still near a 
    singular limit.
    \end{itemize}
\end{remark}

As is usually the case in a gluing construction, the key step is to 
construct a right-inverse for the linearized operator ${L}_u$. We can see the 
importance of this right inverse by writing out a formal Taylor series for 
the operator $\mathcal{N}$. Using the notation in the paragraph above, we 
see that we want to solve the PDE
\begin{equation} \label{goal_eqn1}
0 = \mathcal{N}_g \left ( 1+\frac{v}{u} \right ) = u^{-\frac{n+4}{n-4}}
\mathcal{N}_{g_{\rm euc}}(u+v) = u^{-\frac{n+4}{n-4}} (-\psi + {L}_u (v) + \mathcal{R}_u(v)),
\end{equation} 
where $\psi:=\mathcal{N}_{g_{\rm euc}}(u)$,  ${L}_u$ is the Jacobi operator discussed above,  
and $\mathcal{R}_u$ is the remainder term. We can rearrange \eqref{goal_eqn1} to 
read 
\begin{equation} \label{goal_eqn2} 
{L}_u(v) = -\mathcal{N}_{g_{\rm euc}} (u) - \mathcal{R}_u(v) = \psi - 
\mathcal{R}_u(v).
\end{equation} 
If ${G}_u$ is a right inverse of ${L}_u$, 
we can complete our construction by showing the map 
\begin{equation} \label{goal_eqn3} 
\mathcal{K}(v) := {G}_u(\psi-\mathcal{R}_u(v))
\end{equation} 
is a contraction under appropriate conditions. Indeed, if $v$ is a fixed 
point of $\mathcal{K}$ then applying ${L}_u$ on the left-hand side 
of \eqref{goal_eqn3} we obtain 
$${L}_u(v) = \psi - \mathcal{R}_u(v)$$
as desired. 

We complete the proof of Theorem \ref{main_thm} as follows. In 
Section \ref{sec:prelim} we recall some relevant, previous work and 
do some preliminary analysis. In Section \ref{approx_sltn} we 
construct an approximate solution and prove it has constant $Q$-curvature 
outside a compact set, and the $Q$-curvature is close to constant on 
this set. In Section \ref{sec:lin_anal} we perform linear analysis, 
particularly discussing the Jacobi operator of a Delaunay solution in 
Section \ref{sec-jacobi-operator} and constructing the right-inverse of the 
Jacobi operator of the approximate solution in Section \ref{sec:right_inverse}. 
In Section \ref{sec:nonlinear} we complete the gluing 
construction by proving the map $\mathcal{K}_u$ is a contraction and 
in Section \ref{sec:nondegen} we show that the solutions we construct are 
unmarked nondegenerate. 

\begin{remark}
One can modify our gluing construction by rotating the summand $(M_2, g_2)$ 
relative to $(M_1, g_1)$ by any fixed element $\phi \in SO(n-1)$. This action 
produces a smooth family of solutions, parameterized by the space of rotations $SO(n-1)$. 
\end{remark}

As an application of our gluing construction we prove the following. 
\begin{theorem} \label{noncontractible_thm}
If $k \geq 4$, the unmarked moduli space $\mathcal{M}_k$ 
is not contractible. 
\end{theorem}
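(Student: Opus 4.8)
The plan is to run the gluing construction of Theorem~\ref{main_thm} on a carefully chosen pair of summands and to exploit the rotational freedom recorded in the remark above, producing a homotopically essential family inside $\mathcal{M}_k$.

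\emph{The rotation family.} Write $k = k_1 + k_2$ with $k_i \geq 2$ (so $k \geq 4$ is exactly what is needed), and choose summands $(M_i, g_i)$, $M_i = \Ss^n \setminus \{q_0^i,\dots,q_{k_i}^i\}$, satisfying the hypotheses of Theorem~\ref{main_thm}: for instance the metrics of \cite{del-ends2}, which by the remark following that theorem are unmarked nondegenerate and belong to families changing every necksize. Since unmarked nondegeneracy is an open condition, after a small deformation inside moduli space we may also arrange $(M_1,g_1) \not\cong (M_2,g_2)$ conformally and that neither summand has a nontrivial conformal symmetry fixing $q_0^i$, while keeping the asymptotic necksizes at $q_0^1$ and $q_0^2$ equal. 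Fixing a small $r$, Theorem~\ref{main_thm} and the rotation remark then produce, for each $\phi \in SO(n-1)$, a complete constant $Q$-curvature metric $g_\phi$ on $M = \Ss^n \setminus \{k \text{ points}\}$, glued end-to-end along $\B_r(q_0^i)\setminus\B_{r/2}(q_0^i)$ with the twist $\phi$ inserted in the identification of the two necks; this defines a continuous map $F : SO(n-1) \to \mathcal{M}_k$, $\phi \mapsto [(M,g_\phi)]$.

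\emph{Local structure and injectivity.} By the nondegeneracy clause of Theorem~\ref{main_thm}, $\mathcal{M}_k$ is a smooth finite-dimensional manifold near each $[(M,g_\phi)]$ and $F$ is smooth (local structure theory as in \cite{MPU,cjs2024}). I would then check that $F$ is injective: any conformal diffeomorphism of $\Ss^n$ carrying $g_\phi$ to $g_{\phi'}$ permutes the punctures and preserves the ``long neck'' of $M$ (the conformally distinguished region isometric to a long round cylinder $[0,T]\times\Ss^{n-1}$), without exchanging its two ends, since these lead to the non-isomorphic pieces $M_1$ and $M_2$; a normal-form argument on the cylinder then shows the diffeomorphism is a translation composed with a rotation there, hence a product of conformal symmetries of the two summands with that rotation, which is trivial by our choice, so $\phi' = \phi$. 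Combined with a comparison of the twisted approximate solutions showing $dF$ is injective, and with compactness of $SO(n-1)$, this makes $F$ an embedding onto a compact submanifold $\Sigma \cong SO(n-1)$ of $\mathcal{M}_k$.

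\emph{Essentiality --- the main obstacle.} It remains to show that $F$ is not null-homotopic in $\mathcal{M}_k$; since $SO(n-1)$ is not contractible for $n\geq 5$ (already $\pi_1(SO(n-1)) = \Z/2 \neq 0$), this yields the theorem. I expect this to be the genuine difficulty, because the non-triviality cannot be read off from the puncture positions alone: under a full rotation the metric, hence the puncture configuration, returns exactly to its starting point, so one must detect the twist by a finer invariant defined on all of $\mathcal{M}_k$. The natural candidate is the map sending a solution to its Pohozaev-type asymptotic data at the punctures --- a tuple of conformally equivariant vectors built from the positions and necksizes --- together with the balancing identity these data satisfy; this gives a continuous map $\mathcal{M}_k \to \mathcal{B}_k$ to a ``balanced configuration space'' whose homotopy type one can analyze, and the claim is that the image of $F$ carries a nontrivial class there. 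Equivalently, the $SO(n-1)$-family should link the stratum of degenerate solutions $(M_1,g_1)\sqcup(M_2,g_2)$ arising as $r \to 0$, a limit that does not see the twist. Identifying the correct invariant, verifying that it is globally defined and continuous on $\mathcal{M}_k$, and computing it on the family is the crux of the argument, and is precisely where the hypothesis $k \geq 4$ is used.
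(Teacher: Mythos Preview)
Your construction of the rotation family $F:SO(n-1)\to\mathcal{M}_k$ is exactly the right object, but the argument goes off track at the ``Essentiality'' step. You write that ``the non-triviality cannot be read off from the puncture positions alone: under a full rotation the metric, hence the puncture configuration, returns exactly to its starting point,'' and then look for a subtler invariant. This inference is the gap. That the configuration returns to its starting point only says the image of $F$ in the space of puncture configurations is a \emph{cycle}; it says nothing about whether that cycle is null-homotopic there. In fact the forgetful map $\pi:\mathcal{M}_k\to\mathrm{Conf}_k$ to conformal classes already detects the family, and this is precisely how the paper proceeds.

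Concretely: by Liouville's theorem (Lemma~\ref{conf_classes}) one may normalize so that three of the punctures --- say $p_1,p_2$ on the $M_1$ side and $p_k$ on the $M_2$ side --- are fixed throughout the family. Rotating one summand by $\phi\in SO(n-1)$ then moves the remaining punctures $p_3,\dots,p_{k-1}$; after this normalization each of them sweeps out a small $(n-2)$-sphere linking the fixed point $p_k$. Such a sphere is not contractible in $\Ss^n\setminus\{p_k\}$ minus the other marked points, so $\pi\circ F$ is not null-homotopic in $\mathrm{Conf}_k$, and therefore $F$ is not null-homotopic in $\mathcal{M}_k$. This is where the hypothesis $k\geq 4$ enters: one needs at least one puncture left over after fixing three. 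Your proposed Pohozaev-type invariant and the embedding argument for $F$ are unnecessary for this conclusion.
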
 
The same result holds in the scalar curvature setting. 

We prove this theorem by constructing a
submanifold in $\mathcal{M}_k$ that one cannot contract to a point by a 
homotopy. We start with a three-ended metric $(M_1, g_1)$ and a metric 
$(M_2, g_2)$ with $k-1$
punctures. We assume that both metrics are nondegenerate, that both metrics 
have one end with a common asymptotic necksize of $\varepsilon_*$, 
and that $(M_1, g_1)$ admits a Jacobi field deforming the necksize $\varepsilon_*$ 
to first order. We then glue the two metrics together end-to-end, along the 
end with the common asymptotic necksize of $\varepsilon_*$. Within this construction 
we have the freedom to rotate $(M_1, g_1)$ relative to $(M_2, g_2)$, using any element of 
$SO(n-1)$ fixing the common axis of the Delaunay asymptotes. We vary this rotation over the whole 
group, which then 
gives us a family of solutions in $\mathcal{M}_k$. In Section \ref{sec:top} we 
will complete the proof of Theorem \ref{noncontractible_thm} by 
showing the resulting submanifold cannot be contractible. The key tool we use is the 
forgetful map, sending a metric in $\mathcal{M}_k$ to its conformal 
class. \\
\\
\acknowledgement{Frank Pacard suggested this construction to us, and we thank him for 
the prompt. We also thank Jie Qing for helpful conversations about the space of conformal 
structures on a finitely punctured sphere and Pedro Gaspar for generously providing 
figures. ASA is supported by Centro de Modelamiento Matemático (CMM) BASAL fund FB210005 
for center of excellence from ANID-Chile, RC is supported by Fondecyt grant number 11230872 
and by Centro de Modelamiento Matemático (CMM) BASAL fund FB210005 for center of excellence 
from ANID-Chile and ASS is supported by 
CNPq grant number 408834/2023-4, 312027/2023-0, 444531/2024-6 and 403770/2024-6. Part of this 
research was completed while ASA visited the Center for Mathematical Modeling at the University 
of Chile, which we thank for their hospitality. }

\section{Preliminaries} \label{sec:prelim}

In this section we recall some useful material from other sources and perform 
some preliminary computations. 

\subsection{Delaunay metrics}
\label{delaunay_metrics}

The Delaunay metrics are all the constant $Q$-curvature metrics on a 
twice-punctured sphere and, as we will see later, play an important 
role in understanding the behavior of singular constant $Q$-curvature 
metrics with isolated singularities. 

Consider a metric $g = U^{\frac{4}{n-4}} \overset{\circ}{g}$ on $\Ss^n \backslash \{ p,q\}$ where $p$ 
and $q$ are distinct. After 
a rotation and a dilation, we can assume $p = N$ is the north pole and 
$q=S$ is the south pole. Transferring to the Euclidean gauge we write $g= u^{\frac{4}{n-4}} g_{\rm euc}$ 
where $u:\R^n \backslash \{ 0 \} \rightarrow (0,\infty)$. This metric has $Q$-curvature 
equal to $\frac{n(n^2-4)}{8}$ if and only if $u$ satisfies \eqref{flat_eq020} with 
a single singular point at the origin.

Frank and K\"onig \cite{MR3869387} classified 
all the solutions of
\begin{equation} \label{del_pde_eucl_coords}
u: \R^n \backslash \{ 0 \} \rightarrow (0,\infty), \qquad 
\Delta^2 u = \frac{n(n-4)(n^2-4)}{16} u^{\frac{n+4}{n-4}},
\end{equation}  
and we describe them here. First we perform the Emden-Fowler change of 
coordinates, defining 
\begin{equation} \label{emden_fowler_coords} 
\mathfrak{F} : \mathcal{C}^\infty (\B_r(0) \backslash \{ 0 \}) 
\rightarrow \mathcal{C}^\infty ((-\log r, \infty) \times \Ss^{n-1}), 
\qquad \mathfrak{F}(u) (t,\theta) = e^{\frac{4-n}{2}t} u(e^{-t}\theta). 
\end{equation} 
We can of course invert $\mathfrak{F}$, obtaining 
$$\mathfrak{F}^{-1}(v) (x) = |x|^{\frac{4-n}{2}} v(-\log |x|, \theta).$$
While the prefactor of $e^{\frac{4-n}{2}t}$ might 
look a little strange at first, a short computation shows it is geometrically 
necessary. Letting 
$$\Upsilon: \R \times \Ss^{n-1} \rightarrow \R^n \backslash 
\{ 0 \} , \qquad \Upsilon (t,\theta) = e^{-t} \theta,$$
we see 
$$\Upsilon^* (g_{\rm euc})  = e^{-2t} g_{\rm cyl}.$$ 
where $g_{\rm cyl}=dt^2+d\theta^2$ is the cylindrical metric. If we now consider a conformal metric $g= u^{\frac{4}{n-4}} g_{\rm euc}$, we 
see that 
$$\Upsilon^* (g)(t,\theta)  =  \mathfrak{F}(u) (t,\theta)^{\frac{4}{n-4}}
g_{\rm cyl} .$$

After the Emden-Fowler change of coordinates \eqref{del_pde_eucl_coords} becomes
\begin{equation} \label{del_pde_cyl_coords} 
\mathcal{N}_{\rm cyl} (v)=P_{\rm cyl} (v) - \frac{n(n^2-4)(n-4)}{16} v^{\frac{n+4}{n-4}}=0,
\end{equation}
where $v : \R \times \Ss^{n-1} \rightarrow (0,\infty)$. Using that $\Delta_{\rm cyl} = 
\partial_t^2 + \Delta_{\theta}$, where $\Delta_{\theta}$ is the Laplace-Beltrami 
operator on $\Ss^{n-1}$ with respect to the standard spherical metric, we have
\begin{align}
    \label{cyl_paneitz} 
P_{\rm cyl}  & = \Delta_{\rm cyl}^2 - \frac{n(n-4)}{2} \Delta_{\rm cyl} 
- 4 \partial_t^2 + \frac{n^2(n-4)^2}{16}\nonumber\\
& = \partial_t^4 + \Delta_\theta^2+ 2\Delta_\theta \partial_t^2
-\frac{n(n-4)+8}{2}  \partial_t^2 - \frac{n(n-4)}{2} \Delta_\theta + \frac{n^2(n-4)^2}{16}\nonumber
\end{align} is the Paneitz operator of the cylindrical 
metric. C. S. Lin \cite{MR1611691} used a
moving planes argument to prove that solutions 
of \eqref{del_pde_eucl_coords} are rotationally invariant, 
reducing \eqref{del_pde_cyl_coords} to the ODE 
\begin{equation} \label{del_ode} 
\ddddot v -  \frac{n(n-4)+8}{2} \ddot v + \frac{n^2(n-4)^2}{16} 
v - \frac{n(n^2-4)(n-4)}{16} v^{\frac{n+4}{n-4}}=0. 
\end{equation} 
Notice that 
\begin{equation}\label{eq002}
   \mathcal{H}_\varepsilon = - \dot v_\varepsilon\dddot v_\varepsilon + 
   \frac{1}{2} \ddot v_\varepsilon^2
+  \frac{n(n-4)+8}{4} \dot v_\varepsilon^2- \frac{n^2(n-4)^2}{32} 
v_\varepsilon^2 + \frac{(n-4)^2(n^2-4)}{32} v_\varepsilon^{\frac{2n}{n-4}}
\end{equation}
is a first integral for the ODE \eqref{del_ode}. 

\begin{theorem} [Frank and K\"onig \cite{MR3869387}] Let $\displaystyle 
\overline{\varepsilon} = \left ( \frac{n(n-4)}{n^2-4} \right) ^{\frac{n-4}{8}}$.
For each $\varepsilon \in (0,\overline{\varepsilon}]$ there exists a 
unique $v_\varepsilon\in C^4(\R)$ solving the 
ODE \eqref{del_ode} attaining its minimal value of $\varepsilon$ 
at $t=0$. All these solutions are periodic. 
Furthermore, let $v\in C^4(\R \times \Ss^{n-1})$ be a smooth 
solution of the PDE \eqref{del_pde_cyl_coords}. Then either 
$v(t,\theta) = (\cosh (t+T))^{\frac{4-n}{2}}$ for some $T \in \R$ 
or there exist $\varepsilon \in (0,\overline{\varepsilon}]$ and $T \in \R$
such that $v(t,\theta) = v_\varepsilon (t+T)$. 
\end{theorem}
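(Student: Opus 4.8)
The plan is to use the reduction to an ODE recalled above and then carry out a phase-space analysis. By Lin's moving-plane argument \cite{MR1611691} (together with the asymptotic radial symmetry near the origin established by Jin and Xiong \cite{JX}), every positive $v\in C^4(\R\times\Ss^{n-1})$ solving \eqref{del_pde_cyl_coords} is independent of $\theta$, so it suffices to classify the positive solutions of the autonomous fourth-order ODE \eqref{del_ode} on all of $\R$. I would anchor the analysis on two structural facts: the linear part of \eqref{del_ode} factors as
\[
\partial_t^4-\tfrac{n(n-4)+8}{2}\,\partial_t^2+\tfrac{n^2(n-4)^2}{16}
=\Bigl(\partial_t^2-\tfrac{n^2}{4}\Bigr)\Bigl(\partial_t^2-\tfrac{(n-4)^2}{4}\Bigr),
\]
the cylindrical counterpart of the factorization of $P_{\overset{\circ}{g}}$; and the ODE admits the first integral $\mathcal H$ from \eqref{eq002}, confining every orbit to a level set inside the four-dimensional phase space $(v,\dot v,\ddot v,\dddot v)$.

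Next I would describe the phase portrait. The only equilibria are $v\equiv 0$ and $v\equiv\overline{\varepsilon}$ (the equilibrium relation $\tfrac{n^2(n-4)^2}{16}m=\tfrac{n(n^2-4)(n-4)}{16}m^{(n+4)/(n-4)}$ has $m=\overline{\varepsilon}$ as its unique positive root), and one computes $\mathcal H[\overline{\varepsilon}]=-\tfrac{n(n-4)^2}{8}\overline{\varepsilon}^{\,2}$ while $\mathcal H\equiv 0$ along the spherical solution $(\cosh(t+T))^{(4-n)/2}$ (evaluate $\mathcal H$ as $t\to\infty$). The decisive a priori input is a sign condition: if $v$ is bounded then, by interior estimates, $v$ and its derivatives are bounded, so $w:=\ddot v-\tfrac{(n-4)^2}{4}v$ is a bounded solution of $\ddot w-\tfrac{n^2}{4}w=\tfrac{n(n^2-4)(n-4)}{16}v^{(n+4)/(n-4)}>0$; since $\partial_t^2-\tfrac{n^2}{4}$ has trivial bounded kernel, $w$ equals a strictly negative convolution against the Green's function, so $\ddot v<\tfrac{(n-4)^2}{4}v$, and feeding this into $\tfrac{d}{dt}\bigl(e^{\mp\frac{n-4}{2}t}(\dot v\pm\tfrac{n-4}{2}v)\bigr)$ forces $|\dot v|<\tfrac{n-4}{2}v$ throughout $\R$. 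Using $\mathcal H$ and the superlinearity of the right-hand side one then rules out positive solutions unbounded on a half-line (they must blow up in finite time), so every positive global solution is bounded. Finally, the spherical solutions are exactly the global solutions tending to $0$ at both ends: such a $v$ pulls back under $\mathfrak{F}^{-1}$ to a positive solution of \eqref{flat_eq020} on $\R^n$ that has a removable singularity at the origin and is bounded at infinity, hence to some $u_{\lambda,0}$, and $\mathfrak{F}(u_{\lambda,0})(t)=(\cosh(t-\log\lambda))^{(4-n)/2}$.

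For the Delaunay solutions I would shoot: among the even solutions with data $(v(0),\dot v(0),\ddot v(0),\dddot v(0))=(\varepsilon,0,\tau,0)$, $\tau>0$ (so that $t=0$ is a strict minimum of value $\varepsilon$), pick the value $\tau=\tau(\varepsilon)$ separating the solutions that eventually become negative from those that blow up; the first integral traps this solution in a bounded region of phase space, and evenness makes it return to the value $\varepsilon$ with vanishing first and third derivatives, hence it is periodic. One checks $v_\varepsilon\to\overline{\varepsilon}$ as $\varepsilon\nearrow\overline{\varepsilon}$ while the period diverges as $\varepsilon\searrow 0$. (Two alternatives: \eqref{del_ode} is the Euler--Lagrange equation of an energy of the form $J(v)=\int\bigl(\tfrac12\ddot v^2+(\text{lower-order terms})-\text{const}\cdot v^{2n/(n-4)}\bigr)$, so periodic solutions arise as minimizers of $J$ over $2L$-periodic competitors; and since linearizing \eqref{del_ode} at $\overline{\varepsilon}$ yields a real pair together with a purely imaginary pair, the Lyapunov center theorem produces a one-parameter family of small-amplitude periodic orbits bifurcating off $\overline{\varepsilon}$, which one then continues down to $\varepsilon\to 0$.)

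The main obstacle is completeness: showing that \emph{every} bounded positive solution is $\overline{\varepsilon}$, a spherical solution, or some $v_\varepsilon(\cdot+T)$ — from which the existence and uniqueness of $v_\varepsilon$ for each $\varepsilon\in(0,\overline{\varepsilon}]$, and the periodicity of all of them, follow at once. In the second-order (scalar curvature) problem this step is automatic, because the phase space is two-dimensional and the energy level sets are curves one simply reads off; here the single conservation law only cuts the four-dimensional phase space down to a three-manifold, leaving room a priori for quasi-periodic or otherwise exotic bounded orbits. The plan to close this is to extract more from the factored system $\ddot v-\tfrac{(n-4)^2}{4}v=w$, $\ddot w-\tfrac{n^2}{4}w=\tfrac{n(n^2-4)(n-4)}{16}v^{(n+4)/(n-4)}$: the sign $w<0$ and the bound $|\dot v|<\tfrac{n-4}{2}v$ already force oscillation, and one wants a second quantity, monotone along the orbit between consecutive critical points of $v$, which together with $\mathcal H$ pins down the data at those critical points, so that consecutive minima carry identical $(v,\ddot v,\dddot v)$ and the orbit is periodic. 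Producing that monotone quantity, and thereby excluding the exotic orbits, is where I expect the real difficulty to lie.
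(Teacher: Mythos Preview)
The paper does not prove this theorem; it is quoted from Frank and K\"onig \cite{MR3869387} and used as a black box. So there is no ``paper's own proof'' to compare against, and any proof you supply is necessarily going beyond what the paper does.

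That said, your sketch is a reasonable outline of the Frank--K\"onig argument and you have correctly identified both the key structural ingredients (the factorization of the linear part, the first integral $\mathcal H$, the sign $w<0$, the bound $|\dot v|<\tfrac{n-4}{2}v$) and the genuine difficulty: in the four-dimensional phase space a single conserved quantity does not by itself force periodicity, so one must rule out bounded non-periodic orbits. Your proposal to find a quantity monotone between consecutive critical points is the right instinct, but as written it is a hope rather than an argument; Frank and K\"onig in fact exploit the second-order system for $(v,w)$ and a Lyapunov-type functional built from $\mathcal H$ together with the sign constraints to close this step. If you intend to present this as a proof rather than a cited result, that is the part you would need to flesh out; everything else in your outline is essentially correct, including the identification of the spherical solutions via removable singularity and the shooting construction of $v_\varepsilon$.
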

One consequence of the classification by Frank and K\"onig is 
that the global, positive solutions of the ODE \eqref{del_ode} are 
ordered by the Hamiltonian energy $\mathcal{H}$ described in \eqref{eq002}. 

We can now write the Delaunay metric in Euclidean coordinates by 
reversing the coordinate transformation \eqref{emden_fowler_coords},
letting  
\begin{equation} \label{del_soln_eucl_coords} 
u_\varepsilon (x) = \mathfrak{F}^{-1}(v_\varepsilon)(x) = |x|^{\frac{4-n}{2}} v_\varepsilon (-\log |x|), 
\qquad g_\varepsilon = u_\varepsilon^{\frac{4}{n-4}} g_{\rm euc} 
= v_\varepsilon^{\frac{4}{n-4}} g_{\rm cyl} .
\end{equation} 
The geometric formulation of the Frank-K\"onig classification 
now reads: if $g = U^{\frac{4}{n-4}} \overset{\circ}{g}$ is a constant 
$Q$-curvature metric on $\Ss^n \backslash \{ p,q\}$ then, after a 
global conformal transformation, 
either $g$ extends smoothly to the round metric or $g$ is 
singular at both $p$ and $q$ and is the image of a Delaunay metric 
$g_\varepsilon$ after said conformal transformation. 

\subsection{Deformations of Delaunay metrics} 

We describe the geometric deformations of the Delaunay solutions to obtain a family of 
solutions to \eqref{del_pde_eucl_coords}. 

The first family comes from the scaling law associated to \eqref{flat_eq020}, which 
states that if $u$ solves \eqref{flat_eq020} then so does $x\mapsto R^{\frac{n-4}{2}} u(Rx)$ 
for each $R>0$. This family looks even simpler in Emden-Fowler coordinates. 
Using \eqref{del_soln_eucl_coords}, a short computation 
shows us 
\begin{equation*} \label{rescaled_del} 
R^{\frac{n-4}{2}} u_\varepsilon(Rx)  = |x|^{\frac{4-n}{2}} v_\varepsilon 
(-\log|x| -\log R). \end{equation*} 
Thus we see that the scaling law for \eqref{flat_eq020} is equivalent to the fact that if 
$v$ solves the ODE \eqref{del_ode} then so does any translate of $v$. 

The next family is also apparent from the construction of the Delaunay metrics, 
and is the family one obtains by varying the necksize parameter $\varepsilon$. We 
remark here that both these families preserve the rotational symmetry of the 
solution and the singular set. 

The next two families of deformations arise from translations. First we translate 
the origin, obtaining the solution 
$$\overline{u}_{\varepsilon, a} = u_\varepsilon (\cdot -a) ,$$
for any fixed $a \in \R^n$. 
The other family translates the point at infinity and is a bit more complicated to describe. 
We perform 
this transformation by first taking the Kelvin transform of $u_\varepsilon$, which 
interchanges $0$ and the point at infinity, then translating by $a$, and finally 
taking the Kelvin transform again. Recall Kelvin's transformation law for the 
bilaplacian, namely 
\begin{equation} \label{kelvin_law}
\mathbb{K}(u) (x) = |x|^{4-n} u\left ( x|x|^{-2}\right ).
\end{equation} 
It is a straight-forward computation to see that $\Delta^2 (\mathbb{K}(u))(x) = |x|^{-8} 
\mathbb{K} (\Delta^2 u)(x)$. Using \eqref{kelvin_law} we see that if $u$ solves \eqref{flat_eq020} then 
so does $\mathbb{K}(u)$. Now we define 
\begin{eqnarray} \label{translated_del}
 \mathbb{K} (\mathbb{K} (u_\varepsilon) (\cdot - a)) 
& = & |x|^{\frac{4-n}{2}} \left | \frac{x}{|x|} - |x| a\right |^{\frac{4-n}{2}} 
v_\varepsilon \left( -\log |x|+\log \left | \frac{x}{|x|} - |x| a\right | \right ).
\end{eqnarray} 
We can put the first and last of these deformations together, defining 
\begin{equation} \label{deformed_del} 
u_{\varepsilon,R,a} (x) = |x|^{\frac{4-n}{2}} \left | \frac{x}{|x|} - |x|a
\right |^{\frac{4-n}{2}} v_\varepsilon\left ( -\log |x| + \log \left | \frac{x}{|x|}
- |x|a\right |- \log R\right ). 
\end{equation} 
We notice that the function \eqref{deformed_del} has two singular points, namely $x=0$ and 
$x=a/|a|^2$, but that the point at infinity is now a regular point for the metric. In 
Emden-Fowler coordinates \eqref{deformed_del} takes the form 
\begin{equation*} \label{translated_del2} 
v_{\varepsilon,T,a} (t,\theta) := |\theta - e^{-t} a|^{\frac{4-n}{2}} v_\varepsilon 
(t+T+ \log |\theta - e^{-t} a|),
\end{equation*} 
with $T=-\log R$. For simplicity, we denote $u_{\varepsilon,R}:=u_{\varepsilon,R,0}$, 
$u_{\varepsilon,a}:=u_{\varepsilon,1,a}$, $v_{\varepsilon,a}:=v_{\varepsilon,0,a}$ and 
$v_{\varepsilon,T}:=v_{\varepsilon,T,0}$. Note that $v_{\varepsilon,T,a}(t)=
v_{\varepsilon,a}(t+T)$ and $v_{\varepsilon, T+T_\varepsilon,a}(t)=v_{\varepsilon, T, a}(t)$ 
where $T_\varepsilon$ is the period of the $v_\varepsilon$. By \cite[Proposition 2.4]{MR4778469} 
it holds
\begin{align}
    u_{\varepsilon,R,a}(x) & =u_{\varepsilon,R}(x)+((n-4)u_{\varepsilon,R}(x)
    +|x|\partial_ru_{\varepsilon,R}(x))\langle a,x\rangle+\mathcal O(|a|^2|x|^{\frac{8-n}{2}}) 
    \nonumber\\
    & =u_{\varepsilon,R}(x)+\left(\frac{n-4}{2}v_\varepsilon(-\log|x|-\log R)
    -\dot v_\varepsilon(-\log|x|-\log R)\right)|x|^{\frac{4-n}{2}}\langle a,x\rangle\nonumber\\
    & +\mathcal O(|a|^2|x|^{\frac{8-n}{2}})\label{trans_del_expansion}
\end{align}

Once again, this looks simpler in Emden-Fowler coordinates. We have 
\begin{equation} \label{trans_del_expansion2} 
v_{\varepsilon,T,a} (t,\theta) = v_{\varepsilon,T} (t) + e^{-t} \langle \theta, a\rangle 
\left ( \frac{n-4}{2} v_{\varepsilon,T} (t) - \dot v_{\varepsilon,T} (t) \right ) + \mathcal{O} 
(|a|^2e^{-2t}).
\end{equation} 
We observe that the expansions \eqref{trans_del_expansion} and \eqref{trans_del_expansion2} are
informative when $|x| \searrow 0$, respectively $t \rightarrow \infty$. 

At this point we observe that, for each $a \in \R^n$ the 
function $$\chi_a : {\Ss}^{n-1} \rightarrow \R, \qquad \chi_a(\theta) 
= \langle \theta, a \rangle $$ 
is an eigenfunction of $\Delta_\theta$ with eigenvalue $n$. We will see later 
that this is a useful fact.

\subsection{Asymptotics}\label{sec-asymptotics}

The following asymptotic statement holds. 

Let $u:\R^n \backslash \Gamma \rightarrow (0,\infty)$ satisfy \eqref{flat_eq020} with 
singularities at $\Gamma=\{p_0,\ldots,p_k\}$. Let $p_j \in \Gamma$. It was proved 
in \cite{jesse2020} that there exist $\varepsilon_j \in (0, \overline{\varepsilon}]$, 
$T_j\in \R$ and $a_j \in \R^n$ such that 
\begin{equation} \label{asymp_emden_fowler} 
\mathfrak{F} (u (\cdot - p_j) ) (t, \theta) = 
v_{\varepsilon_j,T_j, a_j} (t, \theta) + \mathcal{O}(e^{-\beta_j t})
\end{equation} 
for some $\beta_j > 1$ as $t \rightarrow \infty$. We can write this asymptotic 
expansion in the Euclidean gauge as 
\begin{eqnarray*} 
u(x-p_j) & = & |x|^{\frac{4-n}{2}} \left ( \left | \frac{x}{|x|} - |x| a_j \right |
^{\frac{4-n}{2}} v_{\varepsilon_j} \left ( -\log |x| + \log \left | 
\frac{x}{|x|} - |x|a_j \right | + T_j \right )+ \mathcal O(|x|^{\beta_j})\right ) \\ 
& = & u_{\varepsilon_j, R_j, a_j} (x) + \mathcal O(|x|^{\frac{4-n}{2} + \beta_j}),
\end{eqnarray*} 
where $R_j = e^{-T_j}$. 

This now allows us to assign asymptotic data to each puncture $p_j \in\Gamma$, namely an 
asymptotic necksize $\varepsilon_j>0$, a scaling parameter $T_j$ and a translation 
parameter $a_j\in\R^n$. Since $v_\varepsilon$ is periodic, 
$v_\varepsilon(t+T)=v_\varepsilon(t+T+mT_\varepsilon)$, for any $m\in\mathbb Z$. Then we 
can consider $T<0$ so that $v_\varepsilon$ achieves its minimum at $t=-T>0$. Recall 
that $T_\varepsilon>0$ is the period of the Delaunay solution $v_\varepsilon$.

\section{The Approximate Solution}
\label{approx_sltn}

We begin with two Riemannian manifolds $(M_1,g_1)$ and $(M_2,g_2)$, where 
$M_i=\Ss^n\backslash\Lambda_i$ and $g_i=U_i^{\frac{4}{n-4}}\mathring {g}$, 
for $i=1,2$. Here, $\Lambda_i=\{q_0^i,\ldots,q^i_{k_i}\}$, for $i=1,2$ denotes a finite 
set of points on the sphere $\Ss^n$, and $\mathring g$ represents the standard metric 
on $\Ss^n$. We assume that both $g_1$ and $g_2$ are complete metrics with $Q$-curvature 
equal to $(n^2-4)/8$. Each conformal factor $U_i$ satisfies \eqref{summand_bvp}. Define 
the radius $r_0$ by
$$r_0<\frac{1}{4}\min\{\inf\limits_{j\neq l}dist_{\mathring g}(q_j^i,q_l^i):i\in\{1,2\}\}.$$ 
Equivalently we write the conformal factors in the Euclidean gauge as $u_i = (U_i \circ 
\Pi^{-1}) \cdot u_{\rm sph}$. 
Then for each $i\in\{1,2\}$, the balls $\B_{r_0}(q_j^i)$ are disjoint. In normal 
coordinates centered at $q_j^i$, using \eqref{asymp_emden_fowler} we observe that the 
conformal factor satisfies the asymptotics 
\begin{equation}\label{eq018}
    \mathfrak{F} (u_i  ) (t, \theta) = v_{\varepsilon_j^i, a_j^i} (t-T_j^i, \theta) 
    + \mathcal{O}(e^{-\beta_j^i t}),
\end{equation}
for some parameters $\varepsilon_j^i>0$, $T_j^i>0$, $a\in\R^n$ and $\beta_j^i>1$. Given that 
the necksizes at $q_0^1$ and $q_0^2$ are assumed to be identical, we define 
$\varepsilon_*:=\varepsilon_0^1=\varepsilon_0^2$.

Let $m \in \mathbb{N}$ and define the annuli 
$$A_{1,m} = \mathbf{B}_{r_0e^{-T_0^1 -m T_{\varepsilon_*}}}(q_0^1)\backslash 
\overline{\mathbf{B}_{r_0e^{-T_0^1 -(m+1) T_{\varepsilon_*}}}(q_0^1)}, \quad 
A_{2,m} = \mathbf{B}_{r_0e^{-T_0^2 -m T_{\varepsilon_*}}}(q_0^2)\backslash 
\overline{\mathbf{B}_{r_0e^{-T_0^2 -(m+1) T_{\varepsilon_*}}}(q_0^2)}.$$
We introduce cylindrical coordinates in the punctured balls $\mathbf{B}_{r_0}(q_0^1)\backslash
\{q_0^1\}$ and $\mathbf{B}_{r_0}(q_0^2)\backslash\{q_0^2\}$ as follows
\begin{itemize}
    \item In $\mathbf{B}_{r_0}(q_0^1)\backslash\{q_0^1\}$, let $t=-\log\left(
    \frac{dist_{\mathring g}(x,q_0^1)}{r_0}\right)$.
    \item In $\mathbf{B}_{r_0}(q_0^2)\backslash\{q_0^2\}$, let $\tau=-\log\left(
    \frac{dist_{\mathring g}(x,q_0^2)}{r_0}\right)$.
\end{itemize}
Consequently, each punctured ball can be identified with the half cylinder $(0,\infty)\times 
\Ss^{n-1}$. Note that $x \in A_{1,m}$ if and only if $-\log\left(dist_{\mathring g}
(x,q_0^1)r_0^{-1}\right) = t$
for $T_0^1+m T_{\varepsilon_*} \leq t \leq T_0^1+(m+1)T_{\varepsilon_*}$, and similarly
$y \in A_{2,m}$ if and only if $-\log\left(dist_{\mathring g}(y,q_0^2)r_0^{-1}\right) = \tau$ 
for $T_0^2+mT_{\varepsilon_*} \leq \tau \leq T_0^2+(m+1)T_{\varepsilon_*}$. We now proceed to 
identify the two annuli $A_{1,m}$ and $A_{2,m}$ according to the rule that 
\begin{equation} \label{approx_soln_identification} 
x \sim y \Leftrightarrow t+\tau = T_0^1 + T_0^2 + (2m+1)T_{\varepsilon_*},
\end{equation} 
where $t$ and $\tau$ are as described above, and we denote the common annulus as $\mathbb{A}_m$. 
Observe that after this quotient we identify the inner boundary of $A_{1,m}$ with the outer 
boundary of $A_{2,m}$ 
and vice-versa, see the Figure \ref{fig-cutoff-functions}. In the spherical gauge we can 
write \eqref{approx_soln_identification}
as 
\begin{equation*} \label{approx_soln_identification2} 
dist_{\mathring g}(x,q_0^1)\cdot dist_{\mathring g}(y,q_0^2) =
(r_0)^2 e^{-T_0^1-T_0^2 - (2m+1)T_{\varepsilon_*}}. 
\end{equation*} 

We define the extended annulus 
\begin{equation}\label{eq023}
    \widehat{\mathbb{A}}_m = \mathbf{B}_{r_0} (q_0^1) \backslash \overline{
\mathbf{B}}_{r_0e^{-T_0^1 -m T_{\varepsilon_*}}}(q_0^1) \cup \mathbb{A}_m 
\cup \mathbf{B}_{r_0} (q_0^2) \backslash \overline{
\mathbf{B}}_{r_0e^{-T_0^2 -m T_{\varepsilon_*}}}(q_0^2),
\end{equation}
where $r_0$ is defined as above.
This extended annulus will prove useful in later computations. In Emden-Fowler coordinates, 
we can identify $\widehat{\mathbb{A}}_m$ with $(-T_0^1-(m+1/2)
T_{\varepsilon_*},T_0^2+(m+1/2)T_{\varepsilon_*})\times\Ss^{n-1}$. 

In the common annulus $\mathbb{A}_m$, we can write $g_1$ in Emden-Fowler 
coordinates as 
\begin{equation} \label{annulus_metric1} 
g_1 = v_1(t,\theta)^{\frac{4}{n-4}} (dt^2 + d\theta^2), \qquad v_1(t,\theta)
=v_{\varepsilon_*, a_0^1} (t-T_0^1, \theta) + \mathcal{O}(e^{-\beta_0^1 t}),\end{equation} 
while $g_2$ in Emden-Fowler coordinates has the form 
\begin{align} \label{annulus_metric2}
g_2 & =  v_2(\tau,\theta)^{\frac{4}{n-4}} (d\tau^2+d\theta^2) \\ \nonumber 
& =   (v_2(-t+T_0^1+T_0^2+(2m+1)T_{\varepsilon_*}, \theta))^{\frac{4}{n-4}} 
(dt^2+d\theta^2), \\ \nonumber 
v_2(\tau,\theta) & =  v_{\varepsilon_*, a_0^2} (\tau-T_0^2, \theta) + 
\mathcal{O}(e^{-\beta_0^2 \tau}) \nonumber
\end{align}
where $t$ and $\tau$ are related by \eqref{approx_soln_identification}.

By applying a global conformal transformation of the round sphere, we can adjust the metric 
$(M_i,g_i)$ at the point $q_0^i$. It can be readily verified that the metrics \eqref{annulus_metric1} 
and \eqref{annulus_metric2} satisfy
$$\mathfrak{F}^{-1}(v_i)(x)=u_{\varepsilon_*,R,a_0^i}(x)+\mathcal O(|x|^{\frac{4-n}{2}+\beta_0^i}).$$
Finally, using the transformation \eqref{translated_del}, we translate by $-a_0^i$ to obtain that 
the conformal factor has the expansion $u_{\varepsilon_*,R}(x)+
\mathcal O(|x|^{\frac{4-n}{2}+\beta_0^i})$ around $q_0^i$. Note that this construction results 
in a slight shift in the positions of the points $q_1^i,\ldots,q_{k_i}^i$. For convenience, we 
will maintain the original notation $q_j^i$ for these points. Therefore, we can assume that
\begin{equation}\label{eq001}
    v_i(t,\theta)=v_{\varepsilon_*}(t-T_0^i)+w_0^i(t,\theta),
\end{equation}
with $w_0^i(t,\theta)=\mathcal{O}(e^{-\beta_0^i t})$. This implies that $\|w_0^i\|_{C^{4,\alpha}
(\mathbb A_m)}=\mathcal O(e^{-\beta_0^imT_{\varepsilon_*}})$.

Finally we let $\chi$ be a smooth function satisfying 
\begin{equation}\label{eq003}
    \chi: \R \rightarrow [0,1], \qquad \chi(t) = \left \{ \begin{array}{rl} 
1 & t \leq T_0^1+(m+1/4)T_{\varepsilon*} \\ 0 & t \geq T_0^1+(m+3/4)T_{\varepsilon*}\end{array} \right. 
\end{equation}
and define the approximate solution on the annulus $\mathbb{A}_m$ by 
\begin{align}
v_m(t,\theta) & =  \chi( t) v_1(t,\theta) + (1-\chi(t))
v_2(\tau,\theta) \nonumber \\ \nonumber 
&= \chi (t) v_1(t,\theta) + (1-\chi(t))
v_2(-t+T_0^1+T_0^2+(2m+1)T_{\varepsilon_*},\theta)\\
&= v_{\varepsilon_*}(t-T_0^1)+\chi (t) w_0^1(t,\theta) + (1-\chi(t))
w_0^2(-t+T_0^1+T_0^2+(2m+1)T_{\varepsilon_*},\theta)\nonumber\\
& = v_{\varepsilon_*}(t-T_0^1) +\mathcal O(e^{-\beta mT_{\varepsilon_*}}),\label{annulus_metric3}
\end{align} 
where $\beta=\min\{\beta_0^1,\beta_0^2\}>0$. Using this new function $v_m$ as a conformal factor 
in the annulus $\mathbb{A}_m$ we obtain the metric $g_m = v_m^{\frac{4}{n-4}} (dt^2 + d\theta^2)$. 
This procedure yields a smooth metric on the connected sum
$$M = (M_1 \backslash \B_{r_0e^{-T_0^1-(m+1)T_{\varepsilon_*}}}(q_0^1)) \cup (M_2 \backslash 
\B_{r_0e^{-T_0^2-(m+1)T_{\varepsilon_*}}}(q_0^2)) /\sim,$$ 
where $\sim$  denotes the identification described in \eqref{approx_soln_identification}. 
Consequently, $g_m$ can be identified as a complete metric on $\Ss^n \backslash \{ q_1^1, \dots, 
q^1_{k_1}, q^2_1, \dots, q^2_{k_2}\}$.  

We now make some remarks regarding our approximate solutions. First, observe
that \eqref{annulus_metric1} and \eqref{annulus_metric2} parameterize the annulus 
$\mathbb{A}_m$ with reversed orientations, which is compatible with the identification 
\eqref{approx_soln_identification}. Second, since the approximate solution $g_m$ coincides 
with $g_i$ in $M_i\backslash \B_{r_0e^{-T_0^i-mT_{\varepsilon_*}}}(q_0^i)$, we conclude that 
$$Q_{g_m}=\frac{n(n^2-4)}{8}+\psi,$$
for some smooth function $\psi$ with compact support in $\mathbb A_m$.

\begin{lemma}\label{lem001}
 $\|\psi\|_{C^{4,\alpha}(M)}=\mathcal O(e^{-m\beta T_{\varepsilon_*}})$, for some $\beta>1$.
\end{lemma}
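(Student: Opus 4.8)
The plan is to localise the error and then read off its size from a Taylor expansion. First I would observe that $\psi$ is supported where the cutoff $\chi$ of \eqref{eq003} is not locally constant: wherever $\chi$ is locally constant the approximate metric $g_m$ agrees with $g_1$ or with $g_2$, and both of those have $Q$-curvature exactly $\tfrac{n(n^2-4)}{8}$. Hence $\operatorname{supp}\psi$ is contained in $\Sigma_m:=\{T_0^1+(m+\tfrac14)T_{\varepsilon_*}\le t\le T_0^1+(m+\tfrac34)T_{\varepsilon_*}\}\times\Ss^{n-1}$, which in the cylindrical coordinate is isometric to the \emph{fixed} finite cylinder $[0,\tfrac12T_{\varepsilon_*}]\times\Ss^{n-1}$, independent of $m$. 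Writing $g_m=v_m^{\frac{4}{n-4}}g_{\rm cyl}$ and using \eqref{transformationlaw} with background metric $g_{\rm cyl}$,
\[
\psi=Q_{g_m}-\tfrac{n(n^2-4)}{8}=\tfrac{2}{n-4}\,v_m^{-\frac{n+4}{n-4}}\,\mathcal N_{\rm cyl}(v_m),
\]
with $\mathcal N_{\rm cyl}$ as in \eqref{del_pde_cyl_coords}. By \eqref{annulus_metric3}, $v_m$ is uniformly close to $v_{\varepsilon_*}(\cdot-T_0^1)$ on $\Sigma_m$, so (for $m$ large) it is bounded above and away from $0$ there, and $v_m^{-\frac{n+4}{n-4}}$ has $C^{4,\alpha}(\Sigma_m)$-norm bounded uniformly in $m$; moreover $g_m$ is uniformly comparable to $g_{\rm cyl}$ on $\Sigma_m$, so it is enough to bound $\|\mathcal N_{\rm cyl}(v_m)\|_{C^{4,\alpha}(\Sigma_m)}$ computed in the cylindrical coordinate.

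Next I would exploit the cancellation built into the identification \eqref{approx_soln_identification}. On $\mathbb A_m$ write $v_m=v_1+(1-\chi)(v_2-v_1)$. By \eqref{eq001} the Delaunay part of $v_1$ is $v_{\varepsilon_*}(t-T_0^1)$, while re-expressing $v_2$ in the $t$-variable via \eqref{approx_soln_identification} gives Delaunay part $v_{\varepsilon_*}\!\big(-(t-T_0^1)+(2m+1)T_{\varepsilon_*}\big)$; since $v_{\varepsilon_*}$ is $T_{\varepsilon_*}$-periodic and even about its minimum at $t=0$ (Frank--K\"onig \cite{MR3869387}), this equals $v_{\varepsilon_*}(t-T_0^1)$ — this is exactly the cancellation recorded in the last two lines of \eqref{annulus_metric3}. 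Consequently $v_2-v_1=w_0^2(-t+T_0^1+T_0^2+(2m+1)T_{\varepsilon_*},\theta)-w_0^1(t,\theta)$, so $\|v_2-v_1\|_{C^{k,\alpha}(\mathbb A_m)}=\mathcal O(e^{-\beta mT_{\varepsilon_*}})$ for every $k$, with $\beta=\min\{\beta_0^1,\beta_0^2\}>1$; here I use that the asymptotics \eqref{eq018} hold in every $C^{k,\alpha}$, which follows from interior Schauder estimates for the elliptic equation \eqref{del_pde_cyl_coords}. Since $\chi$ is a fixed profile translated by $mT_{\varepsilon_*}$, its derivatives are bounded independently of $m$, and $\phi:=(1-\chi)(v_2-v_1)$ satisfies $\|\phi\|_{C^{8,\alpha}(\mathbb A_m)}=\mathcal O(e^{-\beta mT_{\varepsilon_*}})$ with $m$-independent constant.

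Finally I would Taylor-expand $\mathcal N_{\rm cyl}$ about $v_1$. Because $g_1$ has constant $Q$-curvature, $\mathcal N_{\rm cyl}(v_1)=0$ on $\mathbb A_m$ (the map $t\mapsto T_0^1+T_0^2+(2m+1)T_{\varepsilon_*}-t$ is a $g_{\rm cyl}$-isometry, so this is consistent with the reparametrisation of $v_2$, and likewise $\mathcal N_{\rm cyl}(v_2)=0$). Since $v_m=v_1+\phi$,
\[
\mathcal N_{\rm cyl}(v_m)=D\mathcal N_{\rm cyl}(v_1)[\phi]+\mathcal R(\phi),
\]
where $D\mathcal N_{\rm cyl}(v_1)=P_{\rm cyl}-\tfrac{n(n+4)(n^2-4)}{16}v_1^{\frac{8}{n-4}}$ is a fourth-order linear operator with $C^{4,\alpha}(\Sigma_m)$-bounded coefficients uniformly in $m$, and the quadratic remainder obeys $\|\mathcal R(\phi)\|_{C^{4,\alpha}(\Sigma_m)}\le C\|\phi\|_{C^{8,\alpha}}^2$ with $C$ independent of $m$ (using the Banach-algebra property of $C^{4,\alpha}$ and the uniform two-sided bounds on $v_1$ over $\Sigma_m$). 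Hence $\|\mathcal N_{\rm cyl}(v_m)\|_{C^{4,\alpha}(\Sigma_m)}\le C(\|\phi\|_{C^{8,\alpha}}+\|\phi\|_{C^{8,\alpha}}^2)=\mathcal O(e^{-\beta mT_{\varepsilon_*}})$, and combining this with the first paragraph yields $\|\psi\|_{C^{4,\alpha}(M)}=\mathcal O(e^{-\beta mT_{\varepsilon_*}})$ with $\beta=\min\{\beta_0^1,\beta_0^2\}>1$.

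The hard part is the cancellation in the second paragraph: it is the precise tuning of the gluing rule \eqref{approx_soln_identification} against the period $T_{\varepsilon_*}$ and the reflection symmetry of $v_{\varepsilon_*}$, together with the normalisation (by a global conformal transformation of the round sphere) that already removed the translation parameters $a_0^i$, which forces the two Delaunay profiles to agree on $\mathbb A_m$; without this, $v_2-v_1$ would be $\mathcal O(1)$ rather than exponentially small and $g_m$ would not be an approximate solution. The only other point requiring care — upgrading the expansions \eqref{eq018} and \eqref{eq001} to $C^{8,\alpha}$ so that $\psi=\tfrac{2}{n-4}v_m^{-\frac{n+4}{n-4}}\mathcal N_{\rm cyl}(v_m)$ makes sense in $C^{4,\alpha}$ — is routine elliptic bootstrapping, since $v_1,v_2$ are smooth solutions of \eqref{del_pde_cyl_coords}.
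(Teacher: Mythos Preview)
Your argument is correct and follows essentially the same route as the paper: localise to the transition region, write the approximate conformal factor as an exponentially small perturbation of an exact solution, and Taylor-expand the nonlinear operator. The only cosmetic difference is the choice of base point---you expand $\mathcal N_{\rm cyl}$ about $v_1$, while the paper expands about the Delaunay profile $v_{\varepsilon_*}(\cdot-T_0^1)$ (equivalently, writes $g_m=(1+v)^{4/(n-4)}g_{\varepsilon_*}$ and uses $\mathcal N_{g_{\varepsilon_*}}(1)=0$); since $v_1-v_{\varepsilon_*}(\cdot-T_0^1)=w_0^1=\mathcal O(e^{-\beta_0^1 mT_{\varepsilon_*}})$ on $\mathbb A_m$, the two expansions are interchangeable and yield the same estimate. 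Your re-derivation of the Delaunay cancellation is exactly the content of \eqref{annulus_metric3}, which the paper simply cites.
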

\begin{proof} We have 
\[g_m(t,\theta)=\bigg(v_{\varepsilon_*}(t-T_{0}^1)(1+v(t,\theta))\bigg)^{\frac{4}{n-4}}
g_{\rm cyl}=\left(1+v(t,\theta)\right)^{\frac{4}{n-4}}g_{\varepsilon_*}.\]
where $v(t, \theta)=\mathcal O(e^{-m\beta T_{\varepsilon_*}})$,
since $v_{\varepsilon_*}$ is periodic. Recall that the $Q$-curvature of the Delaunay metric 
\(g_{\varepsilon_*}=v_{\varepsilon_*}^{\frac{4}{n-4}}g_{\rm cyl}\) is equal to \(n(n^2-4)/8\). 
Thus, $\mathcal N_{g_{\varepsilon_*}}(1)=0$, where $\mathcal N_{g_{\varepsilon_*}}$ is given 
by \eqref{eq020}. Thus
\begin{align*}
    \frac{n-4}{2}\psi(1+v)^{\frac{n+4}{n-4}}
    &=\mathcal N_{g_\varepsilon}(1+v)= L_{g_\varepsilon}(v)+R(v)
\end{align*}
where $L_{g_{\varepsilon_*}}$ is given in \eqref{eq010}
and $R(v)=\mathcal O(v^2)$. This implies the result.
\end{proof}

\section{Linear Analysis} \label{sec:lin_anal}

In this section, our goal is to discuss the core of the paper: the invertibility of the 
linearized operator at the approximate solution constructed in the previous section, within 
appropriate function spaces.

\subsection{Function spaces} 

We can identify the punctured balls $\mathbf{B}_{r_0}(q_j^i)\setminus\{q_j^i\}$ with the 
half cylinder $(0,+\infty)\times\Ss^{n-1}$ similar to the identification at the beginning of 
Section \ref{approx_sltn}. 

For the next definition consider $\displaystyle M_i^c=\Ss^n\backslash
\bigcup_{j=0}^{k_i}B_{r_0}(q_j^i)$. Note that we can write
\begin{equation}\label{eq022}
    M={M_1^c}\cup \left(\bigcup_{j=1}^{k_1}B_{r_0}(q_j^1)\setminus\{q^1_j\}\right)\cup {M_2^c}
    \cup \left(\bigcup_{j=1}^{k_2}B_{r_0}(q_j^2)\setminus\{q^2_j\}\right)\cup \widehat{\mathbb{A}}_m.
\end{equation}
Also, we will consider the identification 
$$\widehat{\mathbb A}_m\sim(-T_0^1-(m+1/2)T_{\varepsilon_*},
T_0^2+(m+1/2)T_{\varepsilon_*})\times\Ss^{n-1}.$$

\begin{definition}
    Given $\delta\in\R$, define $C_\delta^{k,\alpha}(M_i)$ and $C_\delta^{k,\alpha}
    (\widehat{\mathbb A}_m)$ as the sets of functions $u$ such that the norms
    $$\|u\|_{C^{k,\alpha}_\delta(M_i)}:=\|u\|_{C^{k,\alpha}(M_i^c)}+\max_{0\leq j\leq k_i}
    \sup_{t_j\geq 1}e^{-\delta t_j}\|u\|_{C^{k,\alpha}((t_j-1,t_j+1)\times\Ss^{n-1})}$$
    and
    $$\|u\|_{C_\delta^{k,\alpha}(\widehat{\mathbb A}_m)}=
    \sup\left(\frac{\cosh^\delta(mT_\varepsilon)}{\cosh^\delta s}
    \left\|u\right\|_{C^{k,\alpha}([s-1,s+1]\times\Ss^{n-1})}\right)$$
    are finite, respectively. The supremum is taken over $s$ such that $[s-1,s+1]
    \subset (-T_0^1-(m+1/2)T_{\varepsilon_*},T_0^2+(m+1/2)T_{\varepsilon_*})$. Then we 
    define $C_\delta^{k,\alpha}(M)$ to be the space of functions such that the norm
    \begin{align*}
        \|u\|_{C^{k,\alpha}_\delta(M)}  
 & =\|u\|_{C^{k,\alpha}({M_1^c})}+ \|u\|_{C^{k,\alpha}({M_2^c})}+\max_{1\leq j\leq k_1}
 \sup_{t_j\geq 1}e^{-\delta t_j}\|u\|_{C^{k,\alpha}((t_j-1,t_j+1)\times\Ss^{n-1})}\\
   &   +\max_{1\leq j\leq k_2}\sup_{\tau_j\geq 1}e^{-\delta \tau_j}
   \|u\|_{C^{k,\alpha}((\tau_j-1,\tau_j+1)\times\Ss^{n-1})}+ 
   \|u\|_{C_\delta^{k,\alpha}(\widehat{\mathbb A}_m)}
    \end{align*}
    is finite.
\end{definition}

\subsection{The Jacobi operator on a Delaunay solution}\label{sec-jacobi-operator}

The linearization of the operator $\mathcal N_{g_{\rm euc}}$ given in \eqref{flat_eq020} about 
a Delaunay solution $u_\varepsilon$ will be denoted as $L_\varepsilon$ and is given by
$$L_\varepsilon = \Delta^2 - \frac{n(n+4)(n^2-4)}{16} 
u_\varepsilon^{\frac{8}{n-4}}$$
or differentiating \eqref{del_pde_cyl_coords} about the corresponding Delaunay 
solution $v_\varepsilon$ we get
\begin{align} \label{del_linearization2} 
\mathcal{L}_\varepsilon & =  \partial_t^4 + \Delta_{\theta}^2 + 
2\Delta_{\theta} \partial_t^2 - \frac{n(n-4)}{2} \Delta_{\theta} \\ \nonumber 
&- \frac{n(n-4)+8}{2} \partial_t^2 + \frac{n^2(n-4)^2}{16} - \frac{n(n+4)(n^2-4)}{16} 
v_\varepsilon^{\frac{8}{n-4}}.
\end{align} 
These operators are related by
$$\mathcal{L}_\varepsilon (w) (t,\theta) = e^{\frac{4-n}{2} t}
L_\varepsilon (\mathfrak{F}^{-1} (w)) \circ \Upsilon (t,\theta).$$

A general solution of $L_\varepsilon(v)=0,$ is called a Jacobi field. It is easy to see that 
if $s\mapsto v_s$ is a differentiable path of solutions of $\mathcal{N}_{g_{\rm euc}}(v_s)=0$, 
then $(\partial/\partial s)|_{s=0}v_s$ is a Jacobi field. In this way, taking the derivative of 
the family of solutions $u_{\varepsilon,R,a}$ with respect to one parameter, we obtain a Jacobi 
field. First, we consider the one parameter families $\varepsilon\mapsto u_{\varepsilon}$ and 
$R\mapsto u_{\varepsilon,R}$. The derivatives of these families of solutions give us solutions to
$$L_\varepsilon(w_\varepsilon^{0,\pm})=0,$$
where
\begin{align*}
    w_\varepsilon^{0,-}(x) & =\left.\frac{d}{d\eta}\right|_{\eta=0}u_{\varepsilon+\eta}(x)
    =|x|^{\frac{4-n}{2}}\left.\frac{d}{d\eta}\right|_{\eta=0}v_{\varepsilon+\eta}(-\log|x|)
    =|x|^{\frac{4-n}{2}}v_{\varepsilon}^{0,-}(-\log|x|),\\
    w_\varepsilon^{0,+}(x) & = \left.\frac{d}{dR}\right|_{R=1}u_{\varepsilon,R}(x)
    =|x|^{\frac{4-n}{2}}\dot v_\varepsilon(-\log |x|)=|x|^{\frac{4-n}{2}}v_\varepsilon^{0,+}(-\log|x|).
\end{align*}

Differentiating the relation $v_\varepsilon(t+T_\varepsilon)=v_\varepsilon(t)$, it is 
straightforward to verify that $v_\varepsilon^{0,+}$ is bounded and periodic, while 
$v_\varepsilon^{0,-}$ grows linearly, see \cite[Lemma 8]{jesse2020}. Here, $T_\varepsilon$ is 
the period of $v_\varepsilon$. Consider $(\phi_j,\lambda_j)$ the eigendata of the Laplacian
on $\Ss^{n-1}$, that is, $\Delta_{\theta}\phi_j+\lambda_j\phi_j=0$. The Jacobi fields 
$v_\varepsilon^{0,\pm}$ correspond to $\lambda_0=0$, considering $\phi_0\equiv 1$. The next 
Jacobi fields correspond to the eigenvalues $\lambda_1=\cdots=\lambda_n=n-1$, where the 
corresponding eigenfunctions are $\phi_j(\theta)=\theta_j$. Using \eqref{trans_del_expansion}, these are
\begin{align*}
    w_\varepsilon^{j,+}(x) & = \left.\frac{\partial}{\partial a_j}\right|_{a=0}
    u_{\varepsilon,1,a}(x)=|x|^{\frac{6-n}{2}}\left(\frac{n-4}{2}v_\varepsilon(-\log|x|)
    -\dot v_\varepsilon(-\log|x|)\right)\phi_j(\theta)\\
    & =|x|^{\frac{4-n}{2}}v_\varepsilon^{j,+}(-\log|x|)\phi_j(\theta),\\
    w_\varepsilon^{j,-}(x) & = \left. \frac{\partial}{\partial a_j} \right |_{a=0}u_\varepsilon
    (\cdot - a) =|x|^{\frac{2-n}{2}}\left(\frac{4-n}{2}v_\varepsilon(-\log|x|)
    -\dot v_\varepsilon(-\log|x|)\right)\phi_j(\theta)\\
    & =|x|^{\frac{4-n}{2}}v_\varepsilon^{j,-}(-\log |x|)\phi_j(\theta).
\end{align*}

Notice in particular that the Jacobi fields $v_\varepsilon^{j,\pm}=\mathcal O(e^{\mp t})$ as $t\to\infty$. We can use separate variables and write 
$$v(t,\theta)=\sum v_j(t)\phi_j(\theta),$$
In this case, $v_j$ satisfies the ordinary differential equation
	\begin{align*}
		\mathcal L_{\varepsilon,j}(v_j):=\ddddot {v_j}-&\left(2\lambda_j+\frac{n(n-4)+8}{2}
		\right)\ddot v_j	\\
  &+\left(\lambda_j^2+\frac{n(n-4)}{2}\lambda_j
		+\frac{n^2(n-4)^2}{16}-\frac{n(n+4)(n^2-4)}{16}
		v_\varepsilon^{\frac{8}{n-4}}\right)v_j=0.
		\end{align*}

The {\bf indicial roots} of \eqref{del_linearization2} are the exponential growth rates of 
the solutions to $\mathcal L_{\varepsilon,j}(v)=0$. It was proved in \cite{jesse2020} that 
the set of indicial roots is given by
$$\Gamma_\varepsilon=\{\ldots,-\gamma_{\varepsilon,2},-\gamma_{\varepsilon,1}
=-1,0,1=\gamma_{\varepsilon,1},\gamma_{\varepsilon,2},\ldots\},$$
where $\gamma_{\varepsilon,j}<\gamma_{\varepsilon,j+1}\to+\infty$.

\subsection {Linear analysis on the approximate solution} \label{sec:right_inverse}

\begin{definition}\label{definition-defi-space}
The deficiency space $\mathcal W_{g_i}$ of $(M_i,g_i)$ is defined as
$$\mathcal W_{g_i}=\mbox{Span}\{\chi_iv_{\varepsilon_j^i}^{l,\pm}\phi_l:l=0,\ldots,n\mbox{ and }
j=0,\ldots,k_i\},$$
where $\chi_i$ is a fixed cutoff function equaling one in a small ball around each singular 
point and vanishing outside a slightly larger ball.
The deficiency space $\mathcal W_{g_m}$ of $(M,g_m)$ is defined as
$$\mathcal W_{g_m}=\mbox{Span}\{\chi_iv_{\varepsilon_j^i}^{l,\pm}\phi_l:l=0,
\ldots,n,\;j=1,\ldots,k_i\mbox{ and }i=1,2\}.
$$
Here $v_{\varepsilon_j^i}^{l,\pm}$ and $\phi_l$ are defined in Section \ref{sec-jacobi-operator}.
\end{definition}

We remark here that the definition of the deficiency space above is slightly 
different from the one in \cite{cjs2024}. In the previous paper we consider the 
marked moduli space, but in the present setting we want to allow the puncture points to 
move, and so we must consider the unmarked moduli space. This difference forces us 
to include the next Fourier mode of Jacobi fields in the deficiency space. 

Given $w\in\mathcal W_{g_m}$ we can write
\begin{equation}\label{eq005}
    w=\sum_{i=1}^2\sum_{j=1}^{k_i}\sum_{l=0}^n\alpha_{i,j}^{l,\pm}v_{\varepsilon_j^i}^{l,\pm}\phi_l\chi_i,
\end{equation}

    where $\alpha_{i,j}^{l,\pm}$ are real numbers. For $j=1,\ldots,k_i$, at the ball 
    $B_{r_0}(q_j^i)$ we can write $g_m=(v_{\varepsilon_j^i}(t-T_j^i)+v)^\frac{4}{n-4}g_{\rm cyl}$, 
    for some decaying function $v$. 

\begin{proposition}[Linear Decomposition -- \cite{cjs2024}]\label{propo001} 
Let $1< \delta < \min\{ \gamma_{\varepsilon_j^i,n+1}:0 \leq j \leq k_i\}$,  
$u\in C^{4,\alpha}_\delta(M_i)$ and $\phi \in C^{0,\alpha}_{-\delta}(M_i)$ satisfying 
$L_{g_i}(u) = \phi$. Then there exist $w \in \mathcal{W}_{g_i}$ and $v\in 
C^{4,\alpha}_{-\delta}(M_i)$ such that $u=w+v$. 
\end{proposition}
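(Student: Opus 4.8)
The plan is to reduce the statement to a model problem on the cylindrical ends of $M_i$ and then to apply the structure theory for solutions of the linearized Delaunay equation developed in \cite{jesse2020} and \cite{cjs2024}; the only feature distinguishing the present setting from \cite{cjs2024} is that here $\delta>1$, so the deficiency space must also record the behaviour of $u$ along the indicial roots $\pm1$, which is exactly why $\mathcal W_{g_i}$ now contains the mode-one Jacobi fields $v_{\varepsilon_j^i}^{l,\pm}\phi_l$, $l=1,\dots,n$, in addition to the mode-zero ones. First I would dispose of the compact part: on $M_i^c=\Ss^n\setminus\bigcup_j B_{r_0}(q_j^i)$, elliptic regularity applied to $L_{g_i}u=\phi\in C^{0,\alpha}$ gives $u\in C^{4,\alpha}(M_i^c)$ with the natural bound, so the content is entirely the behaviour of $u$ on each half-cylinder $(0,\infty)\times\Ss^{n-1}$ attached at a puncture $q_j^i$. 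On such an end, the Jin--Xiong asymptotics \eqref{eq018} together with the expansion \eqref{trans_del_expansion2} show that the conformal factor of $g_i$ equals the Delaunay profile $v_{\varepsilon_j^i}(\,\cdot-T_j^i)$ up to a decaying error, so in cylindrical coordinates $L_{g_i}=\mathcal L_{\varepsilon_j^i}+E_j^i$, where $\mathcal L_{\varepsilon_j^i}$ is the Delaunay Jacobi operator \eqref{del_linearization2} and $E_j^i$ is a fourth order operator whose coefficients tend to zero as $t\to\infty$.

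Next I would recall the indicial analysis. Separating variables reduces $\mathcal L_{\varepsilon_j^i}$ to the fourth order ODEs $\mathcal L_{\varepsilon,l}$ with $T_\varepsilon$-periodic coefficients; the Floquet analysis of \cite{jesse2020} identifies the indicial roots $\Gamma_{\varepsilon_j^i}$ and shows that the indicial solutions attached to roots of modulus at most one are precisely the Jacobi fields $v_{\varepsilon_j^i}^{0,\pm}$ (root $0$, one bounded and one of linear growth) and $v_{\varepsilon_j^i}^{l,\pm}\phi_l$ for $l=1,\dots,n$ (roots $\mp1$). The hypothesis $1<\delta<\min_j\gamma_{\varepsilon_j^i,n+1}$ says exactly that $\delta$ lies strictly between the indicial root $1$ and the first indicial root exceeding it, so that $(-\delta,\delta)$ contains no indicial roots other than $-1,0,1$ and the span of all associated indicial solutions on the end at $q_j^i$ is exactly $\mathrm{Span}\{v_{\varepsilon_j^i}^{l,\pm}\phi_l:l=0,\dots,n\}$.

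Then I would run the peeling-off argument and patch. Writing $\mathcal L_{\varepsilon_j^i}u=\phi-E_j^i u$ on each end and using that the coefficients of $E_j^i$ decay, the right hand side lies in $C^{0,\alpha}_{\delta'}$ for some $\delta'<\delta$; the structure theorem for $\mathcal L_{\varepsilon_j^i}$ — no indicial roots between two admissible weights lets one lower the weight, and roots in between contribute their indicial solutions — then expresses $u$ as a finite linear combination of the model indicial solutions with roots in $(\delta',\delta)$ plus a remainder of weight $\delta'$, and since $\delta<\gamma_{\varepsilon_j^i,n+1}$ those roots all lie in $\{-1,0,1\}$, so the combination involves only the Jacobi fields $v_{\varepsilon_j^i}^{l,\pm}\phi_l$, $l=0,\dots,n$. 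Feeding the improved decay of the remainder back into $E_j^i u$ and iterating a bounded number of times, the remainder lands in $C^{4,\alpha}_{-\delta}$ on every end. Collecting the coefficients, I would set $w=\sum_{j=0}^{k_i}\sum_{l=0}^n\sum_{\pm}\alpha_{i,j}^{l,\pm}\,\chi_i v_{\varepsilon_j^i}^{l,\pm}\phi_l\in\mathcal W_{g_i}$ and $v=u-w$. Near each $q_j^i$, where $\chi_i\equiv1$, $v$ equals the $C^{4,\alpha}_{-\delta}$ remainder; on $M_i^c$ and on the support of $d\chi_i$, where $t$ is bounded, it is bounded in $C^{4,\alpha}$; and $v$ is smooth on all of $M_i$. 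Hence $v\in C^{4,\alpha}_{-\delta}(M_i)$ and $u=w+v$.

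The step I expect to be the main obstacle is this iteration, together with its interplay with the perturbation $E_j^i$. Because of the asymptotic translation parameters $a_j^i$, the coefficients of $E_j^i$ decay only at rate $e^{-t}$, exactly the rate of the mode-one indicial roots, and $E_j^i$ moreover couples Fourier modes on $\Ss^{n-1}$, so one cannot solve mode by mode with the full operator $L_{g_i}$; instead one centers the model at the exact translated Delaunay solution $v_{\varepsilon_j^i,a_j^i}$ — whose Jacobi fields differ from $v_{\varepsilon_j^i}^{l,\pm}\phi_l$ only by terms lying in $\mathrm{Span}\{v_{\varepsilon_j^i}^{0,\pm}\}+C^{4,\alpha}_{-\delta}$, hence harmless — and bootstraps the decay of $u$ finitely many times, using crucially that $(1,\delta)$ is free of indicial roots so that no growing indicial solution outside $\mathcal W_{g_i}$ is ever generated. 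This is precisely the argument of \cite{cjs2024}, carried through with the extra Fourier modes $l=1,\dots,n$.
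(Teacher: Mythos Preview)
The paper does not supply its own proof of this proposition: it is stated with the attribution ``Linear Decomposition --- \cite{cjs2024}'' and immediately used, so there is nothing to compare against directly. Your sketch is the standard route to such a statement --- localize to each cylindrical end, replace $L_{g_i}$ by the model Delaunay operator $\mathcal L_{\varepsilon_j^i}$ plus a decaying perturbation, invoke the Floquet/indicial analysis of \cite{jesse2020} to identify the solutions attached to the indicial roots in $(-\delta,\delta)=\{-1,0,1\}$, and bootstrap the remainder down to weight $-\delta$ --- and this is precisely the argument of \cite{cjs2024} with the enlarged deficiency space, as the paper itself remarks after Definition~\ref{definition-defi-space}. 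Your identification of the potential difficulty (the $e^{-t}$ decay of the perturbation coinciding with the mode-one indicial rate, and the resulting mode coupling) is apt and your proposed resolution --- centering the model on the translated Delaunay solution so that the perturbation decays faster than $e^{-t}$, or equivalently noting that after the normalization \eqref{eq001} one has $a_j^i=0$ and the perturbation decays at rate $e^{-\beta_j^i t}$ with $\beta_j^i>1$ --- is exactly what is done in practice. So the proposal is correct and in line with the cited source; there is no gap.
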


For $1 < \delta < \min\{ \gamma_{\varepsilon_j^i,n+1}:1 \leq j \leq k_i, i=1,2\} $ the operator 
$$L_{g_i}:\mathcal W_{g_i}\oplus C^{4,\alpha}_{-\delta}(M_i)\to C_{-\delta}^{0,\alpha}(M_i)$$
is surjective. Using elliptic regularity and the fact that $L_{g_i}$ is formally self-adjoint, 
the injectivity of $L_{g_i}:C^{4,\alpha}_{-\delta}(M_i)\to C^{0,\alpha}_{-\delta}(M_i)$ 
implies $L_{g_i}:C^{4,\alpha}_{\delta}(M_i)\to C^{0,\alpha}_{\delta}(M_i)$ is surjective. 
Thus, Proposition \ref{propo001} implies that for all $f\in C^{0,\alpha}_{-\delta}(M_i)\subset 
C^{0,\alpha}_{\delta}(M_i)$ there exists $u\in \mathcal W_{g_i}\oplus C^{2,\alpha}_{-\delta}(M_i)$ 
such that $L_{g_i}(u)=f$. The kernel $B_{g_i}$ of the map $L_{g_i}:\mathcal W_{g_i}\oplus 
C^{4,\alpha}_{-\delta}(M_i)\to C_{-\delta}^{0,\alpha}(M_i)$ is called the {\it bounded null space}. 
The dimension of $B_{g_i}$ is $(k_i+1)(n+1)$, that is, each end contributes $n+1$ to the dimension.

\begin{remark}\label{remark001}
Since $B_{g_i}\in\mathcal W_{g_i}\oplus C^{4,\alpha}_{-\delta}(M_i)$, we can consider the 
natural projection $\mathcal{P}:B_{g_i}\to\mathcal W_{g_i}$. If $u,v\in B_{g_i}$ have the same 
image $\mathcal{P}(u)=\mathcal{P}(v)$, then $L_{g_i}(u-v)=0$ and $u-v\in 
C^{4,\alpha}_{-\delta}(M_i)$. Under the assumption that $g_i$ is unmarked nondegenrate, we 
obtain that $u=v$. So the map $\mathcal{P}:B_{g_i}\to\mathcal W_{g_i}$ is injective. We will 
consider $\tilde B_{g_i}=\mathcal{P}(B_{g_i})\subset\mathcal W_{g_i}$. Note that 
$\mathcal{P}:B_{g_i}\to\tilde B_{g_i}$ is a bijective map between finite dimensional spaces.    
\end{remark}

\begin{lemma}\label{lem002}
    Suppose there exists a one-parameter family of conformal metrics $g_{1-\eta}$ on $M_1$ with 
    constant $Q$-curvature $n(n^2-4)/8$ and asymptotic necksize of $\varepsilon+\eta$ on the 
    end at $q_0^1$. Given $w\in\mathcal W_{g_1}$, there exists $\Phi_1\in B_{g_1}$ such that 
    near the point $q_0^1$ we have
    $$|\Phi_1(t,\theta)+w(t,\theta)|\leq ce^{-\delta t},$$
    where $1 < \delta < \min\{ \gamma_{\varepsilon_j^i,n+1}:1 \leq j \leq k_i, i=1,2\} $.
\end{lemma}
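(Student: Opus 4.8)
The plan is to reduce the claim to a finite-dimensional surjectivity statement about the deficiency data at the single puncture $q_0^1$, and then to verify that surjectivity by exhibiting enough explicit Jacobi fields in $B_{g_1}$; the hypothesis on the family $g_{1-\eta}$ will be used for exactly one of the $2(n+1)$ directions one needs. First I would write $\mathcal W_{g_1}=\mathcal W^{(0)}\oplus\bigoplus_{j=1}^{k_1}\mathcal W^{(j)}$, where $\mathcal W^{(j)}=\operatorname{Span}\{\chi_1 v_{\varepsilon_j^1}^{l,\pm}\phi_l:l=0,\dots,n\}$ is the summand attached to $q_j^1$, of dimension $2(n+1)$ (we may assume $\varepsilon_0^1<\overline\varepsilon$, so no mode degenerates). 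For a given $w\in\mathcal W_{g_1}$ only the component $w^{(0)}\in\mathcal W^{(0)}$ is relevant near $q_0^1$, the others being supported near the remaining punctures. By Proposition \ref{propo001}, applied on $M_1$ with $\delta$ below the indicial thresholds at every puncture of $M_1$ (including $q_0^1$), any $\Phi_1\in B_{g_1}$ has the form $\Phi_1=\mathcal P_0(\Phi_1)+\mathcal O(e^{-\delta t})$ near $q_0^1$, where $\mathcal P_0(\Phi_1)\in\mathcal W^{(0)}$ is the $q_0^1$-part of $\mathcal P(\Phi_1)$. Since a nonzero element of $\mathcal W^{(0)}$ decays no faster than $e^{-t}$, which is slower than $e^{-\delta t}$, the estimate $|\Phi_1+w|\le c\,e^{-\delta t}$ near $q_0^1$ holds if and only if $\mathcal P_0(\Phi_1)=-w^{(0)}$; hence it suffices to prove $\mathcal P_0:B_{g_1}\to\mathcal W^{(0)}$ is onto. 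After a rotation I may take $q_0^1$ to correspond to the origin in the Euclidean gauge, and after translating by $-a_0^1$ (reparametrizing $g_{1-\eta}$ by the corresponding $\eta$-dependent conformal maps, which do not change the necksize at $q_0^1$) I may assume $a_0^1=0$.

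Next I would exhibit $2(n+1)$ elements of $B_{g_1}$ whose $\mathcal P_0$-images span $\mathcal W^{(0)}$. Differentiating the action of a one-parameter subgroup of $\mathrm{Conf}(\Ss^n,\overset{\circ}{g})$ on solutions of $\mathcal N_{\overset{\circ}{g}}(U)=0$ produces a Jacobi field on $M_1$; such a deformation moves the punctures and changes their scaling and translation parameters but never their necksizes, so it lies in $B_{g_1}$. Using $a_0^1=0$ and the expansions \eqref{trans_del_expansion}, \eqref{trans_del_expansion2}, one computes the leading behavior at $q_0^1$: the dilations $x\mapsto e^sx$ (fixing the origin) give $V_{\mathrm{dil}}$ with $\mathcal P_0(V_{\mathrm{dil}})=c_0\,v_{\varepsilon_0^1}^{0,+}$, $c_0\neq0$; the special conformal transformations fixing the origin give, for $k=1,\dots,n$, fields $V_{\mathrm{sc},k}$ with $\mathcal P_0(V_{\mathrm{sc},k})=e_k\,v_{\varepsilon_0^1}^{k,+}\phi_k+a_k\,v_{\varepsilon_0^1}^{0,+}$, $e_k\neq0$; the translations $x\mapsto x+se_k$ give $V_{\mathrm{trans},k}$ with $\mathcal P_0(V_{\mathrm{trans},k})=c_k\,v_{\varepsilon_0^1}^{k,-}\phi_k+d_k\,v_{\varepsilon_0^1}^{k,+}\phi_k$, $c_k\neq0$. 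Finally, differentiating $g_{1-\eta}$ at $\eta=0$ gives $\Phi_\varepsilon\in B_{g_1}$ with $\mathcal P_0(\Phi_\varepsilon)=v_{\varepsilon_0^1}^{0,-}+\alpha\,v_{\varepsilon_0^1}^{0,+}$, the coefficient of $v_{\varepsilon_0^1}^{0,-}$ being $1$ precisely because the necksize at $q_0^1$ changes to first order.

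The remaining step is linear algebra. Within each plane $\operatorname{span}\{v_{\varepsilon_0^1}^{k,+}\phi_k,\,v_{\varepsilon_0^1}^{k,-}\phi_k\}$ the images $\mathcal P_0(V_{\mathrm{sc},k})$ and $\mathcal P_0(V_{\mathrm{trans},k})$ are independent (since $e_k,c_k\neq0$ and only the second has a growing part), so, solving the $\phi_k$-blocks first and then the $v^{0,+}$-slot using $c_0\neq0$, the vectors $V_{\mathrm{dil}},V_{\mathrm{sc},1},\dots,V_{\mathrm{trans},n}$ span the $(2n+1)$-dimensional subspace of $\mathcal W^{(0)}$ on which the $v_{\varepsilon_0^1}^{0,-}$-coefficient vanishes; adjoining $\Phi_\varepsilon$, whose $v_{\varepsilon_0^1}^{0,-}$-coefficient is nonzero, yields all of $\mathcal W^{(0)}$, so $\mathcal P_0$ is onto. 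Choosing $\Phi_1\in B_{g_1}$ with $\mathcal P_0(\Phi_1)=-w^{(0)}$ then gives, near $q_0^1$, $\Phi_1+w=(\Phi_1-\mathcal P_0(\Phi_1))+(w^{(0)}+\mathcal P_0(\Phi_1))=\mathcal O(e^{-\delta t})$, as claimed. I expect the main obstacle to lie in the second step: confirming that the conformal Jacobi fields really belong to $B_{g_1}$ with the stated leading terms, that after the normalization $a_0^1=0$ dilations excite only the $v^{0,+}$ mode and translations only the $\phi_k$ modes at $q_0^1$, and that $g_{1-\eta}$ can be reparametrized so that $\Phi_\varepsilon$ carries no $\phi_k$-component there — all of which rest on tracking how $\mathrm{Conf}(\Ss^n)$ acts on the asymptotic triple $(\varepsilon_0^1,T_0^1,a_0^1)$ of a puncture, together with \eqref{trans_del_expansion}, \eqref{trans_del_expansion2}.
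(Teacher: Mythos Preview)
Your proposal is correct and follows essentially the same strategy as the paper: both arguments produce $2(n+1)$ Jacobi fields in $B_{g_1}$ by differentiating (i) dilations, (ii) translations of the origin, (iii) Kelvin--translate--Kelvin maps (your ``special conformal transformations''), and (iv) the given family $g_{1-\eta}$, and then match the deficiency data at $q_0^1$. Your presentation is slightly more structured---framing the problem as surjectivity of $\mathcal P_0:B_{g_1}\to\mathcal W^{(0)}$ and allowing for off-diagonal terms that you then clear by a triangular argument---whereas the paper asserts the cleaner leading terms directly after the normalization $a_0^1=0$; but the content is the same.
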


\begin{proof} Let $g_{1-\eta}=F_{\eta}^{\frac{4}{n-4}}\mathring g$. Let $\Pi:\Ss^n
\backslash\{-q_0^1\}\to\R^n$ be the standard stereographic projection with pole at $-q_0^1$. 
We can suppose that $-q_0^1$ is a regular point of $g_{1-\eta}$ for all $\eta$. Then 
$(\Pi^{-1})^*g_{1-\eta}=f_{\eta}^{\frac{4}{n-4}}g_{\rm euc},$ where $f_\eta=
(F_\eta\circ \Pi^{-1})u_{sph}$ as in Remark \ref{gauge_remark}. By \eqref{eq018}, near the 
origin, which corresponds to $q_0^1$, as we did in \eqref{eq001}, we have
$$\mathfrak{F} (f_\eta  ) (t, \theta) = v_{\varepsilon_*+\eta} (t-T_0^1) + 
\mathcal{O}(e^{-\beta_\eta t}),$$
with $\beta_0=\beta_0^1>1$. Let \(w\in\mathcal{W}_{g_1}\) be an element of the deficiency space. 
Then \(w\) can be expressed as
\[w=\chi_1\sum_{j=0}^{k_1}\sum_{l=0}^n\alpha_{1,j}^{l,\pm}v_{\varepsilon_*}^{l,\pm}\phi_l,\]
where $\chi_1$ is as in Definition \ref{definition-defi-space}. Consider the following four 
families of functions. First, by assumption, we have the family of functions $F_{\eta}$ varying 
the asymptotic necksize of the Deluanay solution. Next, we have families obtained by applying 
the deformation $x\mapsto R^{\frac{n-4}{2}}f_\eta(Rx)$ and the Kelvin's transformation to $f_\eta$, 
as we did to obtain \eqref{deformed_del}. Geometrically, these two families correspond to 
translation of the necks along a Delaunay end and translation of the point at infinity, respectively. 
The final family of functions corresponds to translation at the origin. We denote these four 
families as follows, 
\begin{align*}
    &s\mapsto \mathfrak{F}(f_{\alpha_{1,0}^{0,-}s})\\
    &s\mapsto \mathfrak{F}(R(s)^{\frac{n-4}{2}}f_0(\cdot R(s))))\\
    &s\mapsto\mathfrak{F}(\mathbb{K} (\mathbb{K} (f_0) (\cdot - a(s)) )\\
    &s\mapsto\mathfrak{F}(f_0(\cdot+s\alpha_{1,0}^{l,-}e_l))
\end{align*}
where $R(0)=1, a(0)=0,$ and $\{e_l: 1\leq l\leq n\}$ forms a basis for $\R^{n}$. Let $R'(0)=\alpha_{1,0}^{0,+}$ and $a'(0)=(\alpha_{1,0}^{1,+},\ldots,\alpha_{1,0}^{n,+})$ and take the derivative of the families with respect to $s$ at $s=0$. Near the point $q_0^1$
\begin{align*}
    \frac{d}{ds}\mathfrak{F}(f_{\alpha_{1,0}^{0,-}s})&=\alpha_{1,0}^{0,-}v_{\varepsilon_*}^{0,-}+\mathcal{O}(e^{-\beta_0 t})\\
    \frac{d}{ds}\mathfrak{F}(R(s)^{\frac{n-4}{2}}f_0(\cdot R(s))))&=\alpha_{1,0}^{0,+}v_{\varepsilon_*}^{0,+}+\mathcal{O}(e^{-\beta_0 t})\\
    \frac{d}{ds}\mathfrak{F}(\mathbb{K} (\mathbb{K} (f_0) (\cdot - a(s)) )&=\sum\limits_{l=1}^{n}\alpha_{1,0}^{l,+}v_{\varepsilon_*}^{l,+}+\mathcal{O}(e^{-\beta_0 t})\\
    \frac{d}{ds}\mathfrak{F}(f_0(\cdot+s\alpha_{1,0}^{l,-}e_l))&=\alpha_{1,0}^{l,-}v_{\varepsilon_*}^{l,-}+\mathcal{O}(e^{-\beta_0 t})
\end{align*}

This implies that, near $q_0^1$, each derivative is asymptotic to $\alpha_{1,j}^{l,\pm}v_{\varepsilon_*}^{l,\pm}\phi_l$. See Section \ref{sec-jacobi-operator}. Combining all of these transformations, we obtain a single family $\Psi_s$ whose derivative at $s=0$ we define to be $\Phi_1$. Then $\Phi_1$ is asymptotic to $w$ near the point $q_0^1$.
\end{proof}

We are now in a position to prove the existence of a bounded right inverse for the linearized 
operator $L_{g_m}:\mathcal W_{g_m}\oplus C^{4,\alpha}_{-\delta}(M)\to C_{-\delta}^{0,\alpha}(M)$, 
for some $\delta>1$. Recall that
\begin{align*}
L_{g_m}(u) & =P_{g_m}(u)+\frac{n(n+4)(n^2-4)}{16}u\\
    & =\Delta_{g_m}^2u + \operatorname{div}_{g_m} \left (\frac{4}{n-2} 
    \operatorname{Ric}_{g_m} (\nabla u, \cdot) -
        \frac{(n-2)^2 + 4}{2(n-1)(n-2)} R_{g_m} du \right )\\
        & -\frac{n(n^2-4)}{2} u+\frac{n-4}{2}\psi u.
\end{align*}

\begin{proposition}\label{propo002}
    Suppose both $g_i$ are unmarked nondegenerate, and suppose there exists a one-parameter 
    family of conformal metrics $g_{1-\eta}$ on $M_1$ with constant $Q$-curvature $n(n^2-4)/8$ 
    and asymptotic necksize of $\varepsilon+\eta$ on the end at $q_0^1$. Then for $1 < \delta 
    < \min\{ \gamma_{\varepsilon_j^i,n+1}:1 \leq j \leq k_i, i=1,2\} $ there exists an $m_0>0$ 
    such that for all $m\geq m_0$ there exists an operator 
    \[G_{m}:C^{0,\alpha}_{-\delta}(M)\to \mathcal W_{g_m}\oplus C^{4,\alpha}_{-\delta}(M),\]
uniformly bounded in $m$, such that $L_{g_m}\circ G_m=Id$.
\end{proposition}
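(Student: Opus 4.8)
The plan is to build $G_m$ by a standard Cauchy-data matching / parametrix patching argument, gluing together the right inverses that already exist on the two summands $(M_i,g_i)$ and on the long Delaunay neck $\widehat{\mathbb{A}}_m$, and then correcting the resulting approximate inverse by a Neumann series. First I would fix a datum $f\in C^{0,\alpha}_{-\delta}(M)$ and decompose it with a partition of unity subordinate to the cover \eqref{eq022}: pieces $f_i$ supported in $M_i\setminus(\text{half of the neck})$ and a piece $f_{\mathbb{A}}$ supported in $\widehat{\mathbb{A}}_m$. On each $M_i$ we invert using surjectivity of $L_{g_i}:\mathcal W_{g_i}\oplus C^{4,\alpha}_{-\delta}(M_i)\to C^{0,\alpha}_{-\delta}(M_i)$ (the consequence of Proposition \ref{propo001} together with unmarked nondegeneracy recorded just before Remark \ref{remark001}), obtaining $u_i=w_i+v_i$. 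On the neck we use the explicit Jacobi ODE analysis of Section \ref{sec-jacobi-operator}: since $\delta$ lies strictly between consecutive indicial roots ($1<\delta<\gamma_{\varepsilon_*,n+1}$) and the neck has length $\approx (m+1/2)T_{\varepsilon_*}\to\infty$, the model operator $\mathcal L_{\varepsilon_*}$ on a long finite cylinder admits an inverse into the weighted space $C^{4,\alpha}_\delta(\widehat{\mathbb{A}}_m)$ with norm bounded \emph{uniformly in $m$} (this is the usual fact that on $[-L,L]\times\Ss^{n-1}$ the weight $\cosh^\delta s/\cosh^\delta L$ is comonotone with the indicial growth rates, so no resonances appear and the bound is $L$-independent). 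The metric $g_m$ differs from $g_{\varepsilon_*}$ on the neck only by $\mathcal O(e^{-\beta m T_{\varepsilon_*}})$ (Lemma \ref{lem001}, \eqref{annulus_metric3}), so $L_{g_m}$ on the neck is an $o(1)$ perturbation of the model and remains invertible with uniform bounds for $m$ large.

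The main work is then assembling these local solutions into a global approximate solution $\widetilde G_m(f):=\sum_i\eta_i u_i+\eta_{\mathbb{A}}u_{\mathbb{A}}$, where $\{\eta_i,\eta_{\mathbb{A}}\}$ is a second partition of unity whose supports sit well inside those of the first, so that $L_{g_m}\widetilde G_m(f)=f+E_m(f)$ where the error $E_m(f)$ comes only from commutators $[L_{g_m},\eta_\bullet]$ supported in the transition regions. Crucially, each transition region lies a fixed cylindrical distance of order $mT_{\varepsilon_*}$ into a Delaunay-type end where the local solution decays like $e^{-\delta t}$ (on the $M_i$ side, $v_i\in C^{4,\alpha}_{-\delta}$; on the neck side, $u_{\mathbb{A}}\in C^{4,\alpha}_\delta(\widehat{\mathbb{A}}_m)$, which in the neck weight also contributes a factor $\cosh^\delta$ evaluated away from the center). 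Hence $\|E_m(f)\|_{C^{0,\alpha}_{-\delta}(M)}\le C e^{-c m T_{\varepsilon_*}}\|f\|_{C^{0,\alpha}_{-\delta}(M)}$, so $\|E_m\|<\tfrac12$ for $m\ge m_0$, and $G_m:=\widetilde G_m\circ(\mathrm{Id}+E_m)^{-1}=\widetilde G_m\sum_{j\ge0}(-E_m)^j$ is the desired genuine right inverse, bounded uniformly in $m$ since $\widetilde G_m$ is.

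One subtlety I would be careful about is the role of Lemma \ref{lem002} and the bounded null space $B_{g_i}$: when I invert on $M_1$ the deficiency component $w_1\in\mathcal W_{g_1}$ includes a contribution on the gluing end $q_0^1$, which is \emph{not} part of $\mathcal W_{g_m}$ (that end has been cut off). Lemma \ref{lem002} lets me absorb this unwanted $w_1$-component at $q_0^1$ into an element $\Phi_1\in B_{g_1}$ — i.e. into an honest Jacobi field of $g_1$ coming from a geometric deformation (varying the necksize, using the hypothesized one-parameter family $g_{1-\eta}$, plus translations) — so that $u_1+\Phi_1$ has only $e^{-\delta t}$ growth down the $q_0^1$ end and therefore glues cleanly to the neck solution in $C^{4,\alpha}_{-\delta}$. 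This is exactly why the one-parameter family deforming the necksize of $(M_1,g_1)$ appears in the hypotheses. I would also need to check that the matching is consistent on \emph{both} sides of the neck: on the $q_0^2$ side one similarly uses the bounded null space $B_{g_2}$, but here no extra necksize-deformation hypothesis is needed because the neck solution $u_{\mathbb{A}}$ can be chosen to kill the relevant Cauchy data directly — the asymmetry in the hypotheses (only $g_1$ needs the family) matches the asymmetry in the construction.

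\smallskip
The step I expect to be the genuine obstacle is establishing the \textbf{uniform-in-$m$} bound for the inverse on the long neck, together with the correct bookkeeping of weights at the two interfaces. The model inverse on $[-L,L]\times\Ss^{n-1}$ with $L\to\infty$ is bounded independently of $L$ precisely because $\delta$ avoids the indicial roots and the mode-by-mode Green's functions satisfy uniform estimates; but translating this into the specific weighted $C^{4,\alpha}_\delta(\widehat{\mathbb{A}}_m)$ norm of the Definition (with its $\cosh^\delta(mT_\varepsilon)/\cosh^\delta s$ prefactor and the matching $e^{-\delta t_j}$ weights on the $M_i$ ends) and then verifying that the commutator errors really are geometrically small of order $e^{-cmT_{\varepsilon_*}}$ in the \emph{target} weighted norm requires care about which weight dominates in each transition annulus. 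Everything else — surjectivity on the summands, the Neumann series, the role of $B_{g_i}$ — is by now routine in this circle of gluing arguments (cf.\ \cite{MPU,MR1712628,cjs2024}).
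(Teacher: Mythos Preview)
Your overall architecture (local parametrices patched together, then Neumann series) is correct and matches the paper's, but your decomposition is more elaborate than necessary and your explanation of the asymmetry in the hypotheses contains a genuine error.

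The paper does \emph{not} use a three-piece decomposition with a separate neck inverse. It splits $f$ into only two pieces, $\chi f$ and $(1-\chi)f$, solves each on the full summand $M_i$ via surjectivity of $L_{g_i}:\mathcal W_{g_i}\oplus C^{4,\alpha}_{-\delta}(M_i)\to C^{0,\alpha}_{-\delta}(M_i)$, corrects the unwanted deficiency components on the gluing ends, and glues with cutoffs $\eta_1,\eta_2$. No model inverse on $\widehat{\mathbb{A}}_m$ enters the argument for Proposition~\ref{propo002} (such an operator $H_{T,\varepsilon}$ does appear later, in Section~\ref{sec:nondegen}, for a different purpose). So your worry about the uniform-in-$m$ bound for a neck inverse is moot here.

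The real gap is in your handling of the $q_0^2$ side. You write that one ``similarly uses the bounded null space $B_{g_2}$'' and that the neck solution ``can be chosen to kill the relevant Cauchy data directly.'' Neither is what happens, and neither works as stated. Without a one-parameter family varying the necksize of $g_2$ there is no analogue of Lemma~\ref{lem002} for $g_2$: conformal transformations alone produce $v_{\varepsilon_*}^{0,+}$ and $v_{\varepsilon_*}^{l,\pm}$ for $l\ge1$, but not the necksize direction $v_{\varepsilon_*}^{0,-}$, so $B_{g_2}$ need not contain an element matching the $\alpha_{2,0}^{0,-}v_{\varepsilon_*}^{0,-}$ component of $w_2$. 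And a neck inverse into a weighted space has no free Cauchy data to match. The paper's resolution is that \emph{both} correctors $\Phi_1$ and $\Phi_2$ are taken from $B_{g_1}$: since $g_1$ and $g_2$ share the same Delaunay asymptote $v_{\varepsilon_*}$ on the gluing ends, the deficiency profile $\sum_l\alpha_{2,0}^{l,\pm}v_{\varepsilon_*}^{l,\pm}(\tau)\phi_l$ of $w_2$ at $q_0^2$ is, under the identification \eqref{approx_soln_identification}, the same combination of Jacobi fields that Lemma~\ref{lem002} (applied to $g_1$) can cancel near $q_0^1$. One then sets $\hat G_m(f)=\eta_1(w_1+v_1+\Phi_1)+\eta_2\Phi_2+\eta_2(w_2+v_2)$ and checks the two estimates (i) boundedness and (ii) $\|L_{g_m}\hat G_m(f)-f\|\le\tfrac12\|f\|$ for $m$ large. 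This is precisely why only $(M_1,g_1)$ needs the necksize-deforming family: Lemma~\ref{lem002} is invoked twice, both times for $g_1$.
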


\begin{proof}
First, the fact that $L_{g_i}:\mathcal W_{g_i}\oplus C^{4,\alpha}_{-\delta}(M_i)\to 
C_{-\delta}^{0,\alpha}(M_i)$ is surjective, implies that this map has a bounded right inverse. 

Let $f\in C^{0,\alpha}_{-\delta}(M)$. Consider the cutoff function $\chi$ as defined 
in \eqref{eq003}, {\it i.e.} $\chi\equiv 1$ in $M_1\backslash 
B_{r_0e^{-T_0^1-(m+1/4)T_{\varepsilon_*}}}(q_0^1)$ and $\chi\equiv 0$ in 
$B_{r_0e^{-T_0^1-(m+3/4)T_{\varepsilon_*}}}(q_0^1)$.  
See Figure \ref{fig-cutoff-functions} for a sketch of the cutoff function $\chi$. Consider $\chi f$ 
as a function in $M_1$, extended as zero. Thus, there exists $w_1+v_1\in \mathcal W_{g_1}\oplus 
C^{4,\alpha}_{-\delta}(M_1)$ such that 
\begin{equation}\label{eq016}
    L_{g_1}(w_1+v_1)=\chi f
\end{equation}
and
\begin{equation}\label{eq012}
    \|w_1+v_1\|_{\mathcal W_{g_1}\oplus C^{4,\alpha}_{-\delta}(M_1)}\leq c\|\chi f\|_{C^{0,\alpha}_{-\delta}(M_1)}.
\end{equation}

 Since $w_1\in\mathcal W_{g_1}$, by the definition of the deficiency space 
 (Definition \ref{definition-defi-space}), in $B_{r_0}(q_0^1)$ we have that $w_1(t,\theta)$ is 
 asymptotic to $\sum\limits_{l=0}^n \alpha_{1,0}^{l,\pm}v_{\varepsilon_*}^{l,\pm}(t)
 \phi_l(\theta)$. By Lemma \ref{lem002} we can find $\Phi_1\in B_{g_1}$ such that 
 $\Phi_1+w_1= O(e^{-\delta t})$. By \eqref{eq012}, the fact that $B_{g_1}$ is canonically 
 identified with $\tilde B_{g_1}$ (Remark \ref{remark001}) and that both are finite 
 dimensional, we obtain
$$|w_1+v_1+\Phi_1|(t,\theta)\leq c\|\chi f\|_{C^{0,\alpha}_{-\delta}(M_1)} e^{-\delta t}$$
in $B_{r_0}(q_0^1)$, and
\begin{equation}\label{eq014}
     \|\Phi_1\|_{\mathcal W_{g_1}\oplus C^{4,\alpha}_{-\delta}(M_1)}\leq 
     c\|\chi f\|_{C^{0,\alpha}_{-\delta}(M_1)}.
\end{equation}

Similarly there exists $w_2+v_2\in\mathcal W_{g_2}\oplus C^{4,\alpha}_{-\delta}(M_2)$ such that
\begin{equation}\label{eq017}
    L_{g_2}(w_2+v_2)=(1-\chi) f
\end{equation}
and
\begin{equation}\label{eq013}
    \|w_2+v_2\|_{\mathcal W_{g_2}\oplus C^{4,\alpha}_{-\delta}(M_2)}\leq 
    c\|(1-\chi )f\|_{C^{0,\alpha}_{-\delta}(M_2)}.
\end{equation}

Here we will use Lemma \ref{lem002} again. By definition, at a small ball centered at the 
point $q_0^2$, we have
$$w_2(\tau,\theta)=\sum\limits_{l=0}^n \alpha_{2,0}^{l,\pm}v_\varepsilon^{l,\pm}(\tau)\phi_l(\theta).$$ 
Since the metrics $g_1$ and $g_2$ are asymptotic to the same Delaunay metric on 
$\hat{\mathbb{A}}_m$, there exists $\Phi_2\in B_{g_1}$ such that 
\begin{equation}\label{eq004}
\Phi_2(t,\theta)+\sum\alpha_{2,0}^{l,\pm}v_\varepsilon^{l,\pm}(t)\phi_l(\theta)=O(e^{-\delta t}).    
\end{equation}
By \eqref{eq013} and again the fact that $B_{g_1}$ is canonically identified with 
$\tilde B_{g_1}$ we obtain
\begin{equation}\label{eq015}
     \|\Phi_2\|_{\mathcal W_{g_1}\oplus C^{4,\alpha}_{-\delta}(M_1)}\leq 
     c\|(1-\chi) f\|_{C^{0,\alpha}_{-\delta}(M_2)}.
\end{equation}

Now consider two cutoff functions $\eta_1$ and $\eta_2$ defined in $M$ (See 
Figure \ref{fig-cutoff-functions}) such that
$$\eta_1(p)=\begin{cases}
    1, & p\in M_1\backslash \B_{r_0e^{-(T_0^1+(m+3/4)T_{\varepsilon_*})}}(q_0^1)\\
    0, & p\in M_2\backslash \B_{r_0e^{-(T_0^2+mT_{\varepsilon_*})}}(q_0^2)
\end{cases}$$
and
$$\eta_2(p)=\begin{cases}
    1, & p\in M_1\backslash \B_{r_0e^{-(T_0^2+(m+3/4)T_{\varepsilon_*})}}(q_0^2)\\
    0, & p\in M_1\backslash \B_{r_0e^{-(T_0^1+mT_{\varepsilon_*})}}(q_0^1)
\end{cases}$$
Define the operator $\hat G_m:C^{0,\alpha}_{-\delta}(M)\to \mathcal W_{g_m}\oplus 
C^{4,\alpha}_{-\delta}(M)$  by 
$$\hat{G}_m(f)=\eta_1(w_1+v_1+\Phi_1)+\eta_2\Phi_2+\eta_2(w_2+v_2).$$

\begin{figure}[!ht]
\begin{center}
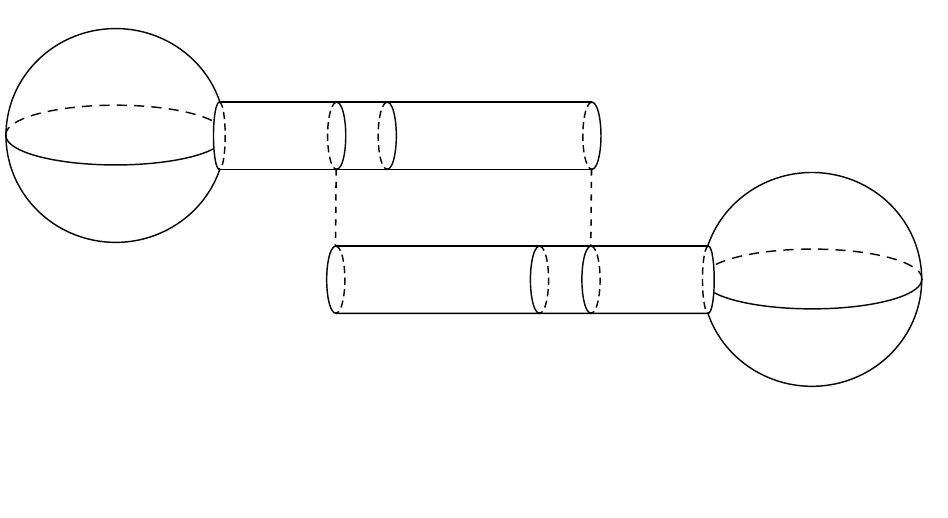
\caption{The punctured ball $B_{r_0}(q_0^i)\backslash\{q_0^i\}$ in $\Ss^n$ is identified with 
the half cylinder $(0,+\infty)\times\Ss^{n-1}$.}\label{fig-cutoff-functions}
\end{center}
\end{figure}

The result will follow after we prove the following two estimates
\begin{enumerate}
    \item[(i)] $\|\hat G_m(f)\|_{\mathcal W_{g_m}\oplus C^{4,\alpha}_{-\delta}(M)}\leq 
    c\|f\|_{C^{0,\alpha}_{-\delta}(M)}$,
    \item[(ii)] $\|L_{g_m}\circ\hat  G_m(f)-f\|_{C^{0,\alpha}_{-\delta}(M)}\leq 
    \frac{1}{2}\|f\|_{C^{0,\alpha}_{-\delta}(M)}$,
\end{enumerate}
for $m$ sufficiently large. While (i) shows us that $\hat G_m$ is bounded, the inequality 
(ii) implies that $L_{g_m}\circ \hat G_m$ has a bounded right inverse given by
$(L_{g_m}\circ \hat G_m)^{-1}:=\sum_{i=0}^\infty(I- L_{g_m}\circ\hat G_m)^i,$
with norm $\|(L_{g_m}\circ \hat G_m)^{-1}\|\leq 1$. Therefore, a uniformly bounded right 
inverse of $L_{g_m}$ is $G_m:=\hat G_m\circ (L_{g_m}\circ \hat G_m)^{-1}$.

The inequality (i) follows by \eqref{eq012}, \eqref{eq014}, \eqref{eq013} and \eqref{eq015}. 
To show (ii),  observe that in $M_i\backslash \B_{r_0 e^{-(T_0^i+mT_{\varepsilon_*})}}(q_0^i)$
 we have $\eta_i\equiv 1$ and the other cutoff function $\eta$ is identically zero. Moreover, within 
 this region, the cutoff function $\chi$ is identically $0$ or $1$. This directly implies that
 $L_{g_m}\circ\hat G_m(f)=f$ in these regions. In the annulus 
 $\B_{r_0e^{-(T_0^1+(m+1/4)T_{\varepsilon_*})}}(q_0^1)\backslash 
 \B_{r_0e^{-(T_0^1+(m+3/4)T_{\varepsilon_*})}}(q_0^1)\sim 
 \B_{r_0e^{-(T_0^2+(m+1/4)T_{\varepsilon_*})}}(q_0^2)\backslash 
 \B_{r_0e^{-(T_0^2+(m+3/4)T_{\varepsilon_*})}}(q_0^2)$, where $\eta_1$ and $\eta_2$ are 
 both equal to 1, we have
$$\hat{G}_m(f)(t,\theta)=(w_1+v_1+\Phi_1+\Phi_2)(t,\theta)+(w_2+v_2)(\tau,\theta),$$
where $t$ and $\tau$ are related by \eqref{approx_soln_identification}. 
By \eqref{eq001}, \eqref{annulus_metric3}, \eqref{eq016} and \eqref{eq017} we obtain
$$\| L_{g_m}\circ\hat G_m(f)-f\|_{C^{4,\alpha}_\delta(M)}=
\mathcal{O}(e^{-m\gamma})\|f\|_{C^{4,\alpha}_\delta(M)},$$
for some $\gamma>0$. It remains to estimate in the regions which corresponds 
to $t\in[T_0^1+mT_{\varepsilon_*}, T_0^1+(m+1/4)T_{\varepsilon_*}]\cup
[T_0^1+(m+3/4)T_{\varepsilon_*},T_0^1+(m+1)T_{\varepsilon_*}]$, where $\nabla\eta_1$, 
$\nabla\eta_2$ and $\nabla\chi$ are nonzero. In the region 
$t\in[T_0^1+mT_{\varepsilon_*}, T_0^1+(m+1/4)T_{\varepsilon_*}]$, we have
$$\hat{G}_m(f)(t,\theta)=(w_1+v_1+\Phi_1)(t,\theta)+\eta_2(\tau)(w_2+v_2+\Phi_2(t,\cdot))(\tau,\theta),$$
where $t$ and $\tau$ are related by \eqref{approx_soln_identification}. Using \eqref{eq004}, 
this implies that
\begin{align*}
    |L_{g_m}\circ \hat G_m(f)-f|(t,\theta) & =|L_{g_m}(\eta_2(w_2+v_2+
    \Phi_2(t,\cdot))|(\tau,\theta) \leq Ce^{-m\gamma} \|f\|_{C^{4,\alpha}_\delta(M)}.
\end{align*}
One can similarly estimate $|L_{g_m}\circ \hat G_m(f)-f|$ in the region 
$t\in[T_0^1+(m+3/4)T_{\varepsilon_*}, T_0^1+(m+1)T_{\varepsilon_*}]$. This finishes the proof.
\end{proof}

\section{Nonlinear Analysis} \label{sec:nonlinear} 
\subsection{The geometric deformations}

The approximate solution $g_m$ is conformal to the spherical metric outside of $\mathbb A_m$. So, 
for each $i\in\{1,2\}$ and $j\in\{1,\ldots,k_i\}$, in $\B_{r_0}(q_j^i)\backslash\{q_j^i\}$, we can 
write $g_m=U_{ij}^{\frac{4}{n-4}}\mathring g=u_{ij}^{\frac{4}{n-4}}g_{\rm euc}=g_i$. By 
expansion \eqref{asymp_emden_fowler}, there exists parameters $\varepsilon_j^i>0$, $T_j^i\in\R$ 
and $a_j^i\in\R^n$ such that near the point $q_j^i$, in normal coordinates centered at $q_j^i$, it holds
$$u_{ij}=u_{\varepsilon_j^i,\exp(T_j^i), a_j^i} + v_j^i,$$
with $|v_j^i(x)|=\mathcal{O}(|x|^{\frac{4-n}{4}+\beta_j^i})$, for some $\beta_j^i>1$. Note here that 
we are using lower case for normal coordinates since it is analogous to $\R^n$.
 
For $w\in \mathcal{W}_{g_m}$ with $\alpha_{i,j}^{0,-}$ small, $w$ satisfying \eqref{eq005}, we 
define a function $\widetilde u_{ij}$ in $\B_{r_0}(q_j^i)\backslash\{q_j^i\}$ as
$$\widetilde u_{ij}(x)=\begin{cases}
    u_{ij}(x), & \mbox{ if } |x|\geq r_0/2\\
    (1-\eta(x))u_{\varepsilon_j^i,\exp(T_j^i), a_j^i}(x)+\\
    +\eta(x)u_{\varepsilon_j^i+\alpha_{i,j}^{0,-},\exp(T_j^i+\alpha_{i,j}^{0,+}), 
    a_j^i+\alpha_{i,j}^+}(x+\alpha_{i,j}^-)+ v_j^i(x), &\mbox{ if } r_0/4\leq |x|\leq r_0/2\\
  u_{\varepsilon_j^i+\alpha_{i,j}^{0,-},\exp(T_j^i+\alpha_{i,j}^{0,+}), 
  a_j^i+\alpha_{i,j}^+}(x+\alpha_{i,j}^-) + v_j^i(x),  &\mbox{ if } 0<|x|\leq r_0/4,
\end{cases}$$
where  $\alpha_{i,j}^\pm=(\alpha_{i,j}^{l,\pm})\in\R^{n}$, and $\eta$ is a 
smooth cutoff function identically zero on $\B_{\frac{r_0}{4}}(0)$ and 
identically equal to one outside of $B_{\frac{r_0}{2}}(0)$ in normal 
coordinates centered at $q_j^i$. Thus, we define a metric $g_m(w)$ in $M$ 
such that $g_m(w)=g_m$ outside the balls $B_{r_0}(q_j^i)$, and $g_m(w)
=\widetilde u_{ij}^{\frac{4}{n-4}}g_{\rm euc}$ on the punctured balls 
$B_{r_0}(q_j^i)\backslash\{q_j^i\}$. Geometrically this denotes first 
deforming the neck size from $\varepsilon_j^i$ to $\varepsilon_j^i 
+\alpha_{i,j}^{0,-}$ and then translating along the Delaunay end by 
$\alpha_{i,j}^{0,+}+T_j^i$. This is followed by transferring to $\R^n$ 
via stereographic projection, translating the origin by $\alpha_{i,j}^-$, 
and translating infinity by $\alpha_{i,j}^+$. Note that when $w=0$ we have 
exactly $g_m(0)=g_m.$ Note that the singular points will be moved in this 
procedure if any of the $\alpha_{i,j}^{l,\pm}$ is nonzero for any $l\not=0$. 
We sketch the effect of these geometric deformation in 
Figure \ref{fig-geometric-deformations}.

\begin{figure}[!ht]
\begin{center}
\begingroup%
  \makeatletter%
  \providecommand\color[2][]{%
    \errmessage{(Inkscape) Color is used for the text in Inkscape, but the package 'color.sty' is not loaded}%
    \renewcommand\color[2][]{}%
  }%
  \providecommand\transparent[1]{%
    \errmessage{(Inkscape) Transparency is used (non-zero) for the text in Inkscape, but the package 'transparent.sty' is not loaded}%
    \renewcommand\transparent[1]{}%
  }%
  \providecommand\rotatebox[2]{#2}%
  \newcommand*\fsize{\dimexpr\f@size pt\relax}%
  \newcommand*\lineheight[1]{\fontsize{\fsize}{#1\fsize}\selectfont}%
  \ifx\svgwidth\undefined%
    \setlength{\unitlength}{261.78544893bp}%
    \ifx\svgscale\undefined%
      \relax%
    \else%
      \setlength{\unitlength}{\unitlength * \real{\svgscale}}%
    \fi%
  \else%
    \setlength{\unitlength}{\svgwidth}%
  \fi%
  \global\let\svgwidth\undefined%
  \global\let\svgscale\undefined%
  \makeatother%
  \begin{picture}(1,0.75729648)%
    \lineheight{1}%
    \setlength\tabcolsep{0pt}%
    \put(0,0){\includegraphics[width=\unitlength,page=1]{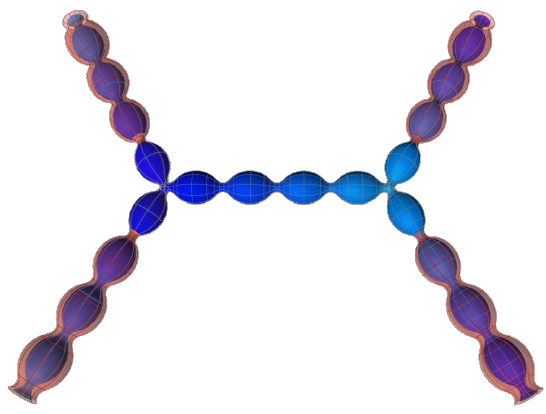}}%
    \put(0.49464094,0.17432046){\color[rgb]{0,0,0}\makebox(0,0)[t]{\lineheight{1.25}\smash{\begin{tabular}[t]{c}deformed Delaunay\\asymptotes\end{tabular}}}}%
    \put(0.50485891,0.64524297){\color[rgb]{0,0,0}\makebox(0,0)[t]{\lineheight{1.25}\smash{\begin{tabular}[t]{c}transition regions\\where $Q_g\not=\frac{n(n^2-4)}{8}$\end{tabular}}}}%
    \put(0,0){\includegraphics[width=\unitlength,page=2]{desenhando.pdf}}%
  \end{picture}%
\endgroup%

\caption{Deforming a metric by an element of the deficiency space.}\label{fig-geometric-deformations}
\end{center}
\end{figure}

The nonlinear equation we now wish to solve is 
\begin{equation} \label{goal_eqn} 
0 = \mathcal{N} (w,v) = \mathcal{N}_{g_m(w)} (1+v) .\end{equation}  
Observe that, since in general the puncture points move, the two metrics 
$g_m = g_m(0)$ and $g_m (w)$ will not be conformal. However, by our construction we 
have 
$$\left. \frac{d}{dt} \right|_{t=0} \mathcal{N}_{g_m(tw)}(1+tv)
= \mathcal{L}_{g_m} (w) + \mathcal{L}_{g_m} (v) = \mathcal{L}_{g_m}(w+v).$$
Thus we can write \eqref{goal_eqn} as 
\begin{eqnarray} \label{eq006}
0 & = & \mathcal{N}_{g_m(w)} (1+v) = (\mathcal{N}_{g_m(w)}(1+v) - \mathcal{N}_{g_m}(1+v))
+ \mathcal{N}_{g_m}(1+v) \\ \nonumber 
& = & \mathcal{L}_{g_m}(w) + \mathcal{R}_1 + \mathcal{N}_{g_m}(1+v) = 
\mathcal{L}_{g_m}(w+v) + \psi + \mathcal{R}_1 + \mathcal{R}_2,
\end{eqnarray} 
where $\psi$ is given by Lemma \ref{lem001} and $\mathcal{R}_1$ are 
$\mathcal{R}_2$ are quadratically small remainder terms. 

\subsection{Solving the gluing problem with a contraction}\label{sec-fixed-point}
We wish to find \((w,v)\in {\mathcal{W}_{g_m}\oplus C^{4,\alpha}_{-\delta}(M)}\), such 
that the $Q$-curvature of \(g=(1+v)^{\frac{4}{n-4}}g_m(w)\) is equal to the 
constant \(n(n^2-4)/8\). To do 
that we will consider the map $\mathcal N:\mathcal B_r\to C^{0,\alpha}_{-\delta}(M)$ given 
by \eqref{eq006}, where $\mathcal B_r\subset \mathcal W_{g_m}\oplus C^{4,\alpha}_{-\delta}(M)$ 
is a small ball centered at $(0,0)$ with radius $r$.

Using Taylor expansion we can decompose
\begin{equation}\label{eq21}
    \mathcal N(w,v)=\mathcal N(0,0)+\mathcal{L}(w,v)+\mathcal R(w,v),
\end{equation}
where \(\mathcal{L}(w,v)\) is the linearization of \(\mathcal N(w,v)\) and \(\mathcal R(w,v)\) is 
the remainder. By \eqref{eq006} we have $\mathcal L(w,v)=L_{g_m}(w+v)+\frac{n-4}{2}\psi w$. Since 
$\psi$ has compact support in $\mathbb{A}_m$, it follows by Lemma \ref{lem001}, 
Proposition \ref{propo002} and a perturbation argument that $\mathcal L:\mathcal W_{g_m}
\oplus C^{4,\alpha}_{-\delta}(M)\to C^{0,\alpha}_{-\delta}(M)$ has a uniformly bounded right 
inverse for all $m$ large enough, which we will denote as $\mathcal G$. This implies that the 
problem is equivalent to finding a fixed point of the operator $\mathcal K_m:\mathcal 
B_r\to \mathcal B_r$ given by
\begin{equation}\label{eq008}
    \mathcal K_m(w,v)=-\mathcal G(\mathcal N(0,0)+\mathcal{R}(w,v)).
\end{equation}

We will show that $\mathcal K_m$ is well defined, i.e., for all $(w,v)\in \mathcal B_r$, the 
right hand side of \eqref{eq008} belongs to $\mathcal B_r$, and also that $\mathcal K_m$ 
is a contraction. To do that it is enough to show that 
\begin{equation}\label{eq009}
    \|\mathcal K_m(0,0)\|\leq\frac{1}{2}r
\quad\mbox{ and }\quad \|\mathcal K_m(w_1, v_1)-\mathcal K_m(w_0, v_0)\|\leq 
\frac{1}{2}\|(w_1,v_1)-(w_0,v_0)\|.
\end{equation}
 In fact, if this is the case, then for all $(w,v)\in\mathcal B_r$ we have
$$\|\mathcal K_m(w,v)\|\leq \|\mathcal K_m(0,0)\|+\|\mathcal K_m(0,0)-\mathcal K_m(w,v)\|\leq r.$$

To show the first inequality in \eqref{eq009}, recall that $\mathcal G$ is bounded 
independently of $m$. Note that by \eqref{paneitz-branson}, \eqref{eq020} and  \eqref{eq006} 
we have $\mathcal N(0,0)=\frac{n-4}{2}\psi$. Thus, by Lemma \ref{lem001} we get
$$\|\mathcal K_m(0,0)\|\leq C\|\mathcal N(0,0)\|\leq C\|\psi\|\leq Ce^{-m\beta T_{\varepsilon_*}}.$$

Also, we have
$$\|\mathcal K_m(w_1, v_1)-\mathcal K_m(w_0, v_0)\|\leq 
C\|\mathcal R(w_1, v_1)-\mathcal R(w_0, v_0)\|$$

Now, using \eqref{eq21} and the Fundamental Theorem of Calculus, we express the remainder
\begin{eqnarray*}
    \mathcal R(w_1, v_1)-\mathcal R(w_0, v_0)
    -\mathcal{L}(u_1-u_0) & = & \int_0^1\left(\frac{d}{dt}\mathcal N(u_t)-\mathcal{L}(u_1-u_0)\right)dt\\
   &=& \int_0^1\int_0^1 \frac{d}{ds}\mathcal L^{su_t}((w_1,v_1)-(w_0, v_0)) ds dt.
\end{eqnarray*}
Here, $u_t=(w_1,v_1)+(1-t)((w_0, v_0)-(w_1, v_1))$ and $\mathcal L^{u_t}(u_1-u_0)
=\frac{d}{dt}\mathcal N(u_t)$. This implies that 
$$\|\mathcal R(u_1)-\mathcal R(u_0)\|\leq C\|u_1-u_0\|^2\leq C\max\{\|u_0\|,\|u_1\|\}\|u_1-u_0\|,$$
giving us the following proposition as an immediate consequence. 

\begin{proposition}\label{propo003}
   There exist $m_0>0$ and $r_0>0$ such that for all integer $m>m_0$ the map     
   $\mathcal K_m:\mathcal B_r\to \mathcal B_r$ given by \eqref{eq008}
    is a contraction. Consequently, \(\mathcal K_m\) has a fixed point $u\in\mathcal B_r$ 
    with norm bounded by $Ce^{-m\beta T_{\varepsilon_*}}$.
\end{proposition}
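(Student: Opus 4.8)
The plan is to establish the two estimates in \eqref{eq009}, since the proposition follows immediately from the Banach fixed point theorem once we know $\mathcal{K}_m$ maps $\mathcal{B}_r$ into itself and is a contraction. I would fix the weight $1 < \delta < \min\{\gamma_{\varepsilon_j^i,n+1}\}$ and the radius $r$ to be a small multiple of $e^{-m\beta T_{\varepsilon_*}}$ (say $r = 2Ce^{-m\beta T_{\varepsilon_*}}$ with $C$ the bound on $\mathcal{G}$), and show both inequalities hold once $m$ is large enough. The first inequality, $\|\mathcal{K}_m(0,0)\| \leq \tfrac12 r$, is essentially already in hand: by construction $\mathcal{N}(0,0) = \tfrac{n-4}{2}\psi$, so Lemma \ref{lem001} gives $\|\mathcal{N}(0,0)\| = \mathcal{O}(e^{-m\beta T_{\varepsilon_*}})$, and applying the uniformly bounded right inverse $\mathcal{G}$ yields $\|\mathcal{K}_m(0,0)\| \leq C e^{-m\beta T_{\varepsilon_*}} \leq \tfrac12 r$ for our choice of $r$.

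For the contraction estimate I would use that $\mathcal{K}_m(w_1,v_1) - \mathcal{K}_m(w_0,v_0) = -\mathcal{G}(\mathcal{R}(w_1,v_1) - \mathcal{R}(w_0,v_0))$, so by boundedness of $\mathcal{G}$ it suffices to control $\|\mathcal{R}(w_1,v_1) - \mathcal{R}(w_0,v_0)\|$. Writing $u_t$ for the segment joining the two points and using the fundamental theorem of calculus twice, as in the displayed computation preceding the proposition, one sees that this difference is controlled by the second variation of $\mathcal{N}$ along the segment, which is quadratically small: one obtains $\|\mathcal{R}(u_1) - \mathcal{R}(u_0)\| \leq C \max\{\|u_0\|, \|u_1\|\} \|u_1 - u_0\|$. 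Since $u_0, u_1 \in \mathcal{B}_r$ with $r = \mathcal{O}(e^{-m\beta T_{\varepsilon_*}})$, the prefactor $C\max\{\|u_0\|,\|u_1\|\} \leq Cr$ is less than $\tfrac12$ for $m$ large, giving the Lipschitz bound with constant $\tfrac12$. Combining the two estimates shows $\mathcal{K}_m$ is a self-map of $\mathcal{B}_r$ and a contraction, so it has a unique fixed point $u \in \mathcal{B}_r$, whose norm is therefore bounded by $\|\mathcal{K}_m(0,0)\|/(1 - \tfrac12) = \mathcal{O}(e^{-m\beta T_{\varepsilon_*}})$.

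The main obstacle is making the quadratic remainder estimate rigorous in the weighted Hölder norms $C^{k,\alpha}_{-\delta}(M)$, particularly because the geometric deformation $w \mapsto g_m(w)$ is genuinely nonlinear (it moves the puncture points when the higher Fourier modes of $w$ are nonzero) and because $\mathcal{R}$ contains contributions from three sources: the nonlinearity $\mathcal{R}_u$ of $\mathcal{N}_{g_{\rm euc}}$ in $v$, the difference $\mathcal{N}_{g_m(w)}(1+v) - \mathcal{N}_{g_m}(1+v)$ beyond its linear part in $w$, and the cross term $\tfrac{n-4}{2}\psi w$ from \eqref{eq006}. One must check that each of these is smooth as a map between the relevant Banach spaces with second derivative bounded uniformly in $m$ on $\mathcal{B}_r$; the critical Sobolev exponent $\tfrac{n+4}{n-4}$ in the nonlinearity means the pointwise product estimates require $\|v/u\|_{C^0}$ to be small, which holds because $v \in C^{4,\alpha}_{-\delta}(M)$ decays while $u$ is bounded below on the relevant neck region, and because we have restricted to $\mathcal{B}_r$ with $r$ small. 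The uniformity in $m$ follows from the uniform bound on $\mathcal{G}$ established in Proposition \ref{propo002} together with the fact that the deformed Delaunay pieces are all modeled on the single fixed metric $g_{\varepsilon_*}$, so the structure constants in the Taylor expansion do not degenerate as $m \to \infty$.
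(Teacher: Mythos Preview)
Your proposal is correct and follows essentially the same route as the paper: the argument preceding the proposition already establishes the two estimates in \eqref{eq009} exactly as you describe, and the proposition is stated as an immediate consequence. Your additional paragraph on the rigor of the quadratic remainder estimate in the weighted spaces is more careful than what the paper writes, but the underlying strategy is identical.
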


Let \(u\in\mathcal B_r\subset\mathcal  W_{g_m}\oplus{C^{4,\alpha}_{-\delta}(M)}\) be the 
unique fixed point of \(\mathcal K_m\) given by the previous proposition. Then see 
that \[u=\mathcal K_m(u)=-\mathcal G(\mathcal N(0)+\mathcal R(u)).\] Applying the 
linearized operator \(\mathcal L\) to both sides, we get \[\mathcal L(u)=
\mathcal L(-\mathcal G(\mathcal N(0)+\mathcal R(u)))=-\mathcal N(0)-\mathcal R(u),\] and we 
see that \(u=(w,v)\) is our solution to the nonlinear gluing problem, i.e. the 
metric \(\widetilde g_m=(1+v)^{\frac{4}{n-4}}g_m(w)\) is complete and admits constant 
$Q$-curvature on all of \(M\).

\section{Nondegeneracy} \label{sec:nondegen} 

To provide a full description of the type of solution obtained through the previous argument, the 
idea is to follow through the ideas developed in \cite{MR1712628,MPU2,Jesse} to show that, 
for sufficiently large $m$, it is possible to conclude that the metric obtained through 
the gluing argument $g_{m}$ is unmarked nondegenerate. 

\subsection{Auxiliary Lemmas}

Recall the sympletic form defined in \cite{cjs2024}. Let 
$g_1 = U_1^{\frac{4}{n-4}} \overset{\circ}{g}$  defined on $\Ss^{n-1}\backslash\Lambda_1$, as 
in Section \ref{approx_sltn}, and transfer $g_1$ to $\R^n \backslash \Gamma$ using 
stereographic projection, rewriting $g_1 = u_1^{\frac{4}{n-4}} g_{\rm euc}$ with $u_1
= (U_1\circ \Pi^{-1}) u_{\rm sph}$. For any sufficiently small $r>0$ we define $\Omega_r = 
\R^n \backslash \left ( \bigcup_{j=0}^{k_1} \B_r(q_j^1) \right )$ and 
\begin{equation} \label{defn_symp_form}
\omega(v,w) = \lim_{r \searrow 0} \int_{\Omega_r} (v L_{u_1} (w) - 
w L_{u_1} (v) )d\mu_0.\end{equation} 
Here $d\mu_0$ is the Euclidean volume element and $L_{u_1}$ is the Jacobi operator of $g_1$, which 
is defined as in \eqref{eq011}.

\begin{lemma} \label{decay}
    Suppose $v\in B_{g_{1}}$ decays like $e^{-\delta_{j} t}$ near all singular points $q_{j}^1$ for 
    some $\delta_{j}> 1$, except $q_{0}^1$. Then $v$ is asymptotic to 
    $\alpha_{1,0}^{0,+}v_{\varepsilon_*}^{0,+}$ near $q_{0}^1$, for some $\alpha_{1,0}^{0,+}\in\R$. 
    In addition, if there exists $w\in B_{g_{1}}$ asymptotic to ${v}_{\varepsilon_*}^{0,-}$ 
    near $q_{0}^1$, then v decays at least like $e^{-\delta t}$ near $q_{0}^1$, for some $\delta >1$.
\end{lemma}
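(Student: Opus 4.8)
The plan is to run the classical symplectic/flux argument for the bounded null space, feeding in the explicit geometric deformations of a Delaunay end together with the unmarked nondegeneracy of $g_1$. First I would unpack the hypothesis $v\in B_{g_1}$. Since $L_{g_1}(v)=0$ as an element of $C^{0,\alpha}_{-\delta}(M_1)$ it vanishes pointwise, so by \eqref{goal_eqn1} the Euclidean representative $\hat v$ of $v$ lies in $\ker L_{u_1}$ on $\R^n\setminus\Gamma$, i.e. $v$ is a genuine global Jacobi field. Because $v\in\mathcal W_{g_1}\oplus C^{4,\alpha}_{-\delta}(M_1)$ with $1<\delta<\min\{\gamma_{\varepsilon_j^1,n+1}:0\le j\le k_1\}$, near each puncture $q_j^1$ it is asymptotic to a deficiency element $\xi_j(v)=\sum_{l=0}^n(\alpha_{1,j}^{l,+}v_{\varepsilon_j^1}^{l,+}+\alpha_{1,j}^{l,-}v_{\varepsilon_j^1}^{l,-})\phi_l$ up to a term that is $O(e^{-\delta t})$. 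Since $v_\varepsilon^{0,+}=O(1)$, $v_\varepsilon^{l,+}=O(e^{-t})$ and each $v_\varepsilon^{l,-}$ is unbounded, the assumption that $v=O(e^{-\delta_j t})$ with $\delta_j>1$ near $q_j^1$ for $j\ge 1$ forces $\xi_j(v)=0$ for every $j\ge 1$. Hence $v$ lies in the finite-dimensional space $B_{g_1}^{0}\subset B_{g_1}$ of bounded null-space elements whose only possibly nonzero asymptotic data sits at $q_0^1$.

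Next I would use the symplectic form $\omega$ of \eqref{defn_symp_form}. Three properties are needed, all of the kind established in \cite{cjs2024}: (i) for $v,w\in B_{g_1}$ the integrand $\hat vL_{u_1}(\hat w)-\hat wL_{u_1}(\hat v)$ vanishes identically, so $\omega(v,w)=0$ and $B_{g_1}$ is isotropic; (ii) integrating by parts, $\omega(v,w)=\sum_{j=0}^{k_1}\omega_j(v,w)$ is a sum of fluxes localized at the punctures, and each $\omega_j$ depends only on the deficiency data $\xi_j(v),\xi_j(w)$, since the $C^{4,\alpha}_{-\delta}$-parts contribute nothing in the limit $r\searrow 0$ because $\delta>1$; (iii) on the $2(n+1)$-dimensional space $V_j$ of deficiency data at $q_j^1$ the alternating form $\omega_j$ is the direct sum over $l=0,\dots,n$ of hyperbolic planes pairing $v_{\varepsilon_j^1}^{l,+}$ with $v_{\varepsilon_j^1}^{l,-}$ and vanishing on all other pairs, the $l\ge 1$ pairings being the standard indicial-root Wronskians and the $l=0$ pairing being nonzero because $v_\varepsilon^{0,-}$ genuinely grows linearly (equivalently because $\mathcal H_\varepsilon$ is strictly monotone), so that $v_\varepsilon^{0,+}$ and $v_\varepsilon^{0,-}$ together span the root-zero block. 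For $v\in B_{g_1}^{0}$ these give $\omega(v,w)=\omega_0(\xi_0(v),\xi_0(w))$ for every $w\in B_{g_1}$.

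Now I would bring in the explicit deformations. The three families available with no extra assumptions---the rescaling $f\mapsto R^{\frac{n-4}{2}}f(R\,\cdot)$, the translation of the origin $f\mapsto f(\cdot-a)$, and the translation of the point at infinity (a double Kelvin transform composed with a translation)---are smooth families of complete constant $Q$-curvature metrics on punctured spheres, and differentiating them at the identity produces global Jacobi fields in $B_{g_1}$ which near $q_0^1$ are asymptotic to $v_{\varepsilon_*}^{0,+}$, to the $v_{\varepsilon_*}^{l,-}\phi_l$ $(l=1,\dots,n)$, and to the $v_{\varepsilon_*}^{l,+}\phi_l$ $(l=1,\dots,n)$ respectively; these are exactly the computations carried out in the proof of Lemma \ref{lem002} and in the expansion \eqref{trans_del_expansion}. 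Let $W_0\subset V_0$ be the span of their data at $q_0^1$. Then $W_0=\mathrm{span}\{v_{\varepsilon_*}^{0,+}\}\oplus\bigoplus_{l=1}^n\mathrm{span}\{v_{\varepsilon_*}^{l,+},v_{\varepsilon_*}^{l,-}\}$ is coisotropic and its $\omega_0$-orthogonal complement is exactly $\mathrm{span}\{v_{\varepsilon_*}^{0,+}\}$. By isotropy $\xi_0(v)$ is $\omega_0$-orthogonal to $W_0$, hence $\xi_0(v)\in\mathrm{span}\{v_{\varepsilon_*}^{0,+}\}$, which is the first assertion with $\alpha_{1,0}^{0,+}$ the corresponding coefficient. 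For the second assertion, the extra hypothesis supplies $w\in B_{g_1}$ asymptotic to $v_{\varepsilon_*}^{0,-}$ near $q_0^1$, so $W_0$ together with $\xi_0(w)$ spans all of $V_0$; then $0=\omega(v,w)=\omega_0(\xi_0(v),\xi_0(w))=\alpha_{1,0}^{0,+}\,\omega_0(v_{\varepsilon_*}^{0,+},v_{\varepsilon_*}^{0,-})$ forces $\alpha_{1,0}^{0,+}=0$, so near $q_0^1$ we have $v\in C^{4,\alpha}_{-\delta}(M_1)$; since $\gamma_{\varepsilon_*,n+1}>1$, choosing $\delta\in(1,\gamma_{\varepsilon_*,n+1})$ and invoking the indicial-root analysis of \cite{jesse2020} gives $v=O(e^{-\delta t})$ for some $\delta>1$.

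The main obstacle is the middle step: making properties (i)--(iii) of $\omega$ rigorous across the spherical, Euclidean and cylindrical gauges---in particular that the flux at each puncture sees only the deficiency modes, is block-diagonal in the spherical-harmonic index, and is nondegenerate on the resonant $l=0$ block---and checking that the rescaling/translation families genuinely land in $B_{g_1}$, that is, that displacing the punctures keeps us inside the unmarked framework (this is precisely why the deficiency space of Definition \ref{definition-defi-space} was enlarged to carry the first nonconstant Fourier mode). Both points are essentially contained in \cite{cjs2024} and in the computations already recorded in the proof of Lemma \ref{lem002}, so the remaining work is the linear-algebra assembly above.
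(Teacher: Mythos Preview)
Your proposal is correct and follows essentially the same route as the paper: both arguments use the symplectic form $\omega$ of \eqref{defn_symp_form} together with the explicit Jacobi fields coming from rescaling and from the two kinds of translation to kill the unwanted deficiency coefficients at $q_0^1$. The paper runs this one coefficient at a time by contradiction (assume $\alpha_{1,0}^{0,-}\neq 0$, pair against the element of $B_{g_1}$ asymptotic to $v_{\varepsilon_*}^{0,+}$, and compute $\omega(v,w)=\alpha_{1,0}^{0,-}\,\frac{d}{d\varepsilon}\mathcal H_\varepsilon\neq 0$, then say the remaining cases are similar), whereas you package the same computation as the linear-algebraic statement that $\xi_0(v)$ lies in the $\omega_0$-orthogonal complement of the coisotropic subspace $W_0$; your handling of the second assertion and of why the fluxes at $q_j^1$, $j\ge 1$, vanish (via $\xi_j(v)=0$ rather than via smoothness of the test function) is in fact slightly more explicit than the paper's.
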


\begin{proof}
We know that $v$ can be expressed as \eqref{eq005}. Suppose that $\alpha_{1,0}^{0,-}\not=0$. By 
Lemma \ref{lem002} there exists $w\in B_{g_1}$ such that $w$ is asymptotic to 
$v_{\varepsilon_*}^{0,+}$ near $q_0^1$ and smooth in $\Ss^n\backslash\{q_0^1\}$. On one hand, 
since $v,w \in B_{g_{1}}$, then $L_{g_{1}}(v) = L_{g_{1}}(w) = 0$, which implies that 
$\omega(v,w)= 0$. On the other hand, using \eqref{eq011}, \eqref{defn_symp_form} and 
integration by parts, we have
 \begin{equation*} \label{symp_form2} 
 \omega(v,w) = \lim_{r \searrow 0} \sum_{j=0}^k 
 \int_{\partial \B_r(q_j^1)} (w \partial_r \Delta_{g_1} v - v \partial_r \Delta_{g_1} w
 + \partial_r v \Delta_{g_1} w - \partial_r w \Delta_{g_1} v )d\mu_0.\end{equation*}
Since $v$ decays like $e^{-\delta_jt}$ near the singular points $q_j^1$, for all $j\not=0$, and 
$w$ is smooth in $\Ss^n\backslash\{q_0^1\}$, we show that 
$$\omega(v,w)=\alpha_{1,0}^{0,-}\frac{d}{d\varepsilon}\mathcal H_\varepsilon\not=0,$$
where $\mathcal H_\varepsilon$ is defined in \eqref{eq002}, leading to a contradiction. See 
the proof of Theorem 16 in \cite{cjs2024} for details.
All the other cases, $\alpha_{1,0}^{l,\pm}=0$ for $l\in \{1,\dots, n\}$, follow through a 
similar argument. 
\end{proof}

\begin{lemma} Let $\delta \in\left(1, \gamma_{n+1}(\varepsilon)\right)$. Then there exists an 
operator
$$
H_{T, \varepsilon}: C_{-\delta}^{0, \alpha}\left([-T, T] \times \Ss^{n-1}\right) 
\rightarrow C_{-\delta}^{4, \alpha}\left([-T, T] \times \Ss^{n-1}\right),
$$
uniformly bounded in $T$, such that $u=H_{T, \varepsilon}(f)$ is a solution of 
$L_{g_{\varepsilon}}(u)=f$.
\end{lemma}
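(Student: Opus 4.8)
The plan is to build the operator $H_{T,\varepsilon}$ on the finite cylinder $[-T,T]\times\Ss^{n-1}$ by separating variables, inverting the scalar ODE operators $\mathcal L_{\varepsilon,j}$ mode by mode, and then patching the modes together with a uniform (in $T$) estimate. First I would expand an arbitrary $f\in C^{0,\alpha}_{-\delta}([-T,T]\times\Ss^{n-1})$ in the eigenbasis $\{\phi_j\}$ of $\Delta_\theta$ on $\Ss^{n-1}$, writing $f=\sum_j f_j(s)\phi_j(\theta)$, and correspondingly seek $u=\sum_j u_j(s)\phi_j(\theta)$ solving $\mathcal L_{\varepsilon,j}(u_j)=f_j$ on $[-T,T]$ for each $j$. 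Because $\delta\in(1,\gamma_{n+1}(\varepsilon))$ lies strictly between the first two positive indicial roots $1<\gamma_{\varepsilon,1}<\dots$, for the low modes ($j=0$ and $j=1,\dots,n$, where the indicial roots are $0,\pm1$) one must impose boundary conditions at $s=\pm T$ that kill the slowly growing Jacobi fields, while for the high modes ($j\ge n+1$) the homogeneous solutions already decay faster than $e^{-\delta|s|}$ and one can solve with, say, Dirichlet-type conditions. In each mode one constructs $u_j$ via the variation-of-parameters formula using a basis of solutions of $\mathcal L_{\varepsilon,j}(w)=0$, choosing the free constants so that the boundary conditions hold; the key point is that the resulting Green's function for $\mathcal L_{\varepsilon,j}$ on $[-T,T]$, with these boundary conditions, has a kernel bounded by $C\,\frac{\cosh^\delta(s')}{\cosh^\delta(s)}$-type weights uniformly in $T$, which is exactly the content of the Delaunay linear theory in \cite{jesse2020} and of the analogous second-order constructions in \cite{MR1712628,MPU2}.

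The second step is to promote the mode-by-mode solution into an estimate in the weighted Hölder norm $C^{k,\alpha}_{-\delta}$. Here I would argue by contradiction and rescaling in the usual way: if the uniform bound failed, there would be sequences $T_\ell\to\infty$, $f_\ell$ with $\|f_\ell\|_{C^{0,\alpha}_{-\delta}}\to 0$ and $u_\ell=H_{T_\ell,\varepsilon}(f_\ell)$ with $\|u_\ell\|_{C^{4,\alpha}_{-\delta}}=1$; after translating to a point $s_\ell$ realizing (a fixed fraction of) the supremum and passing to a limit, one obtains a nonzero solution of $L_{g_\varepsilon}(u_\infty)=0$ on either $\R\times\Ss^{n-1}$ or a half-cylinder, lying in the weighted space with weight $-\delta$. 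Since $\delta<\gamma_{\varepsilon,n+1}$ and $\delta>1$, the indicial root analysis forces $u_\infty$ to be a linear combination of the bounded Jacobi fields $v_\varepsilon^{0,+}$, $v_\varepsilon^{j,\pm}$ — but on the full cylinder these are excluded by the asymptotic decay rate $e^{-\delta|s|}$ with $\delta>1$ (the mode-$0$ bounded field $v_\varepsilon^{0,+}$ is merely periodic, not decaying, and the $j=1,\dots,n$ fields decay only like $e^{\mp s}$), and on the half-cylinder they are excluded by the boundary conditions we imposed; this contradiction yields the uniform bound. Linearity of the construction then makes $H_{T,\varepsilon}$ a bounded linear operator with norm independent of $T$, and elliptic regularity (Schauder estimates for $L_{g_\varepsilon}$, whose coefficients are controlled because $v_\varepsilon$ is periodic, hence uniformly bounded) upgrades the solution from $C^{0,\alpha}$ data to a $C^{4,\alpha}$ solution with the stated norm control.

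The main obstacle is obtaining the \emph{uniformity in $T$} of the bound, and in particular the correct choice of boundary conditions at $s=\pm T$ in the low Fourier modes. Naively solving the ODEs on $[-T,T]$ with fixed boundary data produces Green's functions whose norms blow up as $T\to\infty$, because the indicial roots $0$ and $\pm1$ sit at or below the weight $\delta$; the fix is to project out, at each boundary component, the span of the homogeneous solutions growing like $e^{\delta|s|}$ (equivalently, to impose that $u$ has no component along the fast-growing indicial solution at each end), which is exactly the mechanism that makes the Delaunay Jacobi operator semi-Fredholm with uniform estimates. I expect the bookkeeping for the $j=1,\dots,n$ modes — where the two indicial roots are $+1$ and $-1$, straddling the lower end of the allowed weight interval — to require the most care, and I would handle it exactly as in the cited references by matching the variation-of-parameters solution against the explicit translation Jacobi fields $v_\varepsilon^{j,\pm}$. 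Once the uniform bound is in hand for each mode with a constant depending only on $j$ through a geometric-series-summable factor, summing over $j$ and invoking the high-mode decay closes the argument.
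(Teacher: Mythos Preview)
Your proposal is correct in outline and would yield the lemma, but it takes a genuinely different route from the paper. You build $H_{T,\varepsilon}$ from scratch: Fourier-decompose in $\theta$, solve each fourth-order ODE $\mathcal L_{\varepsilon,j}(u_j)=f_j$ on $[-T,T]$ by variation of parameters with carefully chosen boundary conditions in the low modes, and then establish the uniform-in-$T$ bound by a blow-up/contradiction argument against the indicial-root classification. This is the classical Mazzeo--Pacard style construction, and it is self-contained but labor-intensive; the bookkeeping you flag for the $j=1,\dots,n$ modes is real and must be done. The paper instead uses a short perturbation argument modeled on Proposition~\ref{propo002}: choose a cutoff $\beta$ supported near one half of the cylinder, extend $\beta f$ and $(1-\beta)f$ by zero to half-infinite cylinders $[-T,\infty)\times\Ss^{n-1}$ and $(-\infty,T]\times\Ss^{n-1}$, invoke the already-known uniformly bounded right inverse for $L_{g_\varepsilon}$ on each half-cylinder (from the unmarked nondegeneracy of the Delaunay metric) to produce $u_1,u_2$, set $\hat H_{T,\varepsilon}(f)=\beta u_1+(1-\beta)u_2$, and then correct $\hat H_{T,\varepsilon}$ to an exact right inverse by a Neumann series, exactly as in the proof of Proposition~\ref{propo002}. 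The paper's approach is much quicker because it recycles the half-cylinder theory as a black box and avoids any mode-by-mode analysis or explicit boundary conditions; your approach has the advantage of being independent of those prior results and of making the role of the indicial gap $(1,\gamma_{n+1}(\varepsilon))$ completely transparent.
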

\begin{proof}
The idea of the proof is to use a pertubation argument, similar to what was used on 
Proposition \ref{propo002}. Consider $\beta$ to be a cut-function on $[-T,T] \times \Ss^{n-1}$ 
such that $\beta(t,\theta)=1$ for $t\leq -1$, and $\beta(t,\theta)=0$ for $t\geq 1$.

Since the metric $g_{\varepsilon}$ is unmarked nondegenerate, let $u_{1} \in 
C^{4,\alpha}([-T, \infty)\times \Ss^{n-1})$ be a solution of $L_{g_1}(u_{1}) = \beta f$. 
Similarly, $u_{2} \in C^{4,\alpha}([-\infty, T]\times \Ss^{n-1})$ let $L_{g_{\varepsilon}}(u_{2}) 
= (1-\beta)f$. Define $\hat{H}_{T,\varepsilon}(f) = \beta u_{1} + (1-\beta) u_{2}$ as the 
approximation of the inverse. Using the perturbation argument, the proof is concluded.
\end{proof}

\subsection{Nondegeneracy of the solution}

The remaining details of the proof of Theorem \ref{main_thm} closely follow the ideas 
presented in \cite{Jesse}. We include them here for the sake of completeness.

\begin{proposition}
    The metrics $\widetilde g_m$ constructed in Section \ref{sec-fixed-point} are unmarked 
    nondegenerate for all $m>0$ large enough.
\end{proposition}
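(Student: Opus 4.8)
The plan is to argue by contradiction: suppose there is a sequence $m_j \to \infty$ and nontrivial Jacobi fields $\phi_j \in C^{4,\alpha}_{\delta}(\widetilde g_{m_j})$ for some $\delta < -1$, i.e. $L_{\widetilde g_{m_j}}(\phi_j) = 0$, and normalize $\|\phi_j\|_{C^{4,\alpha}_{\delta}} = 1$. Since $\widetilde g_{m_j}$ is a small perturbation of the approximate metric $g_{m_j}$ (with $\|\widetilde g_{m_j} - g_{m_j}\|$ controlled by Proposition \ref{propo003}), it suffices to derive a contradiction from the existence of such fields for $L_{g_{m_j}}$, absorbing the difference via a standard perturbation argument as in the proof of Proposition \ref{propo002}. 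Using the right inverse $G_{m}$ constructed in Proposition \ref{propo002}, which is bounded uniformly in $m$, we first observe that no nontrivial element of the bounded null space of $L_{g_m}$ can lie in $C^{4,\alpha}_{-\delta}(M)$ alone — that would contradict surjectivity being paired with the uniformly bounded inverse. So each $\phi_j$ must have a nontrivial projection onto the deficiency space $\mathcal{W}_{g_{m_j}}$, and the real content is to rule this out.

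The key step is a concentration-compactness / blow-up analysis on the neck. Decompose $\phi_j = w_j + \psi_j$ with $w_j \in \mathcal{W}_{g_{m_j}}$ and $\psi_j \in C^{4,\alpha}_{-\delta}(M)$, via the linear decomposition of Proposition \ref{propo001}. After passing to a subsequence and rescaling, $\phi_j$ converges (in $C^{4,\alpha}_{\mathrm{loc}}$) away from the neck to Jacobi fields on the two summands $(M_1,g_1)$ and $(M_2,g_2)$; on the growing cylindrical region $\widehat{\mathbb{A}}_{m_j}$, $\phi_j$ converges to a Jacobi field of the Delaunay metric $g_{\varepsilon_*}$ on the full cylinder $\R \times \Ss^{n-1}$. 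The limiting fields on $M_1$ and $M_2$ lie in their respective bounded null spaces $B_{g_1}$, $B_{g_2}$; since both summands are unmarked nondegenerate and the only globally bounded Jacobi fields on the Delaunay cylinder of relevance are spanned by $v_{\varepsilon_*}^{0,+}$ and the low Fourier modes $v_{\varepsilon_*}^{l,\pm}\phi_l$, the matching conditions across the neck force compatibility of the asymptotic data of the two limit fields. Here I would invoke the symplectic-form machinery: $\omega(v,w) = 0$ for any two elements of the bounded null space (Lemma \ref{decay} and the auxiliary operator $H_{T,\varepsilon}$), and the Hamiltonian derivative $\tfrac{d}{d\varepsilon}\mathcal{H}_\varepsilon \neq 0$, to conclude that the coefficient of the growing Jacobi field $v_{\varepsilon_*}^{0,-}$ must vanish on both sides. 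One then shows that the surviving compatible field, built from bounded Jacobi data that propagates across the neck, extends to a genuine bounded Jacobi field on each summand; combined with the hypothesis that $(M_1,g_1)$ admits a one-parameter family deforming the necksize at $q_0^1$ to first order (which controls the $v_{\varepsilon_*}^{0,-}$ direction on that end), unmarked nondegeneracy of $g_1$ and $g_2$ forces the limit to be zero.

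Finally, with the limit field vanishing one upgrades this to a contradiction with the normalization $\|\phi_j\|=1$: the uniformly bounded right inverse $G_{m_j}$ gives $\|\psi_j\| \le C\|L_{g_{m_j}}(\phi_j) - (\text{terms from }w_j)\|$, and since $w_j \to 0$ (its coefficients are extracted from the vanishing limit) while $L_{g_{m_j}}(\phi_j) = 0$, a careful bookkeeping of the error terms on the neck — exactly the estimates of type $\mathcal{O}(e^{-m\gamma})$ appearing in the proof of Proposition \ref{propo002} — shows $\|\phi_j\| \to 0$, the desired contradiction. The main obstacle, and the step requiring the most care, is the neck analysis: one must show that no Jacobi field can "hide" in the long cylindrical region by being of size comparable to the normalization yet invisible in the $C^{4,\alpha}_{-\delta}$ limit on either summand. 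This is precisely where the refined choice of deficiency space (including the $n+1$ Fourier modes per end rather than just the rotationally symmetric ones) and the first-order necksize deformation hypothesis are essential, since they pin down the full $(n+1)$-dimensional space of bounded Jacobi data on the connecting end and prevent a nontrivial element from propagating across the glued region.
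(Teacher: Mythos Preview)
Your overall plan (contradiction, blow-up limits on the summands and on the Delaunay neck, then invoke nondegeneracy) matches the paper's, but the implementation has a genuine gap in two places.

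First, the deficiency-space decomposition is misapplied. By the definition of unmarked degeneracy the hypothetical Jacobi field already lives in the \emph{decaying} space $C^{4,\alpha}_{-\delta}(M)$ (with $\delta>1$ in the paper's convention); it has no $\mathcal{W}_{g_m}$-component, and Proposition \ref{propo001} does not apply to it. So the sentence ``each $\phi_j$ must have a nontrivial projection onto the deficiency space'' and the ensuing decomposition $\phi_j=w_j+\psi_j$ are not the right framework. What the paper does instead is a pointwise normalization $\sup_M \rho_k^{-1}|u_k|=1$ with an explicit weight $\rho_k$, then a case split according to where the maximum point $p_k$ lands (neck with $|t_k|$ bounded; neck with $|t_k|$ unbounded; near a puncture; in a compact core of $M_1$; in a compact core of $M_2$). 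Cases 1--3 are ruled out by indicial-root/decay considerations on the Delaunay cylinder, essentially as you suggest.

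Second, and more seriously, you underestimate the asymmetry between the $M_1$-limit and the $M_2$-limit. When the maximum sits in $M_1^c$ (Case 4), the limit $\bar u$ decays at $q_1^1,\dots,q_{k_1}^1$ but a priori not at $q_0^1$; Lemma \ref{decay} plus the hypothesis that $g_1$ has a one-parameter family changing the necksize (so $B_{g_1}$ contains an element asymptotic to $v_{\varepsilon_*}^{0,-}$) forces decay at $q_0^1$ too, and then nondegeneracy of $g_1$ gives $\bar u=0$. But when the maximum sits in $M_2^c$ (Case 5), there is \emph{no} such hypothesis on $g_2$, so Lemma \ref{decay} only yields $\bar u\sim a\,v_{\varepsilon_*}^{0,+}$ near $q_0^2$; this is in $B_{g_2}$ but not in the decaying space, and unmarked nondegeneracy of $g_2$ does \emph{not} kill it. Your proposal glosses over this by saying ``unmarked nondegeneracy of $g_1$ and $g_2$ forces the limit to be zero,'' which is exactly where the argument fails. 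The paper closes this gap by a boundary computation on the truncated neck: rescaling so $a=1$, one applies the bounded right inverse $H_{T,\varepsilon}$ of $L_{g_\varepsilon}$ on $[-m_kT_{\varepsilon_*}+R_0,\,m_kT_{\varepsilon_*}-R_0]\times\Ss^{n-1}$ to $L_{g_\varepsilon}(u_k)$, pairs the result against $v_{\varepsilon_*}^{0,-}$ via integration by parts, and reads off a boundary contribution equal to $1+O(e^{-\delta R_0}+e^{(\delta-\mu)R_0})$, which is the contradiction. You mention $H_{T,\varepsilon}$ but not this mechanism; without it Case 5 is open and the proof is incomplete.
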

\begin{proof} We begin with some general comments about the mapping properties of the 
Jacobi operator $L_g$. It follows from definitions that if $L_{g}:W^{k+4,2}_{-\delta} \rightarrow 
W^{k,2}_{-\delta}$ is injective for some $\delta >1$ then $L_g : W^{k+4,2}_{-\delta'} 
\rightarrow W^{k,2}_{-\delta'}$ is also injective for any $\delta'>\delta$. Furthermore, 
one can use the twisted operator of Section 3.6 of \cite{jesse2020} to show that 
if $L_g:W^{k+4,2}_{-\delta} \rightarrow W^{k,2}_{-\delta}$ is injective some some 
$\delta >1$ then it is injective for {\bf each} $\delta>1$. Roughly speaking, this follows from 
the properties of the meromorphic inverse of the twisted operator. One realizes changes 
of the weight in the function space by shifting a certain contour integral constructing 
this meromorphic inverse operator. So long as the contour in question does not surround 
a pole, the two inverse operators agree. 

We prove the nondegeneracy of the solutions constructed in Section \ref{sec:nonlinear} 
by contradiction. After the paragraph above, we can fix $\delta >1$. Assume there exist 
$m_k \rightarrow \infty$ such that $g_k = g_{m_k}$ is unmarked degenerate, which in turn 
there exist $u_k \in C^{4,\alpha}_{-\delta} (M, g_k)$ such that $L_{g_k} (u_k) = 0$. As in 
\eqref{eq022}, we decompose 
$$M= M_1^c \cup M_2^c \cup \left ( \bigcup_{j=1}^{k_1} B_{r_0} (q_j^1) \backslash \{ q_j^1\}
\right ) \cup \left ( \bigcup_{j=1}^{k_2} B_{r_0} (q_j^2) \backslash \{ q_j^2\}
\right ) \cup \widehat{\mathbb{A}}_{m_k} . $$
We will consider the normalization
\begin{equation*}
    \sup_{M}\rho_{k}^{-1}|u_{k}| = 1,
\end{equation*}
where $\rho_{k}$ is given by
$$\rho_{k}(p) = 
\begin{cases}
    1,   & p \in M_{i}^{c}\\
    |q-q_j^i|^{\frac{4-n}{2} +\delta'}, & p\in B_{r_0} (q_j^i) \backslash \{ q_j^i \} \\ 
    \mathfrak{F}^{-1} \left ( \frac{\cosh^{\delta}(m_{k}T_{\varepsilon_*})}{\cosh^{\delta}t}\right ), 
    & p=(t,\theta) \in \hat{\mathbb{A}}_{m_{k}},
\end{cases}$$
for some $1<\delta'<\delta$. Near the puncture points, we can write the weight function 
in Emden-Fowler coordinates as 
$$\mathfrak{F}(\rho) (t,\theta) = e^{-\delta' t} , \qquad \mathfrak{F}(\rho)(\tau,\theta) 
= e^{-\delta'\tau}.$$ 
We also use \eqref{eq023} to identify the extended annulus $\widehat{\mathbb{A}}_{m_k}$ 
with $(-T_0^1-(m_k+1/2)T_{\varepsilon_*},T_0^2+(m_k+1/2)T_{\varepsilon_*})\times\Ss^{n-1}$. 

Since $\rho_k^{-1}(p)|u_k(p)|\to 0$ when $p\to q_j^i$, for all $j\not=0$, we can choose a 
point $p_{k}\in M$ such that $\rho_k(p_k)^{-1}|u_k(p_k)|=1$. The proof follows by obtaining 
contradictions depending on the place in $M$ where the maximum point is located. The main idea 
is to construct a sequence of solutions converging to a Jacobi field that cannot exist in the 
weighted space we consider. Now we divide in cases.

\medskip
\noindent{\bf Case 1:} $p_{k}=\left(t_{k}, \theta_{k}\right) \in \hat{\mathbb{A}}_{m_{k}}$ 
and $\left|t_{k}\right|$ is bounded.
\medskip

By \eqref{annulus_metric3} and Proposition \ref{propo003} we have that $g_k$ converges 
to $g_{\varepsilon_*}=v_{\varepsilon}^{\frac{4}{n-4}}g_{\rm cyl}$. Define $\widetilde u_k$ 
in  $\hat{\mathbb{A}}_{m_{k}}$ as $\tilde{u}_{k}(t, \theta)=
\left(\cosh  (m_{k}T_{\varepsilon_*})\right)^{-\delta} u_{k}(t, \theta)$. Since $|t_k|$ is 
bounded and $|\widetilde u_k(t,\theta)|\leq(\cosh t)^{-\delta}$, we can extract a subsequence 
such that  $\left(t_{k}, \theta_{k}\right) \rightarrow(t_0, \theta_0)$ and $\widetilde u_k$ 
converges uniformly on compact sets of $\R\times\Ss^{n-1}$ to a smooth function $\overline u$. 
Moreover,  $|\overline u(t,\theta)|\leq (\cosh t)^{-\delta}$, with equality holding at 
$(t_0,\theta_0)$.  Therefore, $L_{g_{\varepsilon_*}}(\overline u)=0$ and $\overline u$ is 
nontrivial. By decomposing $\overline u$ into the eigenvalues of the Laplacian in the 
sphere $\Ss^{n-1}$, we obtain a contradiction to the fact that the indicial roots determine 
the rates of exponential growth of the solutions.

\medskip
\noindent{\bf Case 2:} $p_{k}=\left(t_{k}, \theta_{k}\right) \in \hat{\mathbb{A}}_{m_{k}}$ 
and $\left|t_{k}\right|$ is unbounded.
\medskip

 Assume that $t_{k}<0$ (the case $t_k>0$ is similar). In this case we restrict to the part 
 of $\hat{\mathbb{A}}_{m_{k}}$ identified with $\left[-T_0^1-(m_k+1/2)T_{\varepsilon_*}-t_{k},
 -t_k\right] \times \Ss^{n-1}$. Up to a subsequence, we can suppose that 
 $-T_0^1-(m_k+1/2)T_{\varepsilon_*}-t_{k}\to a_0\in[-\infty,0]$. Define
$$
\widetilde{u}_{k}(t, \theta)=\frac{\cosh^\delta t_k}{\cosh ^\delta 
(m_kT_{\varepsilon_*})}u_k\left(t+t_{k}, \theta\right).
$$
Thus $\left|\widetilde{u}_{k}\left(0, \theta_{k}\right)\right|=1$. Besides, for all $t\in\mathbb R$ 
we have $\cosh t_k\leq 2e^{t}\cosh(t+t_k)$. This implies that
$$
\left|\widetilde u_{k}(t, \theta)\right| \leq  2^\delta e^{\delta t}.
$$
Again, we can take a subsequence of $\widetilde u_k$ converging smoothly on compact sets of 
$(a_0,+\infty)\times \Ss^{n-1}$ to $\overline {u}$ satisfying $L_{g_{\varepsilon_*}}
(\overline u)=0$, $|\overline u(0,\theta_0)|=1$ and $|\overline u(t,\theta)|
\leq 2^\delta e^{\delta t}$. As before, we get a contradiction.

\medskip
\noindent{\bf Case 3:} $p_{k}=\left(t_{k}, \theta_{k}\right)\in B_{r_0}(q_j^i) \backslash\{q_j^i\}$, 
for some $j=1, \ldots, k_i$ and some $i=1,2$, and $t_{k} \rightarrow \infty$.
\medskip

This case cannot occur because $u_k/\rho_k \rightarrow 0$ as $p \rightarrow q_j^i$ for 
each $i\in \{ 1,2\}$ and $j \in \{ 1, \dots, k_i\}$. 

\medskip
\noindent{\bf Case 4:} $p_{k} \in \Omega_1$, where $\Omega_1$ is some fixed compact set 
containing $M_1^c$.  
\medskip

Since $\Omega_1$ is compact, we can assume $p_k$ converges to some $\overline{p}\in \Omega_1$. 
Notice $\rho_{k}$ is bounded and bounded away from 0 in $\Omega_1$. Define $\tilde{u}_k=
\rho_k^{-1}(p_k)u_k$ in the set $$
M_1^c \cup\left(\bigcup_1^{k_1} B_{r_0}\left(q_j^{1}\right) \backslash
\left\{q_j^{1}\right\}\right) \cup\left\{(t, \theta) \in 
\hat{\mathbb{A}}_{m_{k}} \mid t<0\right\}.
$$
Then $\tilde{u}_k$ is uniformly bounded and $\tilde{u}_k(p_k)=1$, so up to a subsequence, $u_k$ 
converges uniformly to a non-trivial function $\overline{u}$ in $M_1$ where 
$L_{g_1}(\overline{u})=0$. Also, $\overline{u}$ decays exponentially near all $q^1_j$ except $q^1_0$. 
By Lemma \ref{lem002}, there exists some $w\in B_{g_1}$ which is asymptotic to 
$v_{\varepsilon_*}^{0,-}$. Applying Lemma \ref{decay}, $\bar{u}$ decays like $e^{-\delta t}$ near 
$q^1_0$ for some $\delta>1$, which contradicts the fact that $\left(M_1, g_1\right)$ is 
unmarked nondegenerate.

\medskip

\noindent{\bf Case 5:} \( p_k \in \Omega_2 \), where \( \Omega_2 \) is a fixed compact set 
containing \( M_2^c \).

\medskip
 Again, taking the restriction to
\[
M_2^c \cup \left(\bigcup_{j=1}^{k_2} B_{r_0}(q_j^2) \setminus \{q_j^2\} \right) \cup 
\left\{(t, \theta) \in \hat{\mathbb{A}}_{m_{k}} \mid t > 0 \right\},
\]  
we can extract a convergent subsequence. However, in this case, the only conclusion we obtain is 
that \( \bar{u} \) is asymptotic to \( a v_{\varepsilon}^{0,+} \) near \( q_0^2 \), which is 
not yet sufficient to reach a contradiction.

To develop this case further, rescale the problem so that $a=1$ and fix $R_0>0$. Then, there 
is an $k_0$ depending on $R_0$ such that for $k \geq k_0$
$$\left\|u_{k}-v_{\varepsilon}^{0,+}\right\|_{C^{4, \alpha}\left(\left[m_k T_{\varepsilon_*}
-R_0-1, m_k T_{\varepsilon_*}\right] \times \Ss^{n-1}\right)}=O\left(e^{-\delta R_0}\right)
$$
Moreover, if we restrict $u_{k}$ to
$M_1^c \cup\left(\bigcup_1^{k_1} B_{r_0}(q_j^1) \backslash \{q_j^1\}\right) 
\cup\left\{(t, \theta) \in \hat{\mathbb{A}}_{m_{k}} \mid t<0\right\},
$ as we previously argued, $u_{k}$ converges uniformly to zero.
Thus for $k \geq k_0$,
$$\left\|u_k\right\|_{C^{4, \alpha}\left(\left[-m_k T_{\varepsilon_*},
-m_k T_{\varepsilon_*}+R_{0}+1\right] \times \Ss^{n-1}\right)}=O\left(e^{-\delta R_0}\right).$$

Recall that we can write the approximate metric $g_{k}$ in $\hat{\mathbb{A}}_{m_{k}}$ 
as $\left(u_{\varepsilon}+v_{k}\right)^{\frac{4}{n-4}}\left(d t^2+d \theta^2\right),
$ where $v_{k}$ has the following decay
$$
\left|v_{k}(t, \theta)\right|=O\left(\frac{\cosh ^{\gamma_{n+1}(\varepsilon)} t}
{\cosh ^{\gamma_{n+1}(\varepsilon)} m_k T_{\varepsilon_*}}\right).
$$
Consequently $L_{g_k}-L_{g_{\varepsilon}}$, when restricted to $\hat{\mathbb{A}}_{m_{k}}$, is a 
fourth order differential operator whose coefficients are $O\left(\cosh ^\mu t / 
\cosh ^\mu m_k T_{\varepsilon_* }\right)$ on $\hat{\mathbb{A}}_{m_{k}}$ for some 
$\mu \in\left(\delta, \gamma_{n+1}(\varepsilon)\right)$, which implies
$$
L_{g_{\varepsilon}}\left(u_{k}\right)(t, \theta)=O\left(\frac{\cosh ^{\mu-\delta} t}
{\cosh ^{\mu-\delta} m_k T_{\varepsilon_*}}\right) \quad \mbox{ in } \hat{\mathbb{A}}_{m_{k}}.
$$
In other words, $L_{g_\varepsilon}\left(u_{k}\right) \in 
C_{\delta-\mu}^{0, \alpha}\left(\left[-m_k T_{\varepsilon_*}+R_0, m_k T_{\varepsilon_*}
-R_0\right] \times \Ss^{n-1}\right)$ and
$$
\left\|L_{g_\varepsilon}\left(u_{k}\right)
\right\|_{C_{\delta-\mu}^{0, \alpha}
\left(\left[-m_k T_{\varepsilon_*}+R_0, m_k T_{\varepsilon_*}-R_0\right] \times 
\Ss^{n-1}\right)}= O\left(e^{(\delta-\mu) R_0}\right).
$$
Let $\tilde{u}_{k}=H_{m_k T_{\varepsilon_*}-R_{0}, 
\varepsilon}\left(L_{g_{\varepsilon}}\left(u_{k}\right)\right)$. Then
$$
\begin{aligned}
 0&=\int_{\left[-m_k T_{\varepsilon_*}+R_0, m_k T_{\varepsilon_*}-R_0\right] 
 \times \Ss^{n-1}} \tilde{u}_{k} L_{g_{\varepsilon}}\left(v_{\varepsilon}^{0,-}\right)
 -v_t^{0,-} L_{g_\varepsilon}\left(\tilde{u}_{k}\right) \\
 = & \int_{\left\{m_k T_{\varepsilon_*}-R_{0}\right\} \times \Ss^{n-1}}(\bar{u}_{k} \partial_r 
 \Delta_{g_{\varepsilon}} v_{\varepsilon}^{0,-} - v_{\varepsilon}^{0,-} 
 \partial_r \Delta_{g_{\varepsilon}} \bar{u}_{k}
 + \partial_r v_{\varepsilon}^{0,-} \Delta_{g_{\varepsilon}} \bar{u}_{k}  
 - \partial_r \bar{u}_{k} \Delta_{g_{\varepsilon}} v_{\varepsilon}^{0,-} ) \\ 
 &  \qquad -\int_{\left\{-m_k T_{\varepsilon_*}+R_0\right\} \times \Ss^{n-1}}
 (\bar{u}_{k} \partial_r \Delta_{g_{\varepsilon}} v_{\varepsilon}^{0,-} 
 - v_{\varepsilon}^{0,-} \partial_r \Delta_{g_{\varepsilon}} \bar{u}_{k}
 + \partial_r v_{\varepsilon}^{0,-} \Delta_{g_{\varepsilon}} \bar{u}_{k} - 
 \partial_r \bar{u}_{k} \Delta_{g_{\varepsilon}} v_{\varepsilon}^{0,-} ) \\
=& 1+ O\left(e^{-\delta R_0}+e^{(\delta-\mu) R_0}\right).
\end{aligned}
$$
which is a contradiction and concludes the proof of Theorem \ref{main_thm}.

\end{proof}
\section{Topology of the unmarked moduli space} \label{sec:top} 

In this section, we complete the proof of Theorem \ref{noncontractible_thm}, 
showing that the loop we construct cannot be contractible. We 
first use Liouville's theorem to describe the space of conformal classes on a 
finitely punctured sphere. 

\begin{lemma} \label{conf_classes} 
Let $n\geq 3$ and let $k \geq 4$. The space of conformal classes of 
complete Riemannian metrics on $\Ss^n \backslash \Lambda$ where $\# \Lambda 
= k$ is precisely 
$$\mathrm{Conf}_k = (\Ss^n \backslash \{ p_0, p_1, p_2\})^{k-3} 
\backslash \mathrm{Diag}, $$
where $\{ p_0, p_1, p_2\}$ is any (fixed) triple of points in 
$\Ss^n$ and $\mathrm{Diag}$ is the full diagonal, {\it i.e.} 
the set of all tuples such that some point $p\in \Ss^n$ appears twice. 
\end{lemma}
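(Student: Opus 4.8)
The plan is to use Liouville's rigidity theorem to reduce the classification of conformal classes to a configuration problem on $\Ss^n$, and then to normalize three of the punctures to $p_0,p_1,p_2$. I would begin by noting that each metric in play is of the form $g=U^{\frac{4}{n-4}}\overset{\circ}{g}$ on $\Ss^n\backslash\Lambda$, hence conformally flat, so its conformal class is simply $[\overset{\circ}{g}|_{\Ss^n\backslash\Lambda}]$; this depends only on the punctured domain $\Ss^n\backslash\Lambda$, i.e. only on the (labeled) set $\Lambda$. Consequently the space of conformal classes, up to conformal diffeomorphism, is the space of ordered $k$-tuples of distinct points of $\Ss^n$ modulo the group $\mathrm{Conf}(\Ss^n)$ of conformal diffeomorphisms of $\Ss^n$.

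Since $n\geq 3$, Liouville's theorem identifies $\mathrm{Conf}(\Ss^n)$ with the M\"{o}bius group: every conformal diffeomorphism between connected open subsets of $\Ss^n$ is the restriction of a unique M\"{o}bius transformation. In particular a conformal diffeomorphism $\Ss^n\backslash\Lambda_1\to\Ss^n\backslash\Lambda_2$ extends to a M\"{o}bius transformation $\Phi$ of $\Ss^n$, which, being a homeomorphism carrying $\Ss^n\backslash\Lambda_1$ onto $\Ss^n\backslash\Lambda_2$, carries $\Lambda_1$ bijectively onto $\Lambda_2$; conversely any $\Phi\in\mathrm{Conf}(\Ss^n)$ with $\Phi(\Lambda_1)=\Lambda_2$ restricts to a conformal diffeomorphism. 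Next I would use that $\mathrm{Conf}(\Ss^n)$ acts transitively on ordered triples of distinct points (stereographically send one point to $\infty$ and apply a Euclidean similarity to the other two): for every ordered $k$-tuple of distinct points $(q_0,\dots,q_{k-1})$ there is $\Phi\in\mathrm{Conf}(\Ss^n)$ with $\Phi(q_i)=p_i$ for $i=0,1,2$, and then $(\Phi(q_3),\dots,\Phi(q_{k-1}))\in(\Ss^n\backslash\{p_0,p_1,p_2\})^{k-3}\backslash\mathrm{Diag}=\mathrm{Conf}_k$, while conversely each point of $\mathrm{Conf}_k$ recovers the configuration $\{p_0,p_1,p_2,q_3,\dots,q_{k-1}\}$ and hence a conformal class. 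Checking that the resulting identification is a homeomorphism for the Gromov--Hausdorff topology is routine, using continuity of the normalizing M\"{o}bius transformation in the point positions.

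The step I expect to require the most care is the \emph{uniqueness} of this normalization, and hence the injectivity of the identification. For $n\geq 3$ the stabilizer in $\mathrm{Conf}(\Ss^n)$ of an ordered triple of points is not trivial but isomorphic to $O(n-1)$ --- in the model with $(p_0,p_1,p_2)=(\infty,0,e_1)$, the orthogonal maps fixing $e_1$, which pointwise fix the round circle through $p_0,p_1,p_2$ --- so the normalization is only canonical up to this $O(n-1)$, and $\mathrm{Conf}_k$ should be understood as the slice of the $\mathrm{Conf}(\Ss^n)$-action cut out by sending three designated, labeled punctures to $p_0,p_1,p_2$. I would therefore make this slice part of the very definition of the forgetful map $\mathcal{M}_k\to\mathrm{Conf}_k$, and verify that it is well defined and continuous. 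Retaining the residual $O(n-1)$ inside $\mathrm{Conf}_k$, rather than dividing it out, is in fact exactly what is wanted for the noncontractibility argument: the rotational family of solutions constructed for Theorem \ref{noncontractible_thm} moves the punctures by precisely this $O(n-1)$, so $\mathrm{Conf}_k$ is the natural receptacle in which to detect that family.
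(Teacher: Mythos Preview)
Your approach is the same as the paper's: reduce to configurations of punctures via conformal flatness, then invoke Liouville's theorem to normalize three of them to $p_0,p_1,p_2$. The paper's proof is in fact shorter than yours, asserting simply that ``by Liouville's theorem, there exists a \emph{unique} global conformal transformation of $\Ss^n$ mapping the first three points in $\Lambda$ to the fixed triple $\{p_0,p_1,p_2\}$,'' and deriving injectivity from that uniqueness.

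You have, however, put your finger on a genuine imprecision in the paper's argument. For $n\geq 3$ the stabilizer in $\mathrm{Conf}(\Ss^n)$ of an ordered triple of distinct points is \emph{not} trivial: it is the copy of $O(n-1)$ fixing pointwise the round circle through the three points (the dimension count $\dim\mathrm{Conf}(\Ss^n)-3n=\frac{(n-1)(n-2)}{2}=\dim O(n-1)$ confirms this). So the paper's ``uniqueness'' claim, literally read, is false, and the map to $(\Ss^n\backslash\{p_0,p_1,p_2\})^{k-3}\backslash\mathrm{Diag}$ is not canonically defined on conformal equivalence classes. Your resolution --- interpret $\mathrm{Conf}_k$ as a slice for the $\mathrm{Conf}(\Ss^n)$-action rather than as the full quotient, retaining the residual $O(n-1)$ --- is the correct reading, and it is exactly consistent with how the paper uses the lemma in Theorem~\ref{nontriv_homotopy}, where the $SO(n-1)$ rotation of one summand is supposed to move the remaining punctures nontrivially inside $\mathrm{Conf}_k$. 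Keep your third paragraph; it is a real clarification that the paper glosses over.
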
 

\begin{proof} 
Let $\Lambda$ be a set of $k$ punctures associated to a Riemannian 
metric $g$. By Lioville's theorem, there 
exists a unique global conformal transformation of $\Ss^n$ mapping 
the first three points in $\Lambda$ to the fixed triple $\{ p_0, p_1, p_2\}$, 
so we can identify the conformal class of $g$ with a $(k-3)$-tuple of 
distinct points in $\Ss^n \backslash \{ p_0, p_1, p_2\}$. By the uniqueness 
statement in Liouville's theorem, if this process identifies two metrics with 
the same $(k-3)$-tuple of points they must be conformally equivalent. 
\end{proof}

The space $(\Ss^n \backslash \{ p_0, p_1, p_2\})^{k-3} 
\backslash \mathrm{Diag}$ is a smooth manifold of 
dimension $(k-3)n$. In the proof of Theorem \ref{nontriv_homotopy} 
below we describe exactly how it is not contactible by constructing a 
homotopy class that is not equivalent to a point. 

At this point we describe the family of metrics we create using 
end-to-end gluing more carefully. Let $M_1 = \Ss^n \backslash \{ p_0, 
\dots, p_{k-2}\}$ and let $g_1 = U_1^{\frac{4}{n-4}} \overset{\circ}{g}$ 
be a complete, conformally flat metric with constant $Q$-curvature that 
is unmarked nondegenerate. Next let $M_2 = \{ p_0, p_{k-1}, p_k\}$ and 
$g_2 = U_2^{\frac{4}{n-4}} \overset{\circ}{g}\in \mathcal{M}_3$. We assume 
the asymptotic necksizes of $g_1$ and $g_2$ at the common puncture $p_0$ 
agree, and that $g_2$ belongs to a one-parameter family of metrics in 
$\mathcal{M}_3$ that changes the asymptotic necksize at $p_0$ to first 
order. Then we glue end-to-end, obtaining a metric whose singular 
set is $\{ p_1, \dots, p_k\}$. However, within this construction we 
can rotate the summand $(M_2, g_2)$ by any element of $SO(n-1)$, 
{\it i.e.} by any rotation fixing the common axis of $g_1$ and $g_2$ 
at the gluing site, producing a family of metrics in $\mathcal{M}_k$. 
We denote the resulting family of metrics by $\mathfrak{m}\subset 
\mathcal{M}_k$. We also 
let $\overline{\mathfrak{m}} = \pi \circ \mathfrak{m}$, where 
$$\pi:\mathcal{M}_k \rightarrow \mathrm{Conf}_k$$
is the forgetful map sending a singular, constant $Q$-curvature 
metric to its conformal class. At this point it is convenient to compose with a 
family of conformal maps so that $p_k$, $p_1$ and $p_2$ all remain fixed. 
Abusing notation slightly, 
we still denote these families as $\mathfrak{m} \subset \mathcal{M}_k$ 
and $\overline{\mathfrak{m}} \subset \mathrm{Conf}_k$. Observe that, by Liouville's 
theorem, none of $p_j$ for $3 \leq j \leq k-1$ can be fixed in this family. 

\begin{theorem} \label{nontriv_homotopy} 
There does not exist a homotopy in $\mathcal{M}_k$ 
contracting the family $\mathfrak{m}$ constructed above to a point. 
\end{theorem}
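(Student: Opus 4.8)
The plan is to push the family $\mathfrak m$ forward under the forgetful map $\pi:\mathcal M_k\to\mathrm{Conf}_k$ of Lemma~\ref{conf_classes} and to show that the resulting family, together with its defining map $SO(n-1)\to\mathrm{Conf}_k$, is not null-homotopic. This suffices: if $H:\mathfrak m\times[0,1]\to\mathcal M_k$ were a homotopy contracting $\mathfrak m$ to a point, then $\pi\circ H$ would contract $\overline{\mathfrak m}:=\pi(\mathfrak m)$ to a point, and composing with the parametrization $SO(n-1)\to\mathfrak m$ would give a null-homotopy of $SO(n-1)\to\mathrm{Conf}_k$.

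First I would make the map $SO(n-1)\to\mathrm{Conf}_k$ explicit. In the end-to-end construction only the summand $(M_2,g_2)$ is rotated, by an element $\phi$ of $SO(n-1)$, i.e. a rotation of $\Ss^n$ fixing the gluing point $p_0$ and the common Delaunay axis; hence the punctures $p_1,\dots,p_{k-2}$ inherited from $M_1$ stay in place while $p_{k-1},p_k$ (inherited from $M_2$) are carried to $\phi\cdot p_{k-1},\phi\cdot p_k$. Normalizing the conformal class by the $\phi$-independent triple $\{p_0,p_1,p_2\}$ (using $p_0$, a regular point after gluing), we see that $\overline{\mathfrak m}$ is precisely the orbit of the base conformal class under the residual $SO(n-1)$-action on $\mathrm{Conf}_k$ coming from these rotations, and that the map is $\phi\mapsto[\,p_3,\dots,p_{k-2},\phi p_{k-1},\phi p_k\,]$. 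Because $p_{k-1},p_k$ do not lie on the Delaunay axis --- this is exactly where the availability of the $SO(n-1)$-rotation and the matched asymptotic axes are used --- this orbit is a closed submanifold $SO(n-1)/\Gamma\subset\mathrm{Conf}_k$, with $\Gamma$ the stabilizer of the moving pair (generically $SO(n-3)$, so $\overline{\mathfrak m}$ is the Stiefel manifold $V_2(\R^{n-1})$ of dimension $2n-5$).

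Second --- and this is the real content --- I would show that this orbit, equivalently the map $SO(n-1)\to\mathrm{Conf}_k$, is essential. The space $\mathrm{Conf}_k$ is $(n-2)$-connected, being assembled from copies of $\Ss^n$ minus finitely many points together with the removal of the (codimension-$n$) diagonal, each of which is $(n-2)$-connected; consequently no sphere of dimension $\le n-2$ can detect anything, so the low-dimensional topology of the orbit is invisible and the detection must take place in degree $\ge n-1$ (note $2n-5\ge n-1$ for $n\ge5$). The plan is therefore to pair the mod-$2$ fundamental class of $\overline{\mathfrak m}$ (or a class pushed forward from a suitable subsphere of $SO(n-1)$) against an explicit linking cohomology class on $\mathrm{Conf}_k$: the generalized Gauss class recording how the pair $\{p_{k-1},p_k\}$ winds, which is pulled back from the generator of $H^{n-1}$ of a two-point configuration space, and to evaluate this pairing directly in the model above, where it becomes a degree computation for the orbit map of the $SO(n-1)$-action about the Delaunay axis.

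The main obstacle is exactly this last verification: one must rule out that the orbit is \emph{accidentally} null-homotopic inside the highly connected $\mathrm{Conf}_k$, i.e. one must check that the chosen cohomology class restricts nontrivially to $\overline{\mathfrak m}$. This is delicate because raising the weight past the connectivity range forces one to work with near-top-degree classes rather than with the first nontrivial homotopy group, and because for the smallest values of $k$ the space $\mathrm{Conf}_k$ has small homotopy type (for $k=4$ it is essentially a cross-ratio space), so that a separate, more refined detection --- using the full fundamental class of the $SO(n-1)$-family and the unmarked nondegeneracy of both summands to guarantee the family is an embedded submanifold --- is needed there. Once the nontriviality of the pairing is established in all cases $k\ge4$, the theorem follows.
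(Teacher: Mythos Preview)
Your overall strategy matches the paper's: push $\mathfrak m$ forward under the forgetful map $\pi$ and argue that $\overline{\mathfrak m}=\pi\circ\mathfrak m$ is not null-homotopic in $\mathrm{Conf}_k$.

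There are two differences worth flagging. First, the paper normalizes by fixing three \emph{actual punctures} $\{p_k,p_1,p_2\}$---one from the rotated summand $M_2$ and two from $M_1$---whereas you fix $\{p_0,p_1,p_2\}$ with $p_0$ the gluing site. Since $p_0$ is a regular point of the glued metric, it is not recorded by the conformal class, so your normalization does not give a well-defined map into $\mathrm{Conf}_k$ as described in Lemma~\ref{conf_classes}; your tuple $[p_3,\dots,p_{k-2},\phi p_{k-1},\phi p_k]$ has $k-2$ entries rather than $k-3$. The paper's mixed choice (fixing one puncture from each side) is what forces the $M_1$-punctures $p_3,\dots,p_{k-1}$, rather than the $M_2$-pair, to move after renormalization, and it is their motion about the now-fixed $p_k$ that the paper invokes.

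Second, and more substantively: the paper disposes of the nontriviality in $\mathrm{Conf}_k$ with a short geometric sentence (contracting would force $p_j(t)$ and $p_k(t)$ to coincide), while you set up a more careful cohomological programme---computing the $(n-2)$-connectivity of $\mathrm{Conf}_k$, identifying the detection degree as $n-1$, and proposing a Gauss-type linking class pulled back from the two-point configuration space. Your connectivity observation is correct and in fact shows that the individual $(n-2)$-sphere traced by any single moving puncture is, by itself, contractible in $\mathrm{Conf}_k$; so a higher-degree invariant of the full $SO(n-1)$-family really is what is doing the work. The gap in your proposal is that you stop at the plan: you explicitly label the pairing computation ``the main obstacle'' and do not carry it out, and you note that $k=4$ would need a separate treatment. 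Without that verification---showing the chosen class restricts nontrivially to the orbit---your argument is a strategy rather than a proof.
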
 

\begin{proof} 
Suppose there exists such a homotopy 
$$F:[0,1] \times SO(n-1) \rightarrow \mathcal{M}_k, \quad 
F(0,A) = \mathfrak{m}(A), \quad F(1,A) = \mathrm{const}$$
and let $\overline{F} = \pi \circ F$.  
Then $\overline{F}$ must be a homotopy contracting $\overline{\mathfrak{m}}$
to a point. However, 
the action of $SO(n-1)$ on the puncture points moves both of  $p_j$  
over a small sphere surrounding $p_k$ for each $j =3,\dots, k-1$. 
If one could contract this family to 
a point in $\mathrm{Conf}_k$, then at some time in the homotopy the points 
$p_k(t) = \overline{F}(p_k,t)$ and $p_{j}(t) = \overline{F}(p_{j},t)$
would coincide. However, the resulting configuration now 
lies outside the set of allowable conformal classes on a $k$-punctured sphere, 
and so is impossible.
\end{proof}

\bibliography{end-to-end}
\bibliographystyle{abbrv}

\end{document}